\newtheorem{thm}{Theorem}[section]
\newtheorem{lemma}[thm]{Lemma}
\newtheorem{prop}[thm]{Proposition}
\newtheorem{cor}[thm]{Corollary}
\theoremstyle{definition}
\newtheorem{defn}[thm]{Definition}
\newtheorem{eg}[thm]{Example}
\theoremstyle{definition}
\newtheorem{rmk}[thm]{Remark}
\theoremstyle{definition}
\newtheorem{note}[thm]{Note}
\theoremstyle{definition}
\numberwithin{equation}{section}
\DeclareMathOperator{\Span}{span}
\DeclareMathOperator{\cpccloc}{\mathcal{CPCC}_{loc}\big(\mathcal{A}, C^{\ast}_{\mathcal{E}}(\mathcal{D})\big)}
  \DeclareMathOperator{\cloc}{\mathcal{C}_{loc}(E, C^{*}_{\mathcal{E, F}}(\mathcal{D, O}))}
\DeclareRobustCommand{\rrho}{{\mathpalette\irrho\relax}}
\newcommand{\irrho}[2]{\raisebox{\depth}{$#1\rho$}}
\DeclareRobustCommand{\rvarphi}{{\mathpalette\irvarphi\relax}}
\newcommand{\irvarphi}[2]{\raisebox{\depth}{$#1\varphi$}}
\title[Radon-Nikodym theorem]{Stinespring's Theorem for Unbounded Operator valued Local completely positive maps and Its Applications}
\author{B. V. Rajarama Bhat, Anindya Ghatak and P. Santhosh Kumar}
\address{Indian Statistical Institute Bangalore, Stat. Math. Unit, R.V. College Post, Bengaluru-560059, India.}
\email{bhat@isibang.ac.in}
\email{anindya.ghatak123@gmail.com}
\email{santhosh.uohmath@gmail.com}
\subjclass[2010]{46L07, 46L08, 47L40}
\keywords{locally $C^{\ast}$-algebras, quantized domain, local completely positive maps, local completely contractive maps, Stinespring's theorem, Randon-Nikodym's Theorem, Hilbert $C^*$-module}
\date{\today}
\begin{document}
\begin{abstract}
    Anar A. Dosiev in 2008 (See \cite{Dosiev}) obtained a   Stinespring's theorem for local completely positive maps (in short: local CP-maps) on locally $C^{\ast}$-algebras. In this article  a suitable notion of minimality for this construction has been identified so as to ensure uniqueness up to unitary equivalence for the associated representation.   Using this a Radon-Nikodym type theorem for local completely positive maps has been proved.

    Further,  a Stinespring's theorem for unbounded operator     valued local completely positive maps  on Hilbert modules over locally $C^{\ast}$-algebras (also called as local CP-inducing maps) has been presented. Following a construction of  M. Joi\c{t}a,  a Radon-Nikodym type theorem for local CP-inducing maps has been shown. In both cases the Radon-Nikodym derivative obtained is a positive contraction on some complex Hilbert space with an upward filtered family of reducing subspaces.
\end{abstract}
\maketitle
\section{Introduction}

Norm closed $*$-subalgebras of the algebra of all bounded operators are known as $C^*$-algebras and they have a very well developed theory. In an attempt to study algebras of unbounded operators, the notion of locally $C^{\ast}$-algebras was introduced by Atushi Inoue \cite{Inoue}.
It has been studied under various settings and under different names like multinormed $C^*$-algebra, $\sigma\text{-}C^*$-algebras  etc. W. Arveson used them to construct `tangent algebras' of $C^*$-algebras \cite{Arveson 88}. D. Voiculescu \cite{Voiculescu} introduced pro-$C^{\ast}$-algebras, as  essential objects to construct non-commutative analog of various classical Lie groups. Later N. C. Phillips studied them systematically as  projective limits of inverse families of  $C^*$-algebras in the context of $K$-theory of $C^*$-algebras \cite{Phillips-88}. This way the theory of locally $C^*$-algebras has shown its usefulness in many different contexts.

The notion of completely positive (CP) maps appeared naturally while studying maps between $C^*$-algebras and has its use in classification of $C^*$-algebras, quantum information theory, quantum probability theory and other fields.  The Stinespring's theorem \cite{Stinespring} is the basic structure theorem for completely positive maps.  There is plenty of literature on generalizing the notion of completely positive maps to maps between  locally $C^*$-algebras. See for example \cite{Dosiev12,Dosiev11,Dosiev,Joita-Main, Joita-corrigendum,Joita-tensorproduct,Moslehian et.al, Todorov et.al} and references therein. In  Theorem 2.1 of  \cite{Todorov et.al} we can find a generalization of Stinespring's theorem where the domain of the CP map is a dense subalgebra of a $C^*$-algebra and hence they are bounded elements. The unboundedness in that framework  is basically only  in the range and in the map itself.  A. Dosiev \cite{Dosiev} obtains a local CP version of Stinespring's theorem. We are considering his setting. Like the GNS representation theorem, one of the main features of Stinespring's theorem is a notion of minimality  which guarantees that minimal Stinespring's representation is unique  up to unitary equivalence. This is missing in Dosiev's work.
Here in  Theorem \ref{Theorem: UniquenessforDosiev}, we have proved such a result under suitable notion of minimality.
Looking at a pair of completely positive maps, one dominating the other, W. Arveson \cite{Arveson} obtained a non commutative analogue of Randon-Nikodym theorem. Here we generalize this result to local CP maps in Theorem \ref{Theorem: MainTheroem1}.

The study of Hilbert modules over locally $C^{\ast}$-algebras (in short: Hilbert locally $C^{\ast}$-modules) was
initiated by A. Mallios in \cite{Mallios} as a generalization of Hilbert $C^{\ast}$-modules. Later in \cite{Phillips-88}, N. C. Phillips
extended some basic properties of Hilbert $C^{\ast}$-modules to  Hilbert locally $C^{\ast}$-modules. Since then, there has been significant progress in the theory of Hilbert locally $C^{\ast}$-modules. In particular, Joita \cite{Joita-tensorproduct} defined tensor product of Hilbert locally $C^{\ast}$-modules and proved some results analogous to Hilbert $C^{\ast}$-modules. A. Gheondea \cite{Gheondea} constructed a notion of concrete Hilbert locally $C^{\ast}$-module by locally bounded operators
 (see \cite[Example 3.1 (2)]{Gheondea}), then showed that any Hilbert locally $C^{\ast}$-module is isomorphic to a concrete Hilbert locally $C^{\ast}$-module (see \cite[Theorem 3.2]{Gheondea}). We refer to the monograph of M. Joi\c{t}a \cite{Joita-monograph} for a survey of the theory of Hilbert locally $C^{\ast}$-modules.

We organize this article in six sections. In the second section, we recall some notations, basic definitions from theory of locally $C^{\ast}$-algebras and Hilbert modules over locally $C^{\ast}$-algebras. Also we state some existing results from the literature on local completely positive maps which are useful for later sections. In the third section, we introduce the notion of minimality for Stinespring's representation proved by Anar Dosiev \cite{Dosiev} and show the uniqueness of minimal Stinespring's representation. In the fourth section using the notion of minimality we prove a Radon-Nikodym type theorem for local CP-maps.

The fifth section contains a discussion of local CP-inducing maps on Hilbert locally $C^{\ast}$-modules and obtain Stinespring's representation for such maps. This is analogous to the result proved by Bhat, Ramesh and Sumesh in \cite{Bhat}. That result   was motivated by  Asadi \cite{Asadi}. Several authors have explored this idea and have extended the result in  different directions (See  \cite{Skeide1}, \cite{Skeide2},  \cite{Heo} and  \cite{Dey} ). In the final section, inspired by the work of  M. Joi\c{t}a ( \cite{Joita-Main}, \cite{Joita-corrigendum}) we define an equivalence relation on the class of all local CP-inducing maps and prove a Radon-Nikodym theorem for such maps.

\section{Notations and Preliminary results}

\subsection{Locally $C^{\ast}$-algebras}
\medskip
We recall some basic definitions in the context of locally $C^{\ast}$-algebras. We mostly follow the notation and terminology of \cite{Dosiev}.
 Let $\mathcal{A}$ be a unital $*$-algebra.
 \begin{enumerate}
     \item A seminorm $p$ on $\mathcal{A}$ is said to be \emph{sub-multiplicative}, if $p(1_{\mathcal{A}})=1$ and $p(ab)\leq p(a)p(b)$ for all $a, b\in \mathcal{A}$.
 \item A seminorm $p$ on $\mathcal{A}$ is said to be a \emph{$C^{\ast}$-seminorm}, if $p$ is sub-multiplicative and $p(a^{\ast}a) = p(a)^{2}$ for all $a\in \mathcal{A}$.
 \end{enumerate}
 Let $(\Lambda, \leq)$ be a directed poset and $\mathcal{P}:= \{p_{\alpha}: \alpha\in \Lambda\}$ be a family of $C^*$-seminorms defined on some $\ast$-algebra $\mathcal{A}$. Then $\mathcal{P}$ is called a \emph{upward filtered} family if
 \begin{equation*}
  p_{\alpha}(a)\leq p_{\beta}(a) \text{~~for all~~} a\in \mathcal{A}\; \text{and}\; \alpha\leq \beta.
 \end{equation*}
 \begin{defn}
 A \emph{locally $C^{\ast}$-algebra} $\mathcal{A}$ is a $\ast$-algebra which is complete with respect to the locally convex topology generated by a  upward filtered family $\{p_{\alpha}:\alpha\in \Lambda\}$ of $C^*$-seminorms defined on $\mathcal{A}$.
 \end{defn}

Throughout this article, we take $\mathcal{A}$ to be a locally $C^*$-algebra with a prescribed family  $\{p_{\alpha}: \alpha\in \Lambda\}$ of $C^*$-seminorms. We see that $\mathcal{A}$ is the \emph{projective limit} of an inverse family of $C^*$-algebras as follows: For each $\alpha\in \Lambda,$ let
 $\mathcal{I}_{\alpha}:= \{a\in \mathcal{A}: p_{\alpha}(a)=0\}.$ Clearly $\mathcal{I}_{\alpha}$ is a
 closed ideal in $\mathcal{A}$ and $\mathcal{A}_{\alpha}:= \mathcal{A}/\mathcal{I}_{\alpha}$ is a $C^*$-algebra with respect to the norm induced from $p_{\alpha}.$ For each $\alpha\leq \beta$, we define a map  $\pi_{\alpha\beta}: \mathcal{A}_{\beta}\to \mathcal{A}_{\alpha}$ by $\pi_{\alpha\beta}(a+ \mathcal{I}_{\beta})= a+ \mathcal{I}_{\alpha}$ for all $a\in \mathcal{A}$ and
 for each $\alpha$, we define a map $\pi_{\alpha}: \mathcal{A}\to \mathcal{A}_{\alpha}$ as the canonical quotient homomorphism. Then $\{\mathcal{A}_{\alpha}, \pi_{\alpha\beta}\}$ forms an inverse family of $C^*$-algebras because
 $\pi_{\alpha} =\pi_{\alpha\beta}\pi_{\beta}$ whenever
 $\alpha\leq \beta$.
 The projcetive limit
 $$\lim_{\xleftarrow[\alpha]{}} \mathcal{A}_{\alpha}:=\Big\{\{x_{\alpha}\}_{\alpha\in \Lambda}  \in \prod_{\alpha\in \Lambda}: \pi_{\alpha\beta}(x_{\beta})=
 x_{\alpha} \text{~~whenever~~} \alpha \leq \beta, \alpha, \beta\in \Lambda \Big\}$$ of the inverse family
 $\{\mathcal{A}_{\alpha}, \pi_{\alpha\beta}\}$
  is identified with $\mathcal{A},$ and the identification given by the map $a\to \{\pi_{\alpha}(a)\}_{\alpha\in \Lambda}.$
  For a systematic study of inverse limits of $C^*$-algebras, one can see \cite{Phillips-88}.

  An element $x\in\mathcal{A}$ is called
 \emph{self-adjoint} if $x^*=x$ and is called \emph{positive} if $x=y^*y$ for some $y\in \mathcal{A}.$ An important feature of local
 $C^*$-algebras is the notion of \emph{local self-adjoint} and \emph{local positive elements}. These elements occur naturally
 as $\mathcal{A}$ contains copies of each $\mathcal{A}_{\alpha}.$
\begin{defn}\cite{Dosiev} Let $a\in \mathcal{A.}$ Then $a$ is called
\begin{enumerate}
\item \emph{local self-adjoint} if $a=a^{*}+x,$ where $x\in \mathcal{A}$ such that $p_{\alpha}(x)=0$ for some $\alpha\in \Lambda$, and we call $a$ as \emph{$\alpha$-self-adjoint};
 \item \emph{local positive} if $a= b^*b+y$, where $b, y\in \mathcal{A}$ such that $p_{\alpha}(y)=0$ for some $\alpha\in \Lambda,$ we call $a$ as \emph{$\alpha$-positive}.
\end{enumerate}
\end{defn}
We write
$a\geq_{\alpha}0$ when $a$ is $\alpha$-positive. Moreover, we write $a\geq_{\alpha} b$ if $a-b\geq_{\alpha}0.$ It is easy to see that $a\in \mathcal{A}$ is \emph{local self-adjoint}
if $\pi_{\alpha}(a)$ is self-adjoint in $\mathcal{A}_{\alpha}$ for some $\alpha.$ Similarly, $a$ is locally positive if $\pi_{\beta}(a)$ is positive in
$\mathcal{A}_{\beta}$ for some $\beta.$ Also, notice that $a \geq_{\alpha} b$ if and only if $\pi_{\alpha}(a-b)\geq 0$ in $\mathcal{A}_{\alpha}.$
We write, $a=_{\alpha}0$ whenever $\pi_{\alpha}(a)=0$.
\subsection*{Quantized domain}
Throughout the paper, we write $\mathcal{H}$ as a complex Hilbert space, and $\mathcal{B(H)}$ as the set of all bounded operators on the Hilbert space $\mathcal{H}.$ In order to discuss representation of locally $C^*$-algebra, quantized domain is one of the important ingredient.

\begin{defn} Let $(\Omega, \leq )$ be a directed poset. A \emph{quantized domain} in a Hilbert space $\mathcal{H}$ is a triple $\{\mathcal{H}; \mathcal{E}; \mathcal{D}\}$, where
$\mathcal{E} = \{\mathcal{H}_{\ell}: \ell\in\Omega\}$ is an  upward filtered family of closed subspaces such that the union space $\mathcal{D}= \bigcup\limits_{\ell\in \Omega}
\mathcal{H}_{\ell}$ is dense in  $\mathcal{H}$.
\end{defn}

In short, we call $\mathcal{E}$ a \emph{quantized domain} in
$\mathcal{H}$ with its union space $\mathcal{D}$.  The quantized domain $\{\mathcal{H}; \mathcal{E}; \mathcal{D}\}$ is completely determined by the pair $(\mathcal{H}, \mathcal{E})$. However, for notational convenience we wish to retain $\mathcal{D}$ in the definition. Notice that this quantized family
$\mathcal{E}$ determines an upward filtered family $\{P_{\ell}: \ell\in \Lambda\}$ of projections in $\mathcal{B}(\mathcal{H}),$ where $P_{\ell}$ is a
projection from $\mathcal{H}$ onto $\mathcal{H}_{\ell}$.

Let $\mathcal{F}=\{ \mathcal{K}_{\ell}: \ell\in \Lambda\}$ be a quantized domain with
its union space $\mathcal{O}$ and $\mathcal{K}=\overline{\bigcup\limits_{\ell\in \Omega}\mathcal{K}_{\ell}}.$
Then $\mathcal{F}$ is  called a \emph{quantized subdomain} of $\mathcal{E}$, if $\mathcal{K}_{\ell}\subseteq \mathcal{H}_{\ell}$ for all $\ell\in \Omega.$
We express this collection of inclusions by simply writing $\mathcal{E}\subseteq \mathcal{F}$. Let $\mathcal{E}^{i}= \{\mathcal{H}_{\ell}^{(i)}: \ell\in\Omega\}$ be quantized
domain in a Hilbert space $\mathcal{H}^{i}$ with its union space $\mathcal{D}^{i}$ for $i=1,2$.
 Given a linear operator $V: \mathcal{D}^{1}\to \mathcal{H}^{2}$, we write $$V(\mathcal{E}_{1})\subseteq \mathcal{E}_{2} \text{~~if~~} V(\mathcal{H}_{\ell}^{(1)})\subseteq \mathcal{H}_{\ell}^{(2)} \text{~~for all~~} \ell\in \Omega.$$

The direct sum operation for quantized domains is defined as below. If $\mathcal{E}^{(i)}= \{\mathcal{H}_{\ell}^{(i)}: \ell\in\Omega\}$
be a quantized domain in the Hilbert space $\mathcal{H}^{(i)}$ with its union space $\mathcal{D}^{(i)}$ for $i=1,2\cdots n.$ Then $\bigoplus\limits_{i=1}^{n}\mathcal{E}^{(i)}=\Big\{\bigoplus\limits_{i=1}^{n} \mathcal{H}_{\ell}^{(i)}: \ell\in \Omega\Big\}$ is a quantized domain in the Hilbert space $\bigoplus\limits_{i=1}^{n}\mathcal{H}^{(i)}$ with the union space $\bigoplus\limits_{i=1}^{n}\mathcal{D}^{(i)}$. For simplicity, we denote the $n$-fold direct sum of copies of the quantized domain $\mathcal{E}$ by $\mathcal{E}^{n}.$


\subsection*{Unbounded operators on a Quantized domain}Before explaining, the algebra of unbounded operators on a quantized domain, let us quickly recall some basics of unbounded operator theory.

Let $\mathcal{H}$ and $\mathcal{K}$ be Hilbert spaces. A linear operator $T: \rm{dom}(T)\subseteq \mathcal{H}\to \mathcal{K}$ is said to be densely defined if $\rm{dom}(T)$ is a dense subspace of $\mathcal{H}$. The adjoint of $T$  is a linear map $T^{\bigstar} \colon \rm{dom}(T^{\bigstar})\subseteq \mathcal{K} \to \mathcal{H} $ with the domain given by
\begin{equation*}
    \rm{dom}(T^{\bigstar}):= \Big\{ \xi \in \mathcal{K}:\; \eta \mapsto \big\langle T\eta, \xi \big\rangle_{\mathcal{K}} \;\text{is continuous, for every}\; \eta \in \rm{dom}(T) \Big\},
\end{equation*}
and satisfying
\begin{equation*}
\langle T\eta, \xi \rangle_{\mathcal{K}}= \langle\eta, T^{\bigstar}\xi\rangle_{\mathcal{H}} ~~~\text{~for all~} \eta\in \rm{dom}(T), \text{~and~} \xi\in \rm{dom}(T^{\bigstar}).
\end{equation*}
The algebra of all \emph{noncommutative continuous functions} on a quantized
domain $\mathcal{E}$ is defined as
  \begin{align}\label{e1}
  \mathcal{C_{E}(D)}=\{T\in \mathcal{L(D)}: TP_{\ell} =P_{\ell} TP_{\ell}\in \mathcal{B(H)},~~ \text{for all}\; \ell\in \Omega \},\end{align}
where $\mathcal{L(D)}$ is the set of all linear operators on $\mathcal{D}$.
 If $T\in \mathcal{L(D)}$, then $T\in \mathcal{C_{E}(D)} $ if and only if
$T(\mathcal{H}_{\ell})\subseteq \mathcal{H}_{\ell}$ and $T\big|_{\mathcal{H}_{\ell}} \in \mathcal{B}(\mathcal{H}_{\ell})$, for all $\ell\in \Omega$. This implies that $\mathcal{C_{E}(D)}$ is an algebra.
 The $*$-algebra $\mathcal{C_{E}^*(D)}$ of $\mathcal{C_{E}(D)}$ is defined by
\begin{align}\label{e2}
 \mathcal{C_{E}^*(D)}=\{T\in\mathcal{C_{E}(D)}: P_{\ell}T\subseteq TP_{\ell},\; \text{for all}\;\ell\in \Omega\}.
\end{align}
 Notice that an element  $T\in \mathcal{C_{E}(D)}$ belongs to $\mathcal{C}_{\mathcal{E}}^*(\mathcal{D})$ if and only if
 $T(\mathcal{H}_{\ell}^{\perp}\cap \mathcal{D})\subseteq \mathcal{H}_{\ell}^{\perp}\cap \mathcal{D}$ for all $\ell\in \Omega.$
 It is shown in \cite[Proposition 3.1]{Dosiev} that if $T\in \mathcal{C_{E}^*(D)},$ then $\mathcal{D}\subseteq \rm{dom}(T^{\bigstar})$.
 Moreover, $T^{\bigstar}(\mathcal{H}_{\ell})\subseteq  \mathcal{H}_{\ell}$ for all $\ell \in \Omega.$ Now,
 let $$T^{*}= T^{\bigstar}|_{\mathcal{D}}.$$ Then it is easy to see that $T^*\in \mathcal{C_{E}^*(D)}.$ Thus $\mathcal{C_{E}^*(D)}$ is a unital $*$-algebra.
 Let us define seminorm $\Vert \cdot\Vert_{\ell}$ by
 \begin{align}\label{Equation: concrete Cstar-seminorm}
 \Vert T\Vert_{\ell}= \Vert T|_{\mathcal{H}_{\ell}}\Vert ~~~ \text{~for all~} T\in \mathcal{C_{E}^*(D)}.
 \end{align}
 Now, $\{\Vert\cdot\Vert_{\ell}: \ell\in \Omega \}$ is an upward filtered family of $C^*$-seminorms of $\mathcal{C_{E}^*(D)}$, thus $\mathcal{C_{E}^*(D)}$
 is a locally $C^*$-algebra determined by  the family $\{\Vert \cdot\Vert_{\ell }: \ell \in \Omega \}.$ If $T\in \mathcal{C_{E}^*(D)}$, then
 $T\geq_{\ell } 0$ if $T|_{\mathcal{H}_{\ell }}$ is positive operator in $\mathcal{B}(\mathcal{H}_{\ell }).$ Similarly, $T =_{\ell }0$ if $T|_{\mathcal{H}_{\ell }}=0$ in $\mathcal{B}(\mathcal{H}_{\ell }).$

 Let $\mathcal{A}$ and $\mathcal{B}$ be two locally $C^*$-algebras determined by two families of $C^*$-seminorms $\{p_{\alpha}: \alpha\in \Lambda\}$ and
 $\{q_{\ell}: \ell\in \Omega\}$ respectively. A linear map $\varphi: \mathcal{A}\to \mathcal{B}$ is called
 \begin{enumerate}
     \item \emph{local bounded} if for each $\ell\in \Omega$, there exist $\alpha \in \Lambda$ and $C_{\ell\alpha}>0$ such that $$q_{\ell}(\varphi(a))\leq C_{\ell\alpha} p_{\alpha}(a) ~~\text{~for all~} a\in \mathcal{A}.$$
     Note that $\varphi$ is continuous if and only if $\varphi$ is local bounded.
 \item \emph{local contractive}  if for each $\ell\in \Omega$, there exists $\alpha \in \Lambda$ such that $$q_{\ell}(\varphi(a))\leq p_{\alpha}(a) ~~\text{~for all~} a\in \mathcal{A};$$

 \item \emph{local positive} if for each $\ell\in \Omega$, there exists $\alpha\in \Lambda$ such that
 $$\varphi(a)\geq_{\ell} 0  \emph{~whenever~} a\geq_{\alpha}0.$$
  \end{enumerate}

It is very important to note that every locally $C^*$-algebra has a representation in $\mathcal{C_{E}^*(D)}$, for some $\{\mathcal{H}; \mathcal{E}; \mathcal{D}\}$. It can be treated as an
unbounded analog of Gelfand-Naimark theorem.
\begin{thm}\cite[Theorem 7.2]{Dosiev} Let $\mathcal{A}$ be a locally $C^*$-algebra, then there exists a quantized domain $\{\mathcal{H};\mathcal{E}; \mathcal{D}\}$ with a local isometrical $*$-homomorphism $\pi: \mathcal{A}\to \mathcal{C_{E}^*(D)}$.
\end{thm}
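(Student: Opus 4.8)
The plan is to exploit the realization of $\mathcal{A}$ as the projective limit $\varprojlim_{\alpha} \mathcal{A}_{\alpha}$ of the $C^{*}$-algebras $\mathcal{A}_{\alpha} = \mathcal{A}/\mathcal{I}_{\alpha}$ described above, to faithfully represent each $C^{*}$-quotient by the classical Gelfand--Naimark theorem, and then to assemble these representations on a Hilbert space direct sum whose summands, grouped along the down-sets of $\Lambda$, furnish the quantized domain. The connecting maps $\pi_{\alpha\beta}$ will not be used directly; only the individual quotients $\pi_{\alpha}$ and the upward filtered ordering of the seminorms enter.

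First I would fix, for each $\alpha \in \Lambda$, a faithful representation $\rho_{\alpha}\colon \mathcal{A}_{\alpha}\to \mathcal{B}(\mathcal{K}_{\alpha})$ on some Hilbert space $\mathcal{K}_{\alpha}$, which exists since $\mathcal{A}_{\alpha}$ is a $C^{*}$-algebra. Composing with the canonical quotient $\pi_{\alpha}\colon \mathcal{A}\to \mathcal{A}_{\alpha}$ yields a $*$-homomorphism $\varphi_{\alpha} := \rho_{\alpha}\circ \pi_{\alpha}\colon \mathcal{A}\to \mathcal{B}(\mathcal{K}_{\alpha})$ with $\Vert \varphi_{\alpha}(a)\Vert = \Vert \pi_{\alpha}(a)\Vert_{\mathcal{A}_{\alpha}} = p_{\alpha}(a)$ for all $a$, because $\rho_{\alpha}$ is isometric and the norm on $\mathcal{A}_{\alpha}$ is the one induced by $p_{\alpha}$. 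Next I construct the quantized domain: set $\mathcal{H} := \bigoplus_{\gamma\in\Lambda}\mathcal{K}_{\gamma}$ (Hilbert space direct sum) and, for each $\alpha\in\Lambda$, put $\mathcal{H}_{\alpha} := \bigoplus_{\gamma\leq\alpha}\mathcal{K}_{\gamma}$, a closed subspace of $\mathcal{H}$. Since $\alpha\leq\beta$ forces $\{\gamma:\gamma\leq\alpha\}\subseteq\{\gamma:\gamma\leq\beta\}$, the family $\mathcal{E} := \{\mathcal{H}_{\alpha}:\alpha\in\Lambda\}$ is upward filtered; taking $\mathcal{D} := \bigcup_{\alpha}\mathcal{H}_{\alpha}$, density follows because the finitely supported vectors are dense in $\mathcal{H}$ and, by directedness of $\Lambda$, each such vector lies in some $\mathcal{H}_{\alpha}$. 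Thus $\{\mathcal{H};\mathcal{E};\mathcal{D}\}$ is a quantized domain with $\Omega = \Lambda$.

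Finally, I define $\pi(a) := \bigl(\bigoplus_{\gamma}\varphi_{\gamma}(a)\bigr)\big|_{\mathcal{D}}$. For $\xi\in\mathcal{H}_{\alpha}$ the vector $\pi(a)\xi = \bigoplus_{\gamma\leq\alpha}\varphi_{\gamma}(a)\xi_{\gamma}$ again lies in $\mathcal{H}_{\alpha}$, so $\pi(a)\in\mathcal{L(D)}$ preserves each $\mathcal{H}_{\alpha}$, and its restriction is bounded with $\Vert \pi(a)|_{\mathcal{H}_{\alpha}}\Vert = \sup_{\gamma\leq\alpha}\Vert \varphi_{\gamma}(a)\Vert = \sup_{\gamma\leq\alpha}p_{\gamma}(a) = p_{\alpha}(a)$, the supremum being attained at $\gamma=\alpha$ since $\{p_{\gamma}\}$ is upward filtered. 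Hence $\pi(a)\in \mathcal{C_{E}(D)}$ and $\Vert \pi(a)\Vert_{\alpha} = p_{\alpha}(a)$, which gives the local isometry (and injectivity, as the seminorms separate points). Because the direct-sum operator is diagonal with respect to $\bigoplus_{\gamma}\mathcal{K}_{\gamma}$, it also leaves $\mathcal{H}_{\alpha}^{\perp} = \bigoplus_{\gamma\not\leq\alpha}\mathcal{K}_{\gamma}$ invariant, which is exactly the condition $P_{\alpha}\pi(a)\subseteq \pi(a)P_{\alpha}$ of \eqref{e2}; thus $\pi(a)\in \mathcal{C_{E}^*(D)}$. That $\pi$ is a $*$-homomorphism follows componentwise from the same properties of each $\varphi_{\gamma}$, using that $\pi(a)^{*}=\pi(a)^{\bigstar}|_{\mathcal{D}}$ agrees with $\pi(a^{*})$ through the inner-product pairing over $\mathcal{D}$, the inclusion $\mathcal{D}\subseteq \mathrm{dom}(\pi(a)^{\bigstar})$ being guaranteed by \cite[Proposition 3.1]{Dosiev}.

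I expect the main obstacle to be the verification that $\pi(a)$ genuinely lands in $\mathcal{C_{E}^*(D)}$ rather than merely in $\mathcal{C_{E}(D)}$ --- that is, the orthogonal-complement invariance encoded by \eqref{e2} --- together with the bookkeeping needed to see that the operator norm on each $\mathcal{H}_{\alpha}$ collapses to exactly $p_{\alpha}$ by upward filteredness. Once the diagonal structure of the direct-sum construction is in place, both reduce to direct checks, and the adjoint computations are routine.
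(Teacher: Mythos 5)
Your construction is correct: faithfully representing each quotient $\mathcal{A}_{\alpha}=\mathcal{A}/\mathcal{I}_{\alpha}$, forming $\mathcal{H}=\bigoplus_{\gamma\in\Lambda}\mathcal{K}_{\gamma}$ with $\mathcal{H}_{\alpha}=\bigoplus_{\gamma\leq\alpha}\mathcal{K}_{\gamma}$, and letting $\pi$ act block-diagonally does yield a quantized domain and a local isometric $*$-homomorphism, since the diagonal structure gives both the invariance of $\mathcal{H}_{\alpha}^{\perp}\cap\mathcal{D}$ (the membership criterion for $\mathcal{C_{E}^*(D)}$) and the equality $\Vert\pi(a)\Vert_{\alpha}=\sup_{\gamma\leq\alpha}p_{\gamma}(a)=p_{\alpha}(a)$ by upward filteredness. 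Note that the paper itself offers no proof of this statement --- it is quoted from Dosiev --- and your argument is essentially the standard construction underlying Dosiev's Theorem 7.2, so there is no substantive divergence to report.
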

\begin{rmk} The above Gelfand-Naimark type theorem appeared in the work of Anar Dosiev in 2008 \cite{Dosiev},  in which he showed that each element of a
 locally $C^*$-algebra can be identified as an unbounded operator on a quantized domain.  Prior to that, A. Inoue showed the representation of  locally
 $C^{*}$-algebra in a little different approach \cite{Inoue}. Moreover, it was established that every locally $C^*$-algebra can be represented as a $*$-algebra of continuous linear operators on a \emph{locally Hilbert space}. Note that a locally Hilbert space is the inductive limit of directed family of Hilbert spaces, in which the topology obtained from inductive limit is a Hausdorff topology \cite{Gheondea}.
 \end{rmk}
 \begin{rmk} Representations of locally $C^*$-algebras have been studied either looking at quantized domains or considering locally Hilbert spaces. We  follow Dosiev's approach of studying representations on quantized domains.
 \end{rmk}

 Effros \cite{Eff-Web-96} initiated a study of  unbounded analog of operator space called  \emph{local operator space}, which is the locally convex version of operator space. It is a projective limit of inverse system of operator spaces. He introduced a class of morphisms on local operator spaces known as \emph{local completely bounded maps} (in short: we call it local CB-maps). On the other hand  Dosiev \cite{Dosiev}  extended the ideas of local operator space into local operator systems by a suitable notion of elements like local Hermitian, local positive elements etc. Moreover he introduced the class of morphisms on local operator system known as \emph{local completely positive maps} (in short: local CP-maps).


\subsection*{Local CB-maps and local CP-maps}
The definition local CP-map and local CB-map is defined between two local operator spaces and two local operator systems. This article is limited to study the structure theory of local CP-maps between two locally $C^*$-algebras and its applications.

Let $\mathcal{E}=\{\mathcal{H}_{\ell}: \ell\in \Omega\}$ be a \emph{quantized domain} in $\mathcal{H}$ with its union space $\mathcal{D}$. From earlier discussion, we know that $\mathcal{E}^{n}= \{\mathcal{H}^n_{\ell}: \ell\in \Omega\}$ is a \emph{quantized domain} in $\mathcal{H}^{n} := \mathcal{H}\oplus \cdots \oplus \mathcal{H}$ ($n$-copies of $\mathcal{H}$) with its union space
$\mathcal{D}^{n}$. Notice that $C^*_{\mathcal{E}^{n}}(\mathcal{D}^n)$ is a locally $C^*$-algebra. Moreover, $M_{n}(C^*_{\mathcal{E}}(\mathcal{D}))$ is isomorphic to
$C^*_{\mathcal{E}^{n}}(\mathcal{D}^n)$ via the following map
\begin{equation}
    [T_{i,j}]
    \left(\begin{bmatrix}
    \xi_{1}\\
    \vdots\\
    \xi_{n}
    \end{bmatrix}\right)
    = \begin{bmatrix}
    \sum\limits_{j=1}^{n}T_{1,j}\xi_{j}\\
    \vdots\\
    \sum\limits_{j=1}^{n}T_{n,j}\xi_{j}
    \end{bmatrix}.
\end{equation}
Let $[T_{i,j}]\in M_{n}(C^*_{\mathcal{E}}(\mathcal{D}))$. It is important to notice that
\begin{enumerate}
    \item $[T_{i,j}]\geq_{\ell} 0 \in M_{n}(C^*_{\mathcal{E}}(\mathcal{D}))$ if $[T_{i,j}]\big|_{\mathcal{H}_{\ell}^{n}}\geq 0$ in $\mathcal{B}(\mathcal{H}_{\ell}^{n});$ and
    \item $[T_{i,j}]=_{\ell}0$ if $[T_{i,j}]\big|_{\mathcal{H}_{\ell}^{n}}= 0$ in $\mathcal{B}(\mathcal{H}_{\ell}^{n}).$
\end{enumerate}

Let $\mathcal{A}$ and $\mathcal{B}$ be two locally $C^*$-algebras with associated families of $C^{\ast}$-seminorms $\{p_{\alpha}: \alpha\in \Lambda\}$ and $\{q_{\beta}: \beta\in \Omega\}$ respectively. Note that $M_{n}(\mathcal{A})$ and $ M_{n}(\mathcal{B})$ are locally $C^*$-algebras, where the associated families of $C^{\ast}$-seminorms are denoted by
$\{p^{n}_{\alpha}:\alpha\in \Lambda\}$ and $\{q^{n}_{\ell}: \ell\in \Omega\}$ respectively. Let $\varphi: \mathcal{A}\to \mathcal{B}$ be a linear map.  For each $n\in \mathbb{N}$, the \emph{$n$-amplification} of $\varphi$ is the map $\varphi^{(n)}: M_{n}(\mathcal{A}) \to M_{n}(\mathcal{B})$ defined by
$$
\varphi^{(n)}([a_{i,j}])= [\varphi(a_{i,j})] \text{~for all~} [a_{i,j}] \in M_{n}(\mathcal{A}).
$$
\begin{defn}[Local CB-map]Let $\varphi: \mathcal{A}\to \mathcal{B}$ be a linear map. Then $\varphi$ is called
\begin{enumerate}
\item \emph{local completely bounded} if for each $\ell \in \Omega$, there exist $\alpha \in \Lambda$ and $C_{\ell \alpha}>0$ such that $$q^{n}_{\ell}([\varphi(a_{i,j})])\leq C_{\ell \alpha} p_{\alpha}^{n}([a_{i,j}]),\; \text{for every}\; n\in \mathbb{N} ;$$
\item \emph{local completely contractive} if for each $\ell \in \Omega$, there exists $\alpha \in \Lambda$ such that
$$q^{n}_{\ell}[\varphi(a_{i,j})]\leq  p_{\alpha}^{n}([a_{i,j}]),\;\text{for every}\; n\in \mathbb{N}.$$
\end{enumerate}
\end{defn}
\begin{defn}[Local CP-map] Let $\varphi: \mathcal{A}\to \mathcal{B}$ be a linear map. Then $\varphi$ is called \emph{local completely positive}
if for each $\ell\in \Omega$, there exists $\alpha\in \Lambda$ such that
 $$\varphi^{(n)}([a_{ij}])\geq_{\ell} 0  \emph{~whenever~} [a_{ij}]\geq_{\alpha}0,$$
for every $n.$
\end{defn}

Now, we fix some notation for these classes of maps:
\begin{enumerate}
 \item $ \mathcal{CP}_{\text{loc}}\big(\mathcal{A}, C^{\ast}_{\mathcal{E}}(\mathcal{D})\big):$ the class of all local completely positive
 maps from $\mathcal{A}$ to $C^{\ast}_{\mathcal{E}}(\mathcal{D})$.
\item $ \mathcal{CC}_{\text{loc}}\big(\mathcal{A}, C^{\ast}_{\mathcal{E}}(\mathcal{D})\big):$ the class of all local completely contractive
maps from $\mathcal{A}$ to $C^{\ast}_{\mathcal{E}}(\mathcal{D})$.
\item $\cpccloc:$ the class of all local completely positive and local completely contractive maps from $\mathcal{A}$ to
$C^{\ast}_{\mathcal{E}}(\mathcal{D}).$
 \end{enumerate}

Our main interest is in    $\cpccloc$, the class of  local completely positive and locally completely contractive maps. Here is an example of such a map.

\begin{eg}
Let $\mathcal{H}$ be a complex, separable Hilbert space with a complete orthonormal basis: $\{e_{n}:\; n\in \mathbb{N}\}.$  Let $\{k_n\}_{n\in {\mathbb N}}$ be a monotonically
increasing sequence of natural numbers: $1\leq k_1<k_2<\cdots . $ For every $n \in \mathbb{N}$, we define $\mathcal{H}_{n} = \Span\{e_{1}, e_{2}, \hdots, e_{k_n}\}$, a closed subspace of $\mathcal{H}$. It is clear that $\{\mathcal{H}; \mathcal{E}; \mathcal{D}\}$ is a quantized domain in $\mathcal{H}$, where  $\mathcal{E} = \{\mathcal{H}_{n}:\; n\in \mathbb{N}\}$ is an upward filtered family and the union space $\mathcal{D} = \bigcup\limits_{n\in \mathbb{N}}\mathcal{H}_{n}$ is dense in $\mathcal{H}$.  The elements of $C^{\ast}_{\mathcal{E}}(\mathcal{D})$ are `block diagonal' operators which are possibly unbounded.  Let us consider an operator  $A \colon  \mathcal {H}\to \mathcal {H} $ which is positive and contractive.
Now we define the  \emph{Schur product} map $\psi _A$ (entry wise product map) initially on
${\mathcal B}({\mathcal H})$ by
$$ \psi _A([t_{ij}])= [a_{ij}t_{ij}]~~\mbox{for}~~ T=[t_{ij}]\in {\mathcal B}({\mathcal H})$$
where the operators are written in the given basis $\{e_n:n\in {\mathbb N}\}.$
Note that the Schur product $\psi _A(T)$ can be written using the isometry  $V \colon \mathcal{H} \to \mathcal{H} \otimes \mathcal{H},$ defined by $V(e_{n}) = e_{n} \otimes e_{n}, ~~\forall n \in \mathbb{N}$,  as
\begin{equation*}
    \psi _A( T)  = V^{\ast}(A\otimes T)V,
\end{equation*}
 Here the adjoint of $V$ is computed as,
\begin{equation*}
    V^{\ast}(y\otimes z) = \sum\limits_{i=1}^{\infty} \big\langle e_{i}\otimes e_{i},\; y\otimes z\big\rangle e_{i}, \; \text{for any}\; y,z \in \mathcal{H}.
\end{equation*}
From this it is easy to see that $\psi _A$ is a completely positive completely contractive map. Now we
extend the definition of Schur product to $C^{\ast}_{\mathcal{E}}(\mathcal{D}).$
It is clear that if $T$ leaves ${\mathcal H}_n$ invariant then so does $\psi _A(T)$.
 Define  $\varphi_{A} \colon C^{\ast}_{\mathcal{E}}(\mathcal{D}) \to C^{\ast}_{\mathcal{E}}(\mathcal{D})$ by
\begin{equation*}
    \varphi_{A}(T)|_{{\mathcal H}_n} = \psi _A (P_{{\mathcal H}_n}T|_{{\mathcal H}_n}), \; \text{for all}\; T \in C^{\ast}_{\mathcal{E}}(\mathcal{D}), n \in \mathbb{N}.
\end{equation*}
Equivalently $\varphi _{A}(T)=[a_{ij}t_{ij}]$ with respect to the given basis, thinking of $T$ as a block diagonal operator.
It is now easy to see that $\varphi _A$ is well-defined, local completely positive and local completely contractive.
\end{eg}

There is growing literature on the notion of completely positive maps on locally $C^{\ast}$-algebras (see for example \cite{Joita-Stinespring, Joita-StrictCP, Moslehian et.al, Maliev et.al} and references therein). Anar Dosiev \cite{Dosiev} studied local completely positive maps on locally $C^{\ast}$-algebras, using the notion of  local positive elements. He established a version of Stinespring's theorem for such maps. We recall the result here.

\begin{thm}\cite[Theorem 5.1]{Dosiev} \label{Theorem: DosievStinespring}
	Let $\varphi \in \cpccloc$. Then there exists a quantized domain $\big\{\mathcal{H}^{\varphi};\mathcal{E}^{\varphi}; \mathcal{D}^{\varphi} \big\}$, where  $\mathcal{E}^{\varphi}= \{\mathcal{H}^{\varphi}_{\ell}: \ell \in \Omega\}$ is an upward filtered family of closed subspaces of $\mathcal{H}^{\varphi}$,  a contraction $V_{\varphi}\colon \mathcal{H} \to \mathcal{H}^{\varphi}$ and a unital local contractive $\ast$-homomorphism $\pi_{\varphi} \colon \mathcal{A} \to C^{\ast}_{\mathcal{E}^{\varphi}}(\mathcal{D}^{\varphi})$ such that
	\begin{equation*}
	V_{\varphi}(\mathcal{E}) \subseteq \mathcal{E}^{\varphi} \;\; \text{and}\;\; \varphi(a) \subseteq V_{\varphi}^{\ast}\pi_{\varphi}(a)V_{\varphi},
	\end{equation*}
	for all $a \in \mathcal{A}$. Moreover, if $\varphi(1_{\mathcal{A}}) = I_{\mathcal{D}}$, then $V_{\varphi}$ is an isometry.
\end{thm}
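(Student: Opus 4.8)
\section*{Proof proposal}

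The plan is to carry out the unbounded, ``local'' analogue of the Stinespring--KSGNS dilation on the algebraic tensor product $\mathcal{A}\odot\mathcal{D}$. First I would equip $\mathcal{A}\odot\mathcal{D}$ with the sesquilinear form determined on elementary tensors by
$$\Big\langle \textstyle\sum_{i} a_{i}\otimes\xi_{i},\ \sum_{j} b_{j}\otimes\eta_{j}\Big\rangle_{\varphi} = \sum_{i,j}\big\langle \varphi(b_{j}^{\ast}a_{i})\xi_{i},\ \eta_{j}\big\rangle_{\mathcal{H}}.$$
The positivity of this form is precisely where local complete positivity enters. Given finitely many vectors $\xi_{1},\dots,\xi_{n}\in\mathcal{D}$, upward filteredness of $\mathcal{E}$ places them all in a common $\mathcal{H}_{\ell}$; choosing the index $\alpha=\alpha(\ell)\in\Lambda$ supplied by local complete positivity and using that $[a_{j}^{\ast}a_{i}]$ is genuinely positive (hence $\alpha$-positive) in $M_{n}(\mathcal{A})$, I obtain $\varphi^{(n)}([a_{j}^{\ast}a_{i}])\geq_{\ell}0$, so restricting this positive matrix to $\mathcal{H}_{\ell}^{n}$ makes $\sum_{i,j}\langle\varphi(a_{j}^{\ast}a_{i})\xi_{i},\xi_{j}\rangle\geq 0$. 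Quotienting by the null space of the form and completing then produces the Hilbert space $\mathcal{H}^{\varphi}$.

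The quantized domain I would build level by level: set $\mathcal{H}^{\varphi}_{\ell}:=\overline{\Span}\{a\otimes\xi : a\in\mathcal{A},\ \xi\in\mathcal{H}_{\ell}\}$ (images in $\mathcal{H}^{\varphi}$) and $\mathcal{D}^{\varphi}:=\bigcup_{\ell}\mathcal{H}^{\varphi}_{\ell}$. Because $\mathcal{E}$ is upward filtered, so is $\mathcal{E}^{\varphi}=\{\mathcal{H}^{\varphi}_{\ell}\}$, and density of $\mathcal{D}=\bigcup_{\ell}\mathcal{H}_{\ell}$ in $\mathcal{H}$ together with density of the image of $\mathcal{A}\odot\mathcal{D}$ in $\mathcal{H}^{\varphi}$ yields density of $\mathcal{D}^{\varphi}$, so that $\{\mathcal{H}^{\varphi};\mathcal{E}^{\varphi};\mathcal{D}^{\varphi}\}$ is a quantized domain.

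Next I would define $\pi_{\varphi}(a)$ by left multiplication, $\pi_{\varphi}(a)(b\otimes\xi)=ab\otimes\xi$, and $V_{\varphi}\xi=1_{\mathcal{A}}\otimes\xi$. The decisive estimate is the matrix inequality $[b_{j}^{\ast}a^{\ast}a b_{i}]\leq_{\alpha} p_{\alpha}(a)^{2}[b_{j}^{\ast}b_{i}]$ in $M_{n}(\mathcal{A})$, which follows from $\pi_{\alpha}(a^{\ast}a)\leq p_{\alpha}(a)^{2}1$ in the $C^{\ast}$-algebra $\mathcal{A}_{\alpha}$ and the fact that conjugating a positive element by a row preserves positivity. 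Applying $\varphi^{(n)}$ and restricting to $\mathcal{H}_{\ell}^{n}$ gives $\|\pi_{\varphi}(a)u\|_{\varphi}\leq p_{\alpha}(a)\|u\|_{\varphi}$ for $u\in\mathcal{A}\odot\mathcal{H}_{\ell}$; this simultaneously shows that $\pi_{\varphi}(a)$ is well defined on the quotient, leaves each $\mathcal{H}^{\varphi}_{\ell}$ invariant with $\|\pi_{\varphi}(a)\|_{\ell}\leq p_{\alpha}(a)$ (local contractivity), and is multiplicative and unital by inspection. The identity $\langle\pi_{\varphi}(a)(b\otimes\xi),c\otimes\eta\rangle_{\varphi}=\langle b\otimes\xi,\pi_{\varphi}(a^{\ast})(c\otimes\eta)\rangle_{\varphi}$ gives $\pi_{\varphi}(a^{\ast})=\pi_{\varphi}(a)^{\ast}$, placing $\pi_{\varphi}(a)$ in $C^{\ast}_{\mathcal{E}^{\varphi}}(\mathcal{D}^{\varphi})$. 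For $V_{\varphi}$, contractivity follows from $\langle\varphi(1_{\mathcal{A}})\xi,\xi\rangle\leq\|\xi\|^{2}$ (as $\varphi(1_{\mathcal{A}})|_{\mathcal{H}_{\ell}}$ is a positive contraction), the inclusion $V_{\varphi}(\mathcal{E})\subseteq\mathcal{E}^{\varphi}$ is immediate since $\mathcal{H}_{\ell}\subseteq\mathcal{D}$, and the computation $\langle V_{\varphi}^{\ast}\pi_{\varphi}(a)V_{\varphi}\xi,\eta\rangle=\langle a\otimes\xi,1_{\mathcal{A}}\otimes\eta\rangle_{\varphi}=\langle\varphi(a)\xi,\eta\rangle$ for $\xi,\eta\in\mathcal{D}$ yields the operator containment $\varphi(a)\subseteq V_{\varphi}^{\ast}\pi_{\varphi}(a)V_{\varphi}$; when $\varphi(1_{\mathcal{A}})=I_{\mathcal{D}}$ the same inner-product computation forces $V_{\varphi}$ to be isometric.

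I expect the main obstacle, beyond the classical bounded Stinespring theorem, to be the bookkeeping forced by the two distinct directed sets --- $\Lambda$ indexing the seminorms of $\mathcal{A}$ and $\Omega$ indexing the quantized domain --- together with the choice function $\ell\mapsto\alpha(\ell)$ arising from local complete positivity and contractivity. One must verify that these selections are compatible with the upward-filtered structure so that $\mathcal{E}^{\varphi}$ is genuinely a quantized domain and $\pi_{\varphi}$ genuinely lands in $C^{\ast}_{\mathcal{E}^{\varphi}}(\mathcal{D}^{\varphi})$. A second source of care is that $\varphi(a)$ and $\pi_{\varphi}(a)$ are unbounded: one must track domains precisely, justify that $V_{\varphi}$ maps $\mathcal{D}$ into $\mathcal{D}^{\varphi}$ and that $\pi_{\varphi}(a)$ preserves $\mathcal{D}^{\varphi}$, and interpret the conclusion as the containment ``$\subseteq$'' rather than an equality, checking the defining invariance and adjoint conditions for membership in $C^{\ast}_{\mathcal{E}^{\varphi}}(\mathcal{D}^{\varphi})$ instead of relying on bounded-operator manipulations.
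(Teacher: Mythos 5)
The paper does not prove this theorem; it is imported verbatim from Dosiev \cite{Dosiev} (Theorem 5.1), so there is no in-paper argument to compare against, and your proposal is in fact correct and follows essentially the same route as Dosiev's original proof: the GNS/KSGNS-type dilation on $\mathcal{A}\odot\mathcal{D}$, with positivity of the form obtained by placing finitely many vectors in a common $\mathcal{H}_{\ell}$ and invoking local complete positivity on the Gram matrix, the quantized domain built level-by-level as the closures of the images of $\mathcal{A}\odot\mathcal{H}_{\ell}$, and local contractivity of $\pi_{\varphi}$ extracted from the matrix inequality $[b_{i}^{\ast}a^{\ast}ab_{j}]\leq_{\alpha}p_{\alpha}(a)^{2}[b_{i}^{\ast}b_{j}]$ pushed through $\varphi^{(n)}$. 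Your own flagged caveats (the choice function $\ell\mapsto\alpha(\ell)$, domain tracking, and verifying the adjoint-invariance condition for membership in $C^{\ast}_{\mathcal{E}^{\varphi}}(\mathcal{D}^{\varphi})$) are exactly the points where the local setting departs from the classical bounded case, and your sketch handles them correctly.
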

\begin{rmk}
A local completely positive map need not be local completely bounded map. In order to establish a Stinespring type theorem in the category of local $C^{\ast}$-algebras, it is necessary to consider the class of local completely positive maps which are also local completely contractive (see \cite[Theorem 5.1]{Dosiev}).
\end{rmk}

\subsection{Hilbert modules over locally $C^{\ast}$-algebras}
We review some definitions and results from the literature on Hilbert modules over locally $C^{\ast}$-algebras which are needed
in this article.
\begin{defn}\cite{Gheondea}\label{Definition: innerproductAmodule}
Let $\mathcal{A}$ be a locally $C^{\ast}$-algebra and let $E$ be a complex vector space. A map $\langle \cdot , \cdot \rangle \colon E \times E \to \mathcal{A}$ is called an \emph{$\mathcal{A}$-valued inner product} if
\begin{enumerate}
    \item $\langle x, x\rangle \geq 0$, for all $x \in E$, and $\langle x, x\rangle = 0 $ if and only if $x=0$.
    \item $\langle x, \alpha y + z\rangle = \alpha \langle x, y\rangle + \langle x, z\rangle$, for all $\alpha \in \mathbb{C}$ and $ x,y,z \in E$.
    \item $\langle x,y\rangle^{\ast} = \langle y,x\rangle$, for all $x,y \in E$.
\end{enumerate}
Further, $E$ is said to be pre Hilbert $\mathcal{A}$-module if $E$ is a right $\mathcal{A}$-module compatible with the complex vector space structure:
\begin{equation*}
    \lambda (x a) = (\lambda x)a = x (\lambda a), \; \text{for all}\; x\in E, \lambda \in \mathbb{C}, a \in \mathcal{A}
\end{equation*}
and equipped with $\mathcal{A}$-valued inner product $\langle \cdot, \cdot \rangle \colon E\times E \to \mathcal{A} $ which is $\mathcal{A}$-linear in its second variable, that is
\begin{equation*}
    \langle x, y a \rangle = \langle x, y  \rangle a , \; \text{for all}\; x,y \in E \; \text{and}\; a \in \mathcal{A}.
\end{equation*}
\end{defn}
\begin{note} Let $\mathcal{A}$ be a locally $C^{\ast}$-algebra with respect to a prescribed family $\{p_{\alpha}: \alpha \in \Lambda\}$ of  $C^{\ast}$-siminorms and $E$ be a pre-Hilbert module over $\mathcal{A}$. Then by \cite[Proposition 1.2.2]{Joita-monograph}, the following \emph{Cauchy-Schwarz} inequality holds:
\begin{equation}\label{Equation: Cauchy-Schwarz}
    p_{\alpha}\big(\langle x, y\rangle\big)^{2} \leq p_{\alpha}\big(\langle x, x \rangle\big)\; p_{\alpha}\big(\langle y, y \rangle\big)
\end{equation}
for all $x,y\in E$ and $\alpha \in \Lambda$. Moreover, for each $\alpha \in \Lambda$, the map $\overline{p}_{\alpha} \colon E \to [0, \infty)$ defined by
\begin{equation*}
    \overline{p}_{\alpha}(x) = \sqrt{p_{\alpha}\big(\langle x,x \rangle\big)}, \; \text{for all}\; x \in E,
\end{equation*}
is a seminorm on $E$. Then $E$ is equipped with the topology generated by the family $\{\overline{p}_{\alpha}: \alpha \in \Lambda\}$ of seminorms, which is a Hausdorff locally convex topology on $E$. \end{note}
\begin{defn}
A pre-Hilbert $\mathcal{A}$-module $E$ is said to be a \emph{Hilbert $\mathcal{A}$-module} if $E$ is complete with respect to the topology generated by the family $\{\overline{p}_{\alpha}: \alpha \in \Lambda\}$ of seminorms on $E$. Moreover, $E$ is said to be \emph{full}, if
\begin{equation*}
    \overline{\Span}\big\{ \langle x,y \rangle: x,y\in E\big\}=\mathcal{A}.
\end{equation*}
\end{defn}
\subsection*{Hilbert locally $C^{*}$-module as a projective limit} We recall that  every Hilbert module over locally $C^{\ast}$-algebra can be seen as a projective limit of inverse system of Hlibert $C^{\ast}$-modules. Let $E$ be a Hilbert module over a locally $C^{\ast}$-algebra $\mathcal{A}$ and suppose that $\{p_{\alpha}: \alpha \in \Lambda\}$ is an associated family of continuous $C^{\ast}$-seminorms on $\mathcal{A}$. It is clear that, for each $\alpha \in \Lambda$, the set $${N}_{\alpha}:= \big\{x\in E:\; p_{\alpha}\big(\langle x,x \rangle) = 0\big\}$$ is a closed submodule of $E$ by the Cauchy-Schwarz inequality as shown in Equation (\ref{Equation: Cauchy-Schwarz}). Let us define $E_{\alpha}:= E/N_{\alpha}$, for each $\alpha \in \Lambda$. Then by \cite[Proposition 1.3.9]{Joita-monograph}, we see that $E_{\alpha}$ is a Hilbert module over $\mathcal{A}_{\alpha}$ with the norm given by
\begin{equation*}
    \|x+N_{\alpha}\|_{E_{\alpha}} = \inf\limits_{y\in N_{\alpha}} \overline{p}_{\alpha}\big( x+y\big), \; \text{for all} \; x \in E.
\end{equation*}
Since $N_{\beta} \subseteq N_{\alpha}$ whenever $\alpha \leq \beta$, there is a natural canonical projection $\sigma_{\alpha, \beta} \colon E_{\beta} \to E_{\alpha}$ defined by
\begin{equation*}
    \sigma_{\alpha, \beta} (x + N_{\beta}) = x+N_{\alpha}, \; \text{for all}\; x \in E.
\end{equation*}
Here $\sigma_{\alpha, \beta}$ is an $\mathcal{A}$-module map such that $\|\sigma_{\alpha, \beta}(x+N_{\beta})\|_{E_{\beta}} \leq \|x+N_{\alpha}\|_{E_{\alpha}}$ for all $x \in E$ and whenever $\alpha \leq \beta$. Suppose that $x,y \in E$ and  $a \in \mathcal{A}$, then for $\alpha \leq \beta$, we see that
\begin{align*}
    \sigma_{\alpha, \beta}\big((x+N_{\beta})(a+\mathcal{I}_{\beta})\big) = \sigma_{\alpha, \beta}\big( xa + N_{\beta}\big)
    &= xa + N_{\alpha} \\
    &= (x+N_{\alpha}) (a+\mathcal{I}_{\alpha})\\
    &= \sigma_{\alpha, \beta}\big((x+N_{\beta})\big)\pi_{\alpha, \beta}(a+\mathcal{I}_{\beta}),
\end{align*}
and
\begin{align*}
    \big\langle \sigma_{\alpha, \beta}(x+N_{\beta}),\; \sigma_{\alpha, \beta}(y+N_{\beta})\big\rangle
    &= \big\langle x+N_{\alpha},\;  y+N_{\alpha})\big\rangle \\
    &= \pi_{\alpha}(\langle x,y \rangle) \\
    &= \pi_{\alpha, \beta}\circ \pi_{\beta}(\langle x,y \rangle)\\
    &= \pi_{\alpha, \beta}\big( \langle x+N_{\beta},\; y+N_{\beta} \rangle\big).
\end{align*}
This implies that $\Big\{ E_{\alpha}; \sigma_{\alpha, \beta}; \mathcal{A}_{\alpha}; \pi_{\alpha, \beta} \Big\}_{\alpha \leq \beta, \alpha, \beta \in \Lambda}$ is an inverse system of Hilbert $C^{\ast}$-modules. Define
\begin{equation*}
    \lim_{\xleftarrow[\alpha]{}} E_{\alpha} := \Big\{\{x_{\alpha}\}_{\alpha \in \Lambda} \in \prod\limits_{\alpha \in \Lambda}E_{\alpha}:\; \sigma_{\alpha, \beta} (x_{\beta}) = x_{\alpha}\; \text{whenever}\; \alpha \leq \beta, \alpha, \beta \in \Lambda\Big\}.
\end{equation*}
Then $\displaystyle \lim_{\xleftarrow[\alpha]{}} E_{\alpha}$ is a Hilbert $\mathcal{A}$-module (see \cite[Proposition 1.2.21]{Joita-monograph}) and $\sigma_{\beta} \colon \displaystyle \lim_{\xleftarrow[\alpha]{}} E_{\alpha} \to E_{\beta}$ defined by
\begin{equation*}
    \sigma_{\beta}\big(\{x_{\alpha}\}_{\alpha \in \Lambda }\big) = x_{\beta},\; \text{for all}\; \{x_{\alpha}\}_{\alpha \in \Lambda }\in \displaystyle \lim_{\xleftarrow[\alpha]{}} E_{\alpha}
\end{equation*}
is $\mathcal{A}$-module map for each $\beta \in \Lambda$. The pair $\Big( \displaystyle \lim_{\xleftarrow[\alpha]{}} E_{\alpha}, \{\sigma_{\alpha}\}_{\alpha\in \Lambda}\Big)$ is known as the projective limit of inverse system  $\Big\{ E_{\alpha}; \sigma_{\alpha, \beta}; \mathcal{A}_{\alpha}; \pi_{\alpha, \beta} \Big\}_{\alpha \leq \beta, \alpha, \beta \in \Lambda}$ of Hilbert $C^{\ast}$-modules.

On the other hand, there is a canonical quotient map $\sigma^{E}_{\alpha}\colon E \to E_{\alpha}$, for each $\alpha \in \Lambda$. Then the pair $\Big(E, \{\sigma^{E}_{\alpha}\}_{\alpha \in \Lambda}\Big)$ is compatible to the inverse system $\Big\{ E_{\alpha}; \sigma_{\alpha, \beta}; \mathcal{A}_{\alpha}; \pi_{\alpha, \beta} \Big\}_{\alpha \leq \beta, \alpha, \beta \in \Lambda}$ of Hilbert $C^{\ast}$-modules in the sense that $\sigma_{\alpha, \beta} \circ \sigma_{\beta}^{E} = \sigma^{E}_{\alpha}$ whenever $\alpha \leq \beta$. Equivalently, for each $\alpha \leq \beta$, the following diagram commutes:
\begin{center}
    \begin{tikzcd}
    E \ar[dr, "\sigma_{\beta}^{E}"]\ar[rr, "\sigma_{\alpha}^{E}"]& & E_{\alpha}\\
    &E_{\beta} \ar[ur, "\sigma_{\alpha, \beta}"]&
    \end{tikzcd}
\end{center}
Hence by \cite[Proposition 1.3.10]{Joita-monograph}, the Hilbert $\mathcal{A}$-modules $E$ and $\displaystyle \lim_{\xleftarrow[\alpha]{}} E_{\alpha}$ are isomorphic through the map $\Gamma \colon E \to \displaystyle \lim_{\xleftarrow[\alpha]{}} E_{\alpha}$ defined by
\begin{equation*}
    \Gamma(x) = \big\{\sigma^{E}_{\alpha}(x)\big\}_{\alpha \in \Lambda}, \; \text{for all}\; x \in E.
\end{equation*}
Therefore, $E$ is a projective limit of the inverse system $\Big\{ E_{\alpha}; \sigma_{\alpha, \beta}; \mathcal{A}_{\alpha}; \pi_{\alpha, \beta} \Big\}_{\alpha \leq \beta, \alpha, \beta \in \Lambda}$ of Hilbert $C^{\ast}$-modules.

The constructions in the following example is  somewhat similar to  realizing a Hilbert $C^{\ast}$-module inside its linking algebra \cite{BGR}.
\begin{eg}\label{Example: Hilbertmodule}
Let $\big\{\mathcal{H}; \mathcal{E}; \mathcal{D}\big\}$ and $\big\{\mathcal{K}; \mathcal{F}; \mathcal{O}\big\}$ be two quantized domains, where
\begin{equation*}
    \mathcal{E}= \{\mathcal{H}_{\ell}:\; \ell \in \Omega\}, \; \mathcal{F}= \{\mathcal{K}_{\ell}:\; \ell \in \Omega\}
\end{equation*}
are upward filtered family of closed subspaces in $\mathcal{H}$ and $\mathcal{K}$ respectively. Motivated from the concrete locally $C^{\ast}$-algebra given in Equation (\ref{e2}), we define a class  of all \emph{noncommutative continuous functions} between quantized domains  $\big\{\mathcal{H}; \mathcal{E}; \mathcal{D}\big\}$ and $\big\{\mathcal{K}; \mathcal{F}; \mathcal{O}\big\}$ as,
\begin{align*}
    &C^{\ast}_{\mathcal{E}, \mathcal{F}}(\mathcal{D}, \mathcal{O})\\
    &:= \Big\{T\in \mathcal{L}(\mathcal{D}, \mathcal{O}):\; T(\mathcal{H}_{\ell}) \subseteq \mathcal{K}_{\ell}, T(\mathcal{H}_{\ell}^{\bot}\cap \mathcal{D}) \subseteq \mathcal{K}_{\ell}^{\bot}\cap \mathcal{O}\; \text{and}\; T\big|_{\mathcal{H}_{\ell}}\in \mathcal{B}(\mathcal{H}_{\ell}, \mathcal{K}_{\ell}),\; \text{for each}\; \ell \in \Omega \Big\}.
\end{align*}
Now we claim that $C^{\ast}_{\mathcal{E}, \mathcal{F}}(\mathcal{D}, \mathcal{O})$ is a Hilbert module over a locally $C^{*}$-algebra $C^{\ast}_{\mathcal{E}}(\mathcal{D})$. Note that $C^{\ast}_{\mathcal{E}, \mathcal{F}}(\mathcal{D}, \mathcal{O})$ is a complex vector space and has a natural right $C^{\ast}_{\mathcal{E}}(\mathcal{D})$-module  with the module map $C^{\ast}_{\mathcal{E}, \mathcal{F}}(\mathcal{D}, \mathcal{O}) \times C^{\ast}_{\mathcal{E}}(\mathcal{D}) \to C^{\ast}_{\mathcal{E}, \mathcal{F}}(\mathcal{D}, \mathcal{O})$ defined by
\begin{equation*}
    (T, S) \mapsto T S, \; \text{for all} \; T\in C^{\ast}_{\mathcal{E}, \mathcal{F}}(\mathcal{D}, \mathcal{O})\; \text{and} \; S\in C^{\ast}_{\mathcal{E}}(\mathcal{D}).
\end{equation*}
Define $\langle \cdot , \cdot \rangle \colon C^{\ast}_{\mathcal{E}, \mathcal{F}}(\mathcal{D}, \mathcal{O}) \times C^{\ast}_{\mathcal{E}, \mathcal{F}}(\mathcal{D}, \mathcal{O}) \to C^{\ast}_{\mathcal{E}}(\mathcal{D}) $ by
\begin{equation*}
    \langle T, S\rangle = T^{\ast}S,\; \text{for all}\; T,S \in C^{\ast}_{\mathcal{E}, \mathcal{F}}(\mathcal{D}, \mathcal{O}).
\end{equation*}
Then it satisfies properties $(1)$-$(3)$ of Definition \ref{Definition: innerproductAmodule}. Also, for every $T_{1}, T_{2}\in C^{\ast}_{\mathcal{E}, \mathcal{F}}(\mathcal{D}, \mathcal{O})$ and $S \in C^{\ast}_{\mathcal{E}}(\mathcal{D})$, we see that
\begin{equation*}
    \langle T_{1}, T_{2}S \rangle = T_{1}^{\ast}T_{2}S = \langle T_{1}, T_{2}\rangle S.
\end{equation*}
This implies that $C^{\ast}_{\mathcal{E}, \mathcal{F}}(\mathcal{D}, \mathcal{O})$ is a pre-Hilbert module over $C^{\ast}_{\mathcal{E}}(\mathcal{D})$. We know
that $\mathcal{B}(\mathcal{H}_{\ell}, \mathcal{K}_{\ell})$ is a right Hilbert module over the  $C^{\ast}$-algebra $\mathcal{B}(\mathcal{H}_{\ell})$ for every
$\ell \in \Omega$. By the nature of quantized domains and the defintion of $C^{\ast}_{\mathcal{E}, \mathcal{F}}(\mathcal{D}, \mathcal{O})$, there are canonical
projections defined by
\begin{equation*}
    \sigma_{\ell, \ell^{\prime}}\colon  \mathcal{B}(\mathcal{H}_{\ell^{\prime}}, \mathcal{K}_{\ell^{\prime}})\to \mathcal{B}(\mathcal{H}_{\ell}, \mathcal{K}_{\ell}) \; \text{and}\; \pi_{\ell, \ell^{\prime}}\colon \mathcal{B}(\mathcal{H}_{\ell ^{\prime}}) \to\mathcal{B}(\mathcal{H}_{\ell} ),
\end{equation*}
whenever $\ell \leq \ell^{\prime}$. Then $\Big\{\mathcal{B}(\mathcal{H}_{\ell}, \mathcal{K}_{\ell}); \sigma_{\ell, \ell^{\prime}}; \mathcal{B}(\mathcal{H}_{\ell}); \pi_{\ell, \ell^{\prime}}\Big\}_{\ell \leq \ell^{\prime}, \ell, \ell^{\prime}\in \Omega}$ is an inverse system of Hilbert $C^{\ast}$-modules and there is a canonical embedding of $C^{\ast}_{\mathcal{E}, \mathcal{F}}(\mathcal{D}, \mathcal{O})$ into the projective limit $\displaystyle \lim_{\xleftarrow[\ell]{}} \mathcal{B}(\mathcal{H}_{\ell}, \mathcal{K}_{\ell})$ which is closed.

Moreover, the locally convex topology on $C^{\ast}_{\mathcal{E}, \mathcal{F}}(\mathcal{D}, \mathcal{O})$ induced from  $\displaystyle \lim_{\xleftarrow[\ell]{}} \mathcal{B}(\mathcal{H}_{\ell}, \mathcal{K}_{\ell})$ is the topology generated by the family of seminorms $\{q_{\ell}: \ell \in \Omega\}$ defined by
\begin{equation*}
   q_{\ell}(T) = \sqrt{\|\langle T, T \rangle\|_{\ell}} = \sqrt{\big\|T^{\ast}T\big|_{\mathcal{H}_{\ell}}\big\|} = \big\|T\big|_{\mathcal{H}_{\ell}}\big\|_{\mathcal{B}(\mathcal{H}_{\ell}, \mathcal{K}_{\ell})}, \; \text{for every}\; T \in C^{\ast}_{\mathcal{E}, \mathcal{F}}(\mathcal{D}, \mathcal{O}),\; \ell \in \Omega.
\end{equation*}
Since $\displaystyle \lim_{\xleftarrow[\ell]{}} \mathcal{B}(\mathcal{H}_{\ell}, \mathcal{K}_{\ell})$ is complete (see \cite[Section 1.1]{Gheondea}), it follows that $C^{\ast}_{\mathcal{E}, \mathcal{F}}(\mathcal{D}, \mathcal{O})$ is complete. Hence $C^{\ast}_{\mathcal{E}, \mathcal{F}}(\mathcal{D}, \mathcal{O})$ is a Hilbert module over the locally $C^{\ast}$-algebra $C^{\ast}_{\mathcal{E}}(\mathcal{D})$.
\end{eg}
\begin{note} It is immediate to note that $T \in C^{\ast}_{\mathcal{E}, \mathcal{F}}(\mathcal{D}, \mathcal{O})$ if and only if $T^{\star} \in C^{\ast}_{\mathcal{F}, \mathcal{E}}(\mathcal{O}, \mathcal{D})$, where $T^{\ast}:= T^{\bigstar}\big|_{\mathcal{D}}$ similar to the case of $C^{\ast}_{\mathcal{E}}(\mathcal{D})$. In particular, if $\mathcal{E} = \mathcal{F}$, then $\mathcal{H}= \mathcal{K}$ and  $C^{\ast}_{\mathcal{E}, \mathcal{E}}(\mathcal{D}, \mathcal{D}) = C^{\ast}_{\mathcal{E}}(\mathcal{D})$ is a locally $C^{\ast}$-algebra as shown in \cite[Proposition 4.1]{Dosiev}.
\end{note}
\begin{defn}
Let $E$ be a Hilbert module over a locally $C^{\ast}$-algebra $\mathcal{A}$. Let $\{\mathcal{H};\mathcal{E};\mathcal{D}\}$ and $\{\mathcal{K};\mathcal{F};\mathcal{O}\}$ be quantized domains. A map $\Phi \colon E \to C^{\ast}_{\mathcal{E}, \mathcal{F}}(\mathcal{D}, \mathcal{O})$ is said to be
\begin{enumerate}
    \item a \emph{$\varphi$-map}  if   $\varphi \colon \mathcal{A} \to C^{\ast}_{\mathcal{E}}(\mathcal{D})$ is linear and
\begin{equation*}
    \big\langle \Phi(x), \Phi(y)\big\rangle = \varphi \big( \langle x, y\rangle\big),\; \text{for all}\; x,y \in {E}.
\end{equation*}

\item \emph{$\varphi $-morphism}, if $\Phi$ is a $\varphi $-map and $\varphi$ is a morphism.
\item     \emph{local CP-inducing map} if it is a $\varphi $-map for some $\varphi \in \cpccloc $.

\end{enumerate}
\end{defn}

\section{Minimality of Stinespring representation for local CP-maps}

The notion of minimality in Stinspring's theorem for CP-maps is very useful. To begin with it gives the uniqueness of the representation up to unitary equivalence. It was used by W. Arveson \cite{Arveson} to prove a Radon-Nikodym theorem for CP-maps. In this section, we identify the notion of minimality for the Stinespring's representation that is presented in Theorem \ref{Theorem: DosievStinespring}. 
Recall that $\mathcal{A}$ is a unital locally $C^{\ast}$-algebra and $\{\mathcal{H}; \mathcal{E}; \mathcal{D} \}$ denotes a quantized domain in a Hilbert space $\mathcal{H}$, where $\mathcal{E}=\{\mathcal{H}_{\alpha}: \alpha\in \Lambda\}.$  We use the notation $[M]$ to denote closure of the linear span of $M$ for any
subset $M$ of ${\mathcal H}.$

\begin{lemma}\label{Lemma: unionabsorption}
Let $\Big(\pi_{\varphi}, V_{\varphi}, \{\mathcal{H}^{\varphi}; \mathcal{E}^{\varphi}; \mathcal{D}^{\varphi}\} \Big)$ be a Stinespring representation of a map $\varphi$ in $\cpccloc$. Then
\begin{equation*}
    \overline{\bigcup\limits_{\ell \in \Omega}[\pi_{\varphi}(\mathcal{A})V_{\varphi}\mathcal{H}_{\ell}]} = [\pi_{\varphi}(\mathcal{A})V_{\varphi}\mathcal{D}].
\end{equation*}
\end{lemma}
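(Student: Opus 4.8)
The plan is to set $W_{\ell} := [\pi_{\varphi}(\mathcal{A}) V_{\varphi} \mathcal{H}_{\ell}]$ for each $\ell \in \Omega$ and to observe that the family $\{W_{\ell}\}_{\ell \in \Omega}$ is increasing: since $\mathcal{E}$ is an upward filtered family, $\ell \leq \ell'$ forces $\mathcal{H}_{\ell} \subseteq \mathcal{H}_{\ell'}$, whence $V_{\varphi}\mathcal{H}_{\ell} \subseteq V_{\varphi}\mathcal{H}_{\ell'}$ and so $W_{\ell} \subseteq W_{\ell'}$. Because $\Omega$ is directed, any finite collection of vectors $\xi_{1} \in \mathcal{H}_{\ell_{1}}, \dots, \xi_{n} \in \mathcal{H}_{\ell_{n}}$ can be absorbed into a single $\mathcal{H}_{\ell}$ with $\ell \geq \ell_{1}, \dots, \ell_{n}$; consequently $\bigcup_{\ell} W_{\ell}$ is already a linear subspace (not merely a union of subspaces), and therefore $\overline{\bigcup_{\ell} W_{\ell}}$ is a closed subspace of $\mathcal{H}^{\varphi}$. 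Recording this at the outset is what makes the two inclusions below clean, since it identifies the left-hand side as a genuine closed subspace.

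For the inclusion $\subseteq$, I would note that $\mathcal{H}_{\ell} \subseteq \mathcal{D}$ for every $\ell$, whence $\pi_{\varphi}(\mathcal{A}) V_{\varphi} \mathcal{H}_{\ell} \subseteq \pi_{\varphi}(\mathcal{A}) V_{\varphi} \mathcal{D}$ and so $W_{\ell} \subseteq [\pi_{\varphi}(\mathcal{A}) V_{\varphi} \mathcal{D}]$. Taking the union over $\ell$ and then the closure, and using that $[\pi_{\varphi}(\mathcal{A}) V_{\varphi} \mathcal{D}]$ is closed by definition, yields $\overline{\bigcup_{\ell} W_{\ell}} \subseteq [\pi_{\varphi}(\mathcal{A}) V_{\varphi} \mathcal{D}]$. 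For the reverse inclusion $\supseteq$, I would invoke the defining property of the quantized domain, namely $\mathcal{D} = \bigcup_{\ell \in \Omega} \mathcal{H}_{\ell}$. Every generator of the right-hand side has the form $\pi_{\varphi}(a) V_{\varphi} \xi$ with $a \in \mathcal{A}$ and $\xi \in \mathcal{D}$; here $V_{\varphi}(\mathcal{E}) \subseteq \mathcal{E}^{\varphi}$ guarantees $V_{\varphi}\xi \in \mathcal{D}^{\varphi}$ so that $\pi_{\varphi}(a)V_{\varphi}\xi$ is well defined. Choosing $\ell$ with $\xi \in \mathcal{H}_{\ell}$ places this vector in $W_{\ell} \subseteq \bigcup_{\ell} W_{\ell}$, so the whole generating set $\pi_{\varphi}(\mathcal{A}) V_{\varphi} \mathcal{D}$ lies inside the closed subspace $\overline{\bigcup_{\ell} W_{\ell}}$. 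Since $[\pi_{\varphi}(\mathcal{A}) V_{\varphi} \mathcal{D}]$ is by definition the smallest closed subspace containing that set, the inclusion $[\pi_{\varphi}(\mathcal{A}) V_{\varphi} \mathcal{D}] \subseteq \overline{\bigcup_{\ell} W_{\ell}}$ follows, completing the proof.

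I do not anticipate a serious obstacle: the statement amounts to the commutation of the operation ``closure of linear span'' with the directed union defining $\mathcal{D}$. The one point genuinely requiring the hypotheses is that the left-hand side is the closure of the bare union $\bigcup_{\ell} W_{\ell}$ rather than of its linear span; these agree precisely because the upward filtration of $\mathcal{E}$ (equivalently, the directedness of $\Omega$) makes $\bigcup_{\ell} W_{\ell}$ closed under finite linear combinations. Beyond $V_{\varphi}(\mathcal{E}) \subseteq \mathcal{E}^{\varphi}$ being needed only to ensure the vectors $\pi_{\varphi}(a)V_{\varphi}\xi$ are well defined, no use is made of the contractivity of $V_{\varphi}$ or of $\pi_{\varphi}$ being a $\ast$-homomorphism, so I would state the argument in a way that isolates exactly these minimal ingredients.
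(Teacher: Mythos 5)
Your proof is correct and follows essentially the same route as the paper, which simply observes that $\bigcup_{\ell \in \Omega}\mathcal{H}_{\ell} = \mathcal{D}$ and leaves the rest (the two inclusions and the role of directedness in making $\bigcup_{\ell}[\pi_{\varphi}(\mathcal{A})V_{\varphi}\mathcal{H}_{\ell}]$ a subspace) implicit. You have merely spelled out the details that the paper deems ``clear,'' so there is nothing to correct.
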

\begin{proof}
This is clear as $\bigcup\limits_{l\in \Omega}{\mathcal H}_l= {\mathcal D}.$
\end{proof}

On first glance one would think that ${\mathcal H}^{\varphi }=  [\pi_{\varphi}(\mathcal{A})V_{\varphi}\mathcal{D}]$ could be the minimality condition for the Stinespring dilation. However the following definition seems to be more appropriate.

\begin{defn}\label{Definition: minimalityForDosiev}
	A Stinespring representation $\big({\pi}_{\varphi}, {V}_{\varphi}, \{\mathcal{H}^{\varphi}; \mathcal{E}^{\varphi}; \mathcal{D}^{\varphi}\}\big)$ of $\varphi$ is said to be  {\it minimal}, if  $\mathcal{H}_{\ell}^{\varphi} = [{{\pi}_{\varphi}(\mathcal{A}){V}_{\varphi}\mathcal{H}_{\ell}}]$, for every $\ell \in \Omega$.
\end{defn}
Note that by Lemma \ref{Lemma: unionabsorption}, under minimality it follows that $\mathcal{H}^{\varphi} = [\pi_{\varphi}(\mathcal{A})V_{\varphi}\mathcal{D}]$.
First  we see that any Stinespring representation can be modified to get a minimal Stinespring representation.
\begin{prop}\label{Proposition: minimality Dosiev} Let $\big(\pi_{\varphi}, V_{\varphi}, \{\mathcal{H}^{\varphi};\mathcal{E}^{\varphi}; \mathcal{D}^{\varphi}\}\big)$ be a Stinespring representation of the map $\varphi\in\cpccloc$ as in Theorem \ref{Theorem: DosievStinespring}. Then there is a minimal Stinespring representation $\big(\widetilde{\pi}_{\varphi}, \widetilde{V}_{\varphi}, \{\widetilde{\mathcal{H}}^{\varphi}; \widetilde{\mathcal{E}}^{\varphi}; \widetilde{\mathcal{D}}^{\varphi}\}\big)$ for $\varphi$ such that
\begin{equation*}
    \widetilde{\mathcal{E}}^{\varphi} \subseteq \mathcal{E}^{\varphi} \;~~ \text{and}\;~~ \widetilde{\mathcal{H}}^{\varphi} = [\widetilde{\pi}_{\varphi}(\mathcal{A})\widetilde{V}_{\varphi}\mathcal{D}].
\end{equation*}
\end{prop}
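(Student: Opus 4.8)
The plan is to build the minimal representation by \emph{cutting down} the given Stinespring space to the subspaces generated by $\pi_{\varphi}(\mathcal{A})V_{\varphi}$. Concretely, for each $\ell \in \Omega$ I would set $\widetilde{\mathcal{H}}_{\ell}^{\varphi} := [\pi_{\varphi}(\mathcal{A})V_{\varphi}\mathcal{H}_{\ell}]$, and let $\widetilde{\mathcal{E}}^{\varphi} := \{\widetilde{\mathcal{H}}_{\ell}^{\varphi}: \ell \in \Omega\}$, $\widetilde{\mathcal{D}}^{\varphi} := \bigcup_{\ell \in \Omega}\widetilde{\mathcal{H}}_{\ell}^{\varphi}$, and $\widetilde{\mathcal{H}}^{\varphi} := \overline{\widetilde{\mathcal{D}}^{\varphi}}$. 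Since $\mathcal{H}_{\ell} \subseteq \mathcal{H}_{\ell'}$ whenever $\ell \leq \ell'$, the family $\widetilde{\mathcal{E}}^{\varphi}$ is upward filtered, so $\{\widetilde{\mathcal{H}}^{\varphi}; \widetilde{\mathcal{E}}^{\varphi}; \widetilde{\mathcal{D}}^{\varphi}\}$ is a quantized domain. The inclusion $V_{\varphi}(\mathcal{H}_{\ell}) \subseteq \mathcal{H}_{\ell}^{\varphi}$ together with the invariance $\pi_{\varphi}(a)(\mathcal{H}_{\ell}^{\varphi}) \subseteq \mathcal{H}_{\ell}^{\varphi}$ yields $\widetilde{\mathcal{H}}_{\ell}^{\varphi} \subseteq \mathcal{H}_{\ell}^{\varphi}$, i.e.\ $\widetilde{\mathcal{E}}^{\varphi} \subseteq \mathcal{E}^{\varphi}$; and Lemma \ref{Lemma: unionabsorption} identifies $\widetilde{\mathcal{H}}^{\varphi}$ with $[\pi_{\varphi}(\mathcal{A})V_{\varphi}\mathcal{D}]$, which is the second displayed assertion.

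Next I would define $\widetilde{\pi}_{\varphi}(a) := \pi_{\varphi}(a)\big|_{\widetilde{\mathcal{D}}^{\varphi}}$ and take $\widetilde{V}_{\varphi}$ to be $V_{\varphi}$ regarded as a map $\mathcal{H} \to \widetilde{\mathcal{H}}^{\varphi}$. That $\widetilde{V}_{\varphi}$ really lands in $\widetilde{\mathcal{H}}^{\varphi}$ follows from unitality of $\pi_{\varphi}$, since $V_{\varphi}\xi = \pi_{\varphi}(1_{\mathcal{A}})V_{\varphi}\xi \in \widetilde{\mathcal{H}}_{\ell}^{\varphi}$ for $\xi \in \mathcal{H}_{\ell}$, and density of $\mathcal{D}$ together with continuity of $V_{\varphi}$ extends this to all of $\mathcal{H}$; moreover $\widetilde{V}_{\varphi}(\mathcal{H}_{\ell}) \subseteq \widetilde{\mathcal{H}}_{\ell}^{\varphi}$, so $\widetilde{V}_{\varphi}(\mathcal{E}) \subseteq \widetilde{\mathcal{E}}^{\varphi}$, and $\widetilde{V}_{\varphi}$ is still a contraction. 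The multiplicativity $\pi_{\varphi}(a)\pi_{\varphi}(b) = \pi_{\varphi}(ab)$ shows that each $\pi_{\varphi}(a)$ carries the algebraic span $\pi_{\varphi}(\mathcal{A})V_{\varphi}\mathcal{H}_{\ell}$ into itself, and since $\pi_{\varphi}(a)\big|_{\mathcal{H}_{\ell}^{\varphi}}$ is bounded this invariance passes to the closure $\widetilde{\mathcal{H}}_{\ell}^{\varphi}$; hence $\widetilde{\pi}_{\varphi}(a)$ is well defined, leaves each $\widetilde{\mathcal{H}}_{\ell}^{\varphi}$ invariant, and is bounded there.

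The step I expect to be the crux is verifying that $\widetilde{\pi}_{\varphi}(a)$ actually belongs to $C^{\ast}_{\widetilde{\mathcal{E}}^{\varphi}}(\widetilde{\mathcal{D}}^{\varphi})$, i.e.\ the $\ast$-condition of Equation (\ref{e2}) that $\widetilde{\pi}_{\varphi}(a)$ preserves the \emph{relative} orthogonal complements $\widetilde{\mathcal{H}}_{\ell}^{\varphi\perp} \cap \widetilde{\mathcal{D}}^{\varphi}$, where the orthocomplement is taken inside $\widetilde{\mathcal{H}}^{\varphi}$. Here I would exploit that $\pi_{\varphi}$ is a $\ast$-homomorphism, so $\pi_{\varphi}(a)^{\ast} = \pi_{\varphi}(a^{\ast})$: for $\eta \perp \widetilde{\mathcal{H}}_{\ell}^{\varphi}$ and a generator $\pi_{\varphi}(b)V_{\varphi}\xi$ with $\xi \in \mathcal{H}_{\ell}$, the identity $\langle \pi_{\varphi}(a)\eta,\, \pi_{\varphi}(b)V_{\varphi}\xi\rangle = \langle \eta,\, \pi_{\varphi}(a^{\ast}b)V_{\varphi}\xi\rangle = 0$ forces $\pi_{\varphi}(a)\eta \perp \widetilde{\mathcal{H}}_{\ell}^{\varphi}$, while $\eta \in \widetilde{\mathcal{H}}_{\ell'}^{\varphi}$ for some $\ell'$ keeps $\pi_{\varphi}(a)\eta$ inside $\widetilde{\mathcal{D}}^{\varphi}$. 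This delivers the required reducing property and confirms that $\widetilde{\pi}_{\varphi}$ is a unital $\ast$-homomorphism; it is local contractive because $\|\widetilde{\pi}_{\varphi}(a)\|_{\ell} = \|\pi_{\varphi}(a)|_{\widetilde{\mathcal{H}}_{\ell}^{\varphi}}\| \leq \|\pi_{\varphi}(a)\|_{\ell}$ inherits the local contractivity of $\pi_{\varphi}$.

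Finally I would check that the Stinespring identity survives the restriction: using $\widetilde{V}_{\varphi}^{\ast} = V_{\varphi}^{\ast}\big|_{\widetilde{\mathcal{H}}^{\varphi}}$ and the relation in Theorem \ref{Theorem: DosievStinespring}, for $\xi \in \mathcal{D}$ one has $\widetilde{V}_{\varphi}^{\ast}\widetilde{\pi}_{\varphi}(a)\widetilde{V}_{\varphi}\xi = V_{\varphi}^{\ast}\pi_{\varphi}(a)V_{\varphi}\xi = \varphi(a)\xi$, so that $\varphi(a) \subseteq \widetilde{V}_{\varphi}^{\ast}\widetilde{\pi}_{\varphi}(a)\widetilde{V}_{\varphi}$. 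Minimality is then immediate from the very construction, since $\widetilde{\mathcal{H}}_{\ell}^{\varphi} = [\pi_{\varphi}(\mathcal{A})V_{\varphi}\mathcal{H}_{\ell}] = [\widetilde{\pi}_{\varphi}(\mathcal{A})\widetilde{V}_{\varphi}\mathcal{H}_{\ell}]$ for every $\ell \in \Omega$, and the identity $\widetilde{\mathcal{H}}^{\varphi} = [\widetilde{\pi}_{\varphi}(\mathcal{A})\widetilde{V}_{\varphi}\mathcal{D}]$ follows again from Lemma \ref{Lemma: unionabsorption}.
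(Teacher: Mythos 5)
Your proposal is correct and follows essentially the same route as the paper: define $\widetilde{\mathcal{H}}_{\ell}^{\varphi} = [\pi_{\varphi}(\mathcal{A})V_{\varphi}\mathcal{H}_{\ell}]$, restrict $\pi_{\varphi}$ and $V_{\varphi}$ to the resulting quantized domain, verify the Stinespring identity on $\mathcal{D}$, and invoke Lemma \ref{Lemma: unionabsorption} for $\widetilde{\mathcal{H}}^{\varphi} = [\widetilde{\pi}_{\varphi}(\mathcal{A})\widetilde{V}_{\varphi}\mathcal{D}]$. The only difference is one of detail: where the paper simply asserts that $\widetilde{\mathcal{H}}_{\ell}^{\varphi}$ reduces every operator in $\pi_{\varphi}(\mathcal{A})$, you spell out the orthogonality argument via $\langle \pi_{\varphi}(a)\eta, \pi_{\varphi}(b)V_{\varphi}\xi\rangle = \langle \eta, \pi_{\varphi}(a^{\ast}b)V_{\varphi}\xi\rangle = 0$, which is exactly the justification the paper leaves implicit.
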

\begin{proof} Let us define $\widetilde{\mathcal{H}}_{\ell}^{\varphi}:= [\pi_{\varphi}(\mathcal{A})V_{\varphi}\mathcal{H}_{\ell}] $ for every $\ell \in \Omega$. It is clear that each $\widetilde{\mathcal{H}}_{\ell}^{\varphi}$ is a closed subspace of $\mathcal{H}_{\ell}^{\varphi}$ since $\pi_{\varphi}(a)\big|_{\mathcal{H}_{\ell}}\in \mathcal{B}(\mathcal{H}_{\ell})$ and $V_{\varphi}(\mathcal{H}_{\ell}) \subseteq \mathcal{H}^{\varphi}_{\ell}$ for every $a\in \mathcal{A}, \ell \in \Omega$. Moreover, $\widetilde{\mathcal{H}}_{\ell}^{\varphi} \subseteq \widetilde{\mathcal{H}}_{\ell^{\prime}}^{\varphi} $ whenever $\ell \leq \ell^{\prime}$. It implies that $\widetilde{\mathcal{E}}^{\varphi}:= \{\widetilde{\mathcal{H}}_{\ell}^{\varphi}:\; \ell \in \Omega\}$ is an upward filtered family of Hilbert spaces such that
$\widetilde{\mathcal{H}}_{\ell}^{\varphi}\subseteq {\mathcal{H}}_{\ell}^{\varphi}$ for every $\ell .$ In other words,
$\widetilde{\mathcal{E}}^{\varphi} \subseteq \mathcal{E}^{\varphi }$. Let $\widetilde{\mathcal{D}}^{\varphi}: = \bigcup\limits_{\ell \in \Omega} \widetilde{\mathcal{H}}_{\ell}^{\varphi}$ and $\widetilde{\mathcal{H}}^{\varphi}:= \overline{\widetilde{\mathcal{D}}^{\varphi}}$. Then $\big\{ \widetilde{\mathcal{H}}^{\varphi};\widetilde{\mathcal{E}}^{\varphi};\widetilde{\mathcal{D}}^{\varphi}\big\}$ is a quantized domain in the Hilbert space $\widetilde{\mathcal{H}}^{\varphi}$.

Further, by using the fact that $\widetilde{\mathcal{H}}_{\ell}^{\varphi}$ reduces every operator in $\pi_{\varphi}(\mathcal{A})$, we have
\begin{equation*}
\pi_{\varphi}(a)\big|_{\widetilde{\mathcal{H}}_{\ell}^{\varphi}} \in \mathcal{B}(\widetilde{\mathcal{H}}_{\ell}^{\varphi}),\; \text{for all}\; a \in \mathcal{A}.
\end{equation*}
Thus the map  $\widetilde{\pi}_{\varphi} \colon \mathcal{A}\to C^{\ast}_{\widetilde{\mathcal{E}}^{\varphi}}(\widetilde{\mathcal{D}}^{\varphi})$ defined by
\begin{equation*}
    \widetilde{\pi}_{\varphi}(a) ={\pi}_{\varphi}(a)\big|_{\widetilde{\mathcal{D}}^{\varphi}}, \; \text{for all}\; a \in \mathcal{A},
\end{equation*}
is well defined and is a unital contractive $\ast$-homomorphism. Let $\widetilde{V}_{\varphi}:= V_{\varphi}$. If $g, h \in \mathcal{D}$, then $g, h  \in \mathcal{H}_{\ell}$ for some $\ell \in \Omega$ and
\begin{align*}
    \Big\langle g, \widetilde{V}_{\varphi}^{\ast}\widetilde{\pi}_{\varphi}(a)\widetilde{V}_{\varphi}h\Big\rangle_{\mathcal{H}_{\ell}}= \Big\langle g, \varphi(a)h\Big\rangle_{\mathcal{H}_{\ell}}.
\end{align*}
This implies that $\varphi(a) \subseteq \widetilde{V}_{\varphi}^{\ast}\widetilde{\pi}_{\varphi}(a)\widetilde{V}_{\varphi}$, for all $a \in \mathcal{A}$. By Lemma \ref{Lemma: unionabsorption}, we have
\begin{equation*}
    \widetilde{\mathcal{H}}^{\varphi} = \overline{\widetilde{\mathcal{D}}^{\varphi}} = \overline{\bigcup\limits_{\ell \in \Omega}\widetilde{\mathcal{H}}_{\ell}^{\varphi}}=\overline{\bigcup\limits_{\ell \in \Omega}[\widetilde{\pi}_{\varphi}(\mathcal{A})\widetilde{V}_{\varphi}\mathcal{H}_{\ell}]} = [\widetilde{\pi}_{\varphi}(\mathcal{A})\widetilde{V}_{\varphi}\mathcal{D}]. \qedhere
\end{equation*}
\end{proof}
  Next, we show that the minimality condition in Definition \ref{Definition: minimalityForDosiev} is appropriate in the sense that any two minimal Stinespring representations are unitarily equivalent.
\begin{thm}\label{Theorem: UniquenessforDosiev} Let $ \big(\widetilde{\pi}_{\varphi}, \widetilde{V}_{\varphi}, \{\widetilde{\mathcal{H}}^{\varphi}; \widetilde{\mathcal{E}}^{\varphi}; \widetilde{D}^{\varphi}\}\big)$ and $\big(\widehat{\pi}_{\varphi}, \widehat{V}_{\varphi}, \{\widehat{\mathcal{H}}^{\varphi}; \widehat{\mathcal{E}}^{\varphi}; \widehat{D}^{\varphi}\}\big)$ be two minimal Stinespring representations of the map
$\varphi \in \cpccloc$. Then there is a unitary operator $U_{\varphi}: \widetilde{\mathcal{H}}^{\varphi} \to \widehat{\mathcal{H}}^{\varphi}$ such that
\begin{align}
U_{\varphi}\widetilde{V}_{\varphi}=\widehat{V}_{\varphi} \;\; \text{and}\;\; U_{\varphi}\widetilde{\pi}_{\varphi}(a)=  \widehat{\pi}_{\varphi}(a)U_{\varphi}, \;\text{for all}\; a\in \mathcal{A}.
\end{align}	
Equivalently, for every $a \in \mathcal{A}$, the following diagram commutes:
\begin{center}
    \begin{tikzcd}
    & & & &\widetilde{\mathcal{H}}^{\varphi} \ar[dddd, "U_{\varphi}", bend right=40]& \widetilde{\mathcal{H}}^{\varphi}\ar[dddd,"U_{\varphi}", bend left=40]& & & & \\
    & & & & \widetilde{\mathcal{D}}^{\varphi} \ar[u, hook]\ar[dd, "U_{\varphi}|_{\widetilde{\mathcal{D}}^{\varphi}}"]\ar[r, "\widetilde{\pi}_{\varphi}(a)"]& \ar[u, hook] \widetilde{\mathcal{D}}^{\varphi} \ar[dd, "U_{\varphi}|_{\widetilde{\mathcal{D}}^{\varphi}}"] & & & & \\
    \mathcal{H}\ar[uurrrr, "\widetilde{V}_{\varphi}"] \ar[ddrrrr, "\widehat{V}_{\varphi}"]& & & & & & & & & \mathcal{H}\ar[uullll, "\widetilde{V}_{\varphi}"] \ar[ddllll, "\widehat{V}_{\varphi}"]\\
    & & & & \widehat{\mathcal{D}}^{\varphi} \ar[d, hook] \ar[r, "\widehat{\pi}_{\varphi}(a)"]& \widehat{\mathcal{D}}^{\varphi} \ar[d, hook] & & & &\\
    & & & & \widehat{\mathcal{H}}^{\varphi} & \widehat{\mathcal{H}}^{\varphi} & & & &
    \end{tikzcd}
\end{center}
\end{thm}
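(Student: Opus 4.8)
The plan is to mimic the classical proof of uniqueness of the minimal Stinespring representation, but carry it out at the level of each fiber $\mathcal{H}_\ell$ and then patch the fibers together into an operator on all of $\widetilde{\mathcal{H}}^\varphi$. The key idea is that both representations dilate the same map $\varphi$, so the inner products of vectors of the form $\widetilde{\pi}_\varphi(a)\widetilde{V}_\varphi h$ depend only on $\varphi$ and not on the particular dilation. First I would define, for each $\ell\in\Omega$, a map $U_\varphi$ on the dense set $\widetilde{\pi}_\varphi(\mathcal{A})\widetilde{V}_\varphi\mathcal{H}_\ell$ by
\begin{equation*}
U_\varphi\Big(\sum_i \widetilde{\pi}_\varphi(a_i)\widetilde{V}_\varphi h_i\Big) = \sum_i \widehat{\pi}_\varphi(a_i)\widehat{V}_\varphi h_i,
\end{equation*}
for finite sums with $a_i\in\mathcal{A}$ and $h_i\in\mathcal{H}_\ell$. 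The central computation is that for $g,h\in\mathcal{H}_\ell$ and $a,b\in\mathcal{A}$,
\begin{equation*}
\big\langle \widetilde{\pi}_\varphi(a)\widetilde{V}_\varphi h,\; \widetilde{\pi}_\varphi(b)\widetilde{V}_\varphi g\big\rangle = \big\langle \widetilde{V}_\varphi h,\; \widetilde{\pi}_\varphi(a^*b)\widetilde{V}_\varphi g\big\rangle = \big\langle h,\; \varphi(a^*b)g\big\rangle,
\end{equation*}
using that $\widetilde{\pi}_\varphi$ is a $*$-homomorphism restricted to $\mathcal{H}_\ell$ and that $\varphi(a^*b)\subseteq \widetilde{V}_\varphi^*\widetilde{\pi}_\varphi(a^*b)\widetilde{V}_\varphi$. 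Since the right-hand side is symmetric in the two dilations, the same value arises for the hatted representation, which shows simultaneously that $U_\varphi$ is well defined (zero maps to zero) and inner-product preserving on each fiber.

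Next I would verify that these fiberwise maps are mutually consistent: because $\widetilde{\mathcal{H}}_\ell^\varphi\subseteq\widetilde{\mathcal{H}}_{\ell'}^\varphi$ for $\ell\leq\ell'$ and the defining formula is the same on the overlap, the maps glue to a single well-defined isometry on $\widetilde{\mathcal{D}}^\varphi=\bigcup_\ell\widetilde{\mathcal{H}}_\ell^\varphi$. By minimality of the first representation, $\widetilde{\mathcal{H}}_\ell^\varphi=[\widetilde{\pi}_\varphi(\mathcal{A})\widetilde{V}_\varphi\mathcal{H}_\ell]$, so this dense set is exactly $\widetilde{\mathcal{D}}^\varphi$, and by minimality of the second representation the range is dense in $\widehat{\mathcal{H}}^\varphi$. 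An isometry with dense domain and dense range extends uniquely to a unitary $U_\varphi\colon\widetilde{\mathcal{H}}^\varphi\to\widehat{\mathcal{H}}^\varphi$. I would also record that $U_\varphi(\widetilde{\mathcal{H}}_\ell^\varphi)=\widehat{\mathcal{H}}_\ell^\varphi$, so $U_\varphi$ respects the quantized structure, which is what makes the two domains genuinely equivalent as quantized domains and not merely as Hilbert spaces.

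Finally I would check the two intertwining relations. Taking $a=1_{\mathcal{A}}$ in the definition gives $U_\varphi\widetilde{V}_\varphi h=\widehat{V}_\varphi h$ for every $h\in\mathcal{D}$, hence $U_\varphi\widetilde{V}_\varphi=\widehat{V}_\varphi$ on the dense set $\mathcal{D}$ and therefore everywhere by continuity. For the covariance relation, applying $U_\varphi\widetilde{\pi}_\varphi(b)$ to a generator $\widetilde{\pi}_\varphi(a)\widetilde{V}_\varphi h$ and using the definition of $U_\varphi$ together with the homomorphism property yields
\begin{equation*}
U_\varphi\widetilde{\pi}_\varphi(b)\widetilde{\pi}_\varphi(a)\widetilde{V}_\varphi h = \widehat{\pi}_\varphi(ba)\widehat{V}_\varphi h = \widehat{\pi}_\varphi(b)U_\varphi\widetilde{\pi}_\varphi(a)\widetilde{V}_\varphi h,
\end{equation*}
so $U_\varphi\widetilde{\pi}_\varphi(b)=\widehat{\pi}_\varphi(b)U_\varphi$ on $\widetilde{\mathcal{D}}^\varphi$ for all $b\in\mathcal{A}$. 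The step I expect to demand the most care is not any single algebraic identity but the bookkeeping forced by unboundedness: I must keep track of which subspace $\mathcal{H}_\ell$ the vectors live in, ensure that every operator is applied only where it is genuinely bounded (namely on the appropriate $\mathcal{H}_\ell^\varphi$), and confirm that the fiberwise unitaries are compatible with the filtration so that $U_\varphi$ maps $\widetilde{\mathcal{D}}^\varphi$ into $\widehat{\mathcal{D}}^\varphi$ and intertwines the representations as unbounded operators on the union spaces rather than merely on the completed Hilbert spaces.
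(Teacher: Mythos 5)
Your proposal is correct and follows essentially the same route as the paper: define $U_{\varphi}$ on the span of $\widetilde{\pi}_{\varphi}(\mathcal{A})\widetilde{V}_{\varphi}\mathcal{D}$ by sending generators to generators, use the identity $\langle \widetilde{\pi}_{\varphi}(a)\widetilde{V}_{\varphi}h,\widetilde{\pi}_{\varphi}(b)\widetilde{V}_{\varphi}g\rangle=\langle h,\varphi(a^{*}b)g\rangle$ to get well-definedness and isometry, invoke minimality on both sides to extend to a unitary, and obtain the two intertwining relations from unitality and the homomorphism property on the dense span. Your fiberwise definition on each $\mathcal{H}_{\ell}$ followed by gluing is the same construction as the paper's (any finite set of vectors in $\mathcal{D}$ lies in a common $\mathcal{H}_{\ell}$), and your explicit remark that $U_{\varphi}(\widetilde{\mathcal{H}}_{\ell}^{\varphi})=\widehat{\mathcal{H}}_{\ell}^{\varphi}$ just makes precise the domain-equality step the paper states more tersely.
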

\begin{proof} It is assumed that the Stinespring representations under consideration are minimal. So we have
\begin{equation*}
    \widetilde{\mathcal{H}}^{\varphi} = [\widetilde{\pi}_{\varphi}(\mathcal{A})\widetilde{V}_{\varphi}\mathcal{D}]\; \; \text{and}\;\; \widehat{\mathcal{H}}^{\varphi} = [\widehat{\pi}_{\varphi}(\mathcal{A})\widehat{V}_{\varphi}\mathcal{D}].
\end{equation*}
Define $U_{\varphi}\colon \text{span}\{\widetilde{\pi}_{\varphi}(\mathcal{A})\widetilde{V}_{\varphi}\mathcal{D}\} \to \text{span}\{\widehat{\pi}_{\varphi}(\mathcal{A})\widehat{V}_{\varphi}\mathcal{D}\}$ by
\begin{equation*}
    U_{\varphi}\Big(\sum\limits_{i=1}^{n} \widetilde{\pi}_{\varphi}(a_{i})\widetilde{V}_{\varphi}h_{i}\Big) = \sum\limits_{i=1}^{n} \widehat{\pi}_{\varphi}(a_{i})\widehat{V}_{\varphi}h_{i},
\end{equation*}
for every $a_{i}\in \mathcal{A}, h_{i}\in \mathcal{D}$, $i \in \{1,2,3,\cdots,n\}, n \in \mathbb{N}$. Then
\begin{align*}
    \Big\|U_{\varphi}\Big(\sum\limits_{i=1}^{n} \widetilde{\pi}_{\varphi}(a_{i})\widetilde{V}_{\varphi}h_{i}\Big)\Big\|^{2}_{\widehat{\mathcal{H}}_{\ell}^{\varphi}} &= \Big\|\sum\limits_{i=1}^{n} \widehat{\pi}_{\varphi}(a_{i})\widehat{V}_{\varphi}h_{i}\Big\|^{2}_{\widehat{\mathcal{H}}_{\ell}^{\varphi}}\\
    &= \sum\limits_{i,j=1}^{n}\Big\langle h_{i}, \; \widehat{V}_{\varphi}^{\ast}\widehat{\pi}_{\varphi}(a_{i})^{\ast}\widehat{\pi}_{\varphi}(a_{j})\widehat{V}_{\varphi}h_{j}\Big\rangle_{\mathcal{H}_{\ell}}\\
    &= \sum\limits_{i,j=1}^{n}\Big\langle h_{i}, \; \varphi(a_{i}^{\ast}a_{j})h_{j}\Big\rangle_{\mathcal{H}_{\ell}}\\
    &= \sum\limits_{i,j=1}^{n}\Big\langle h_{i}, \; \widetilde{V}_{\varphi}^{\ast}\widetilde{\pi}_{\varphi}(a_{i})^{\ast}\widetilde{\pi}_{\varphi}(a_{j})\widetilde{V}_{\varphi}h_{j}\Big\rangle_{\mathcal{H}_{\ell}}\\
    &= \Big\|\sum\limits_{i=1}^{n} \widetilde{\pi}_{\varphi}(a_{i})\widetilde{V}_{\varphi}h_{i}\Big\|^{2}_{\widetilde{\mathcal{H}}_{\ell}^{\varphi}},
\end{align*}
for every $\ell \in \Omega$. This implies that $U_{\varphi}$ is well defined and is an  isometry onto span$\{\widehat{\pi}_{\varphi}(\mathcal{A})\widehat{V}_{\varphi}\mathcal{D}\}$. Thus it can be extended to the whole of $\widetilde{\mathcal{H}}^{\varphi}$, denote this extension by itself. Note that $U_{\varphi}$ is a unitary since it is an isometry onto $\widehat{\mathcal{H}}^{\varphi}$. From the  definition of $U_{\varphi}$ and unitality of $\widetilde{\pi}_{\varphi}, \widehat{\pi}_{\varphi}$,
we see $U_{\varphi}\widetilde{V}_{\varphi} = \widehat{V}_{\varphi}$.

It  remains to show that $U_{\varphi}\widetilde{\pi}_{\varphi}(a) \subseteq  \widehat{\pi}_{\varphi}(a)U_{\varphi}$, for all $a \in \mathcal{A}$.
Now we observe that $$\text{dom}\big(U_{\varphi}\widetilde{\pi}_{\varphi}(a)\big)= \{\xi\in \widetilde{\mathcal{D}}:\widetilde{\pi}_{\varphi}(a) \xi \in \widetilde{\mathcal{H}}^{\varphi}\}= \widetilde{D}^{\varphi},$$
and
$$\text{dom}\big(\widehat{\pi}_{\varphi}(a)U_{\varphi}\big)= \{\xi\in \widetilde{\mathcal{H}}^{\varphi} \colon U_{\varphi}\xi \in  \widehat{\mathcal{D}}^{\varphi}\}$$
Since $U_{\varphi}$ is unitary, it implies that $\text{dom}\big(U_{\varphi}\widetilde{\pi}_{\varphi}(a)\big) = \text{dom}\big(\widehat{\pi}_{\varphi}(a)U_{\varphi}\big)$, for all $a \in \mathcal{A}$. Let $a, a_{i}\in \mathcal{A}$, $h_{i}\in \mathcal{H}_{\ell}$ for $i \in \{1,2,3,\cdots,n\}$, $n \in \mathbb{N}$. Then
\begin{align*}
    U_{\varphi}\widetilde{\pi}_{\varphi}(a)\Big(\sum\limits_{i=1}^{n}\widetilde{\pi}_{\varphi}(a_{i})\widetilde{V}_{\varphi}h_{i}\Big) &=U_{\varphi}\Big(\sum\limits_{i=1}^{n}\widetilde{\pi}_{\varphi}(a a_{i})\widetilde{V}_{\varphi}h_{i}\Big) \\
    &= \sum\limits_{i=1}^{n}\widehat{\pi}_{\varphi}(a a_{i})\widehat{V}_{\varphi}h_{i}\\
    &=\widehat{\pi}_{\varphi}(a)\Big( \sum\limits_{i=1}^{n}\widehat{\pi}_{\varphi}(a_{i})\widehat{V}_{\varphi}h_{i}\Big)\\
    &= \widehat{\pi}_{\varphi}(a)U_{\varphi}\Big( \sum\limits_{i=1}^{n}\widetilde{\pi}_{\varphi}(a_{i})\widetilde{V}_{\varphi}h_{i}\Big),
\end{align*}
for every $\ell \in \Omega$. Thus $U_{\varphi}\widetilde{\pi}_{\varphi}(a) = \widehat{\pi}_{\varphi}(a)U_{\varphi}$ on the dense set $\text{span}\{\widetilde{\pi}_{\varphi}(\mathcal{A})\widetilde{V}_{\varphi}\mathcal{D}\}$. Hence they are equal on $\widetilde{\mathcal{D}}^{\varphi}$. We conclude that $U_{\varphi}\widetilde{\pi}_{\varphi}(a) = \widehat{\pi}_{\varphi}(a)U_{\varphi}$, for all $a \in \mathcal{A}$.
\end{proof}

\section{Randon-Nikodym theorem for local CP-maps}
In this section, we present Radon-Nikodym theorem for local completely positive maps. We need the following  order relation on $\cpccloc$.
\begin{defn}\label{Definition: OrderOnCPCCloc}
Let $\psi, \varphi \in \cpccloc$. Then $\psi$ is said to be \emph{dominated} by $\varphi$, denoted by $\psi \leq \varphi$, if
\begin{equation*}
    \varphi-\psi \in \cpccloc.
\end{equation*}
\end{defn}
In this section we fix a  $\varphi \in \cpccloc$ and let $\Big(\pi_{\varphi}, V_{\varphi}, \{\mathcal{H}^{\varphi}; \mathcal{E}^{\varphi}; \mathcal{D}^{\varphi}\}\Big)$ be a minimal Stinespring representation. Firstly, the commutant of $\pi_{\varphi}(\mathcal{A})$ is denoted by $\pi_{\varphi}(\mathcal{A})^{\prime}$  and it is a collection of bounded operators defined as,
\begin{equation}
    \pi_{\varphi}(\mathcal{A})^{\prime}:= \big\{T \in \mathcal{B}(\mathcal{H}^{\varphi}):\; T\pi_{\varphi}(a) \subseteq \pi_{\varphi}(a)T, \; \text{for all}\; a \in \mathcal{A}\big\}.
\end{equation}
The following observation plays a crucial role in establishing our result.
\begin{lemma}\label{Lemma: minimalityforperp}
Let $\Big(\pi_{\varphi}, V_{\varphi}, \{\mathcal{H}^{\varphi}; \mathcal{E}^{\varphi}; \mathcal{D}^{\varphi}\}\Big)$ be a minimal Stinespring representation for a map $\varphi \in\cpccloc$. Then
\begin{equation*}
    \big[\pi_{\varphi}(\mathcal{A})V_{\varphi}(\mathcal{H}_{\ell}^{\bot}\cap \mathcal{D})\big] = (\mathcal{H}_{\ell}^{\varphi})^{\bot}, \; \text{for every}\; \ell \in \Omega.
\end{equation*}
\begin{proof}
Let us fix $\ell \in \Omega$ and let $\xi \in \text{span}\{\pi_{\varphi}(\mathcal{A})V_{\varphi}(\mathcal{H}_{\ell}^{\bot}\cap \mathcal{D})\}$. Then $\xi = \sum\limits_{i=1}^{n} \pi_{\varphi}(a_{i})V_{\varphi}h_{i}$, for some $a_{i}\in \mathcal{A}, h_{i}\in \mathcal{H}_{\ell}^{\bot}\cap \mathcal{D}$, $i\in \{1,2,3,\cdots, n\}$. Now let $\sum\limits_{j=1}^{m} \pi_{\varphi}(b_{j})V_{\varphi}g_{j}\in \Span\{\pi_{\varphi}(\mathcal{A})V_{\varphi}\mathcal{H}_{\ell}\},$ where
 $b_{j}\in \mathcal{A}$, $g_{j} \in \mathcal{H}_{\ell}$, $j\in \{1,2,3,\cdots,m\}$, we have
\begin{align*}
    \Big\langle \sum\limits_{i=1}^{n} \pi_{\varphi}(a_{i})V_{\varphi}h_{i},\; \sum\limits_{j=1}^{m} \pi_{\varphi}(b_{j})V_{\varphi}g_{j}\Big\rangle &= \sum\limits_{i =1}^{n} \sum\limits_{j=1}^{m}\Big\langle \pi_{\varphi}(a_{i})V_{\varphi}h_{i},\; \pi_{\varphi}(b_{j})V_{\varphi}g_{j}\Big\rangle\\
    &=  \sum\limits_{i =1}^{n} \sum\limits_{j=1}^{m}\Big\langle h_{i},\; V_{\varphi}^{\ast}\pi_{\varphi}(a_{i})^{\ast}\pi_{\varphi}(b_{j})V_{\varphi}g_{j}\Big\rangle\\
    &=\sum\limits_{i =1}^{n} \sum\limits_{j=1}^{m}\Big\langle h_{i},\; V_{\varphi}^{\ast}\pi_{\varphi}(a_{i}^{\ast}b_{j})V_{\varphi}g_{j}\Big\rangle\\
    &= \sum\limits_{i =1}^{n} \sum\limits_{j=1}^{m}\Big\langle h_{i},\; \varphi(a_{i}^{\ast}b_{j}) g_{j}\Big\rangle\\
    &= 0,
\end{align*}
due to the fact that $\varphi(a_{i}^{\ast}b_{j})g_{j} \in \mathcal{H}_{\ell}$ and $h_{i} \in \mathcal{H}_{\ell}^{\bot}$, for each $i,j$. It follows that $$\text{span}\{\pi_{\varphi}(\mathcal{A})V_{\varphi}(\mathcal{H}_{\ell}^{\bot}\cap \mathcal{D})\} \subseteq ( \Span\{\pi_{\varphi}(\mathcal{A})V_{\varphi}\mathcal{H}_{\ell}\})^{\bot}.$$
Thus by the minimality of Stinespring representation, we get that
\begin{equation*}
    \big[\pi_{\varphi}(\mathcal{A})V_{\varphi}(\mathcal{H}_{\ell}^{\bot}\cap \mathcal{D})\big] \subseteq (\mathcal{H}_{\ell}^{\varphi})^{\bot}.
\end{equation*}
 Suppose there is a $ 0\neq \eta \in \big[\pi_{\varphi}(\mathcal{A})V_{\varphi}(\mathcal{H}_{\ell}^{\bot}\cap \mathcal{D})\big]^{\bot} \cap (\mathcal{H}_{\ell}^{\varphi})^{\bot}$, then we see that
\begin{equation}\label{Equation: both}
    \langle \eta, \pi_{\varphi}(a)V_{\varphi}g \rangle  = 0 = \langle \eta, \pi_{\varphi}(a)V_{\varphi}h \rangle, \; \text{for every}\; a \in \mathcal{A}, g\in \mathcal{H}_{\ell}^{\bot}\cap \mathcal{D}, h\in \mathcal{H}_{\ell}.
\end{equation}
Let $x \in \mathcal{D}$. Then $x = g + h$ for some $g \in  \mathcal{H}_{\ell}, h\in \mathcal{H}_{\ell}^{\bot}\cap \mathcal{D}$. By Equation \ref{Equation: both}, we have $\langle \eta, \pi_{\varphi}(a)V_{\varphi}x \rangle = 0$, for every $a \in \mathcal{A}, x \in \mathcal{D}$.
Since $\mathcal{H}^{\varphi} = [\pi_{\varphi}(\mathcal{A})V_{\varphi}\mathcal{D}],$ thus $\langle \eta, \xi\rangle = 0$ for all $\xi \in
\mathcal{D}^{\varphi}$ and by the fact $\mathcal{D}^{\varphi}$ is dense in $\mathcal{H}^{\varphi}$, we obtain $\eta = 0$. This contradicts the fact
that $\eta \neq0.$ Hence we have
\begin{equation*}
    \big[\pi_{\varphi}(\mathcal{A})V_{\varphi}(\mathcal{H}_{\ell}^{\bot}\cap \mathcal{D})\big] = (\mathcal{H}_{\ell}^{\varphi})^{\bot}, \; \text{for every}\; \ell \in \Omega. \qedhere
\end{equation*}
\end{proof}
\end{lemma}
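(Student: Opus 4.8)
The plan is to prove the stated identity as an equality of two closed subspaces of $\mathcal{H}^{\varphi}$, establishing one inclusion by a direct orthogonality computation and the reverse inclusion by a contradiction argument that leans on minimality.

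First I would show $\big[\pi_{\varphi}(\mathcal{A})V_{\varphi}(\mathcal{H}_{\ell}^{\bot}\cap \mathcal{D})\big] \subseteq (\mathcal{H}_{\ell}^{\varphi})^{\bot}$ by checking orthogonality on spanning vectors. Fix $\ell \in \Omega$ and take $\xi = \sum_{i} \pi_{\varphi}(a_{i})V_{\varphi}h_{i}$ with $h_{i}\in \mathcal{H}_{\ell}^{\bot}\cap \mathcal{D}$ and $\zeta = \sum_{j}\pi_{\varphi}(b_{j})V_{\varphi}g_{j}$ with $g_{j}\in \mathcal{H}_{\ell}$. Since each $V_{\varphi}g_{j}$ lies in $\mathcal{H}_{\ell}^{\varphi}$ (as $V_{\varphi}(\mathcal{E})\subseteq \mathcal{E}^{\varphi}$) and $\pi_{\varphi}(b_{j})$ leaves $\mathcal{H}_{\ell}^{\varphi}$ invariant, all inner products are well defined and the $\ast$-homomorphism property lets me rewrite a typical term as $\langle h_{i},\, V_{\varphi}^{\ast}\pi_{\varphi}(a_{i}^{\ast}b_{j})V_{\varphi}g_{j}\rangle = \langle h_{i},\, \varphi(a_{i}^{\ast}b_{j})g_{j}\rangle$, using the Stinespring relation $\varphi(c)\subseteq V_{\varphi}^{\ast}\pi_{\varphi}(c)V_{\varphi}$ of Theorem \ref{Theorem: DosievStinespring}. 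The crucial point is that $\varphi(a_{i}^{\ast}b_{j})\in C^{\ast}_{\mathcal{E}}(\mathcal{D})$ preserves $\mathcal{H}_{\ell}$, so $\varphi(a_{i}^{\ast}b_{j})g_{j}\in \mathcal{H}_{\ell}$ while $h_{i}\perp \mathcal{H}_{\ell}$; thus every term vanishes and the inclusion follows after taking closed spans.

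For the reverse inclusion I would argue by contradiction. Both subspaces are closed and the first step gives $\big[\pi_{\varphi}(\mathcal{A})V_{\varphi}(\mathcal{H}_{\ell}^{\bot}\cap \mathcal{D})\big] \subseteq (\mathcal{H}_{\ell}^{\varphi})^{\bot}$, so if the inclusion were proper there would exist a nonzero $\eta\in (\mathcal{H}_{\ell}^{\varphi})^{\bot}$ that is also orthogonal to $\big[\pi_{\varphi}(\mathcal{A})V_{\varphi}(\mathcal{H}_{\ell}^{\bot}\cap \mathcal{D})\big]$. Minimality gives $\mathcal{H}_{\ell}^{\varphi} = [\pi_{\varphi}(\mathcal{A})V_{\varphi}\mathcal{H}_{\ell}]$, so $\eta$ is simultaneously orthogonal to $\pi_{\varphi}(a)V_{\varphi}g$ for all $g\in\mathcal{H}_{\ell}$ and to $\pi_{\varphi}(a)V_{\varphi}h$ for all $h\in\mathcal{H}_{\ell}^{\bot}\cap\mathcal{D}$, for every $a\in\mathcal{A}$. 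The decisive move is that any $x\in\mathcal{D}$ decomposes as $x = g+h$ with $g = P_{\ell}x\in\mathcal{H}_{\ell}$ and $h = x - g\in\mathcal{H}_{\ell}^{\bot}\cap\mathcal{D}$; this forces $\eta\perp \pi_{\varphi}(a)V_{\varphi}x$ for all $a\in\mathcal{A}$, $x\in\mathcal{D}$, and since $\mathcal{H}^{\varphi} = [\pi_{\varphi}(\mathcal{A})V_{\varphi}\mathcal{D}]$ by Lemma \ref{Lemma: unionabsorption} together with minimality, I conclude $\eta = 0$, contradicting $\eta\neq 0$.

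I expect the genuinely delicate step to be verifying that the complementary vector $h = x - P_{\ell}x$ really lies in the union space $\mathcal{D}$ and not merely in $\mathcal{H}_{\ell}^{\bot}$: one must use the upward filtered structure of $\mathcal{E}$ to find some $\ell''\geq \ell$ with $x\in\mathcal{H}_{\ell''}$, whence $g = P_{\ell}x\in\mathcal{H}_{\ell}\subseteq \mathcal{H}_{\ell''}$ and $h = x - g\in\mathcal{H}_{\ell''}\subseteq\mathcal{D}$. The remaining orthogonality computation in the first step is routine, and the contradiction argument is the familiar density/minimality scheme adapted from the classical Stinespring construction.
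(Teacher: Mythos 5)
Your proposal is correct and follows essentially the same route as the paper's proof: the same orthogonality computation via the Stinespring relation and invariance of $\mathcal{H}_{\ell}$ under $\varphi(a_{i}^{\ast}b_{j})$ for the inclusion $\big[\pi_{\varphi}(\mathcal{A})V_{\varphi}(\mathcal{H}_{\ell}^{\bot}\cap \mathcal{D})\big] \subseteq (\mathcal{H}_{\ell}^{\varphi})^{\bot}$, and the same contradiction argument using the decomposition $x = g + h$ together with minimality and $\mathcal{H}^{\varphi} = [\pi_{\varphi}(\mathcal{A})V_{\varphi}\mathcal{D}]$ for the reverse inclusion. In fact you supply a justification the paper leaves implicit, namely that $h = x - P_{\ell}x$ lies in $\mathcal{D}$ (via directedness of the upward filtered family), which is a welcome refinement rather than a different approach.
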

\begin{prop}\label{Proposition: phiTisCPandCC}
Let $\varphi \in \cpccloc$ and let $T \in \pi_{\mathcal{\varphi}}(\mathcal{A})^{\prime} \cap C^{\ast}_{\mathcal{E}^{\varphi}}(\mathcal{D}^{\varphi})$. Then the map $\rvarphi_{T} \colon \mathcal{A} \to C^{\ast}_{\mathcal{E}}(\mathcal{D})$ defined by
\begin{equation}\label{Equation: forphiT}
    \rvarphi_{T}(a) = V_{\varphi}^{\ast}T\pi_{\varphi}(a)V_{\varphi}\big|_{\mathcal{D}}, \; \text{for all}\; a \in \mathcal{A}
\end{equation}
is well-defined. Furthermore, $\varphi_{T} \in \cpccloc$ and $ \rvarphi_{T} \leq \varphi$ whenever $0 \leq T \leq I$.
\end{prop}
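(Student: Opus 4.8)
The plan is to read the formula \eqref{Equation: forphiT} as a compression by $V_{\varphi}$ of an element of the algebra $C^{\ast}_{\mathcal{E}^{\varphi}}(\mathcal{D}^{\varphi})$, exactly mirroring the way $\varphi$ itself sits inside $V_{\varphi}^{\ast}\pi_{\varphi}(\cdot)V_{\varphi}$ in Theorem \ref{Theorem: DosievStinespring}. The first task is to pin down how $V_{\varphi}$ interacts with the quantized structure. The key identity is $V_{\varphi}^{\ast}\pi_{\varphi}(b)V_{\varphi}g = \varphi(b)g$ for $g \in \mathcal{D}$, which comes from $\varphi(b) \subseteq V_{\varphi}^{\ast}\pi_{\varphi}(b)V_{\varphi}$. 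Applying this on the minimal spanning set $\mathcal{H}^{\varphi}_{\ell} = [\pi_{\varphi}(\mathcal{A})V_{\varphi}\mathcal{H}_{\ell}]$, and using that $\varphi(b) \in C^{\ast}_{\mathcal{E}}(\mathcal{D})$ leaves $\mathcal{H}_{\ell}$ invariant, I obtain $V_{\varphi}^{\ast}(\mathcal{H}^{\varphi}_{\ell}) \subseteq \mathcal{H}_{\ell}$ after passing to the closure (by boundedness of $V_{\varphi}^{\ast}$). Applying it instead on the complementary minimal spanning set $(\mathcal{H}^{\varphi}_{\ell})^{\bot} = [\pi_{\varphi}(\mathcal{A})V_{\varphi}(\mathcal{H}_{\ell}^{\bot}\cap\mathcal{D})]$ supplied by Lemma \ref{Lemma: minimalityforperp}, and using invariance of $\mathcal{H}_{\ell}^{\bot}\cap\mathcal{D}$ under $\varphi(b)$, I obtain $V_{\varphi}^{\ast}\big((\mathcal{H}^{\varphi}_{\ell})^{\bot}\big) \subseteq \mathcal{H}_{\ell}^{\bot}$. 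These two inclusions are equivalent to the intertwining relation $V_{\varphi}^{\ast}P^{\varphi}_{\ell} = P_{\ell}V_{\varphi}^{\ast}$, and dually $P^{\varphi}_{\ell}V_{\varphi} = V_{\varphi}P_{\ell}$, for every $\ell \in \Omega$, where $P^{\varphi}_{\ell}$ is the projection onto $\mathcal{H}^{\varphi}_{\ell}$.

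Well-definedness then follows quickly. Since $C^{\ast}_{\mathcal{E}^{\varphi}}(\mathcal{D}^{\varphi})$ is an algebra and both $T$ and $\pi_{\varphi}(a)$ lie in it, the product $S_a := T\pi_{\varphi}(a)$ belongs to $C^{\ast}_{\mathcal{E}^{\varphi}}(\mathcal{D}^{\varphi})$; hence $S_a$ leaves each $\mathcal{H}^{\varphi}_{\ell}$ invariant with bounded restriction and leaves each $(\mathcal{H}^{\varphi}_{\ell})^{\bot}\cap\mathcal{D}^{\varphi}$ invariant. Chasing a vector $h \in \mathcal{H}_{\ell}$ through $V_{\varphi}$, then $S_a$, then $V_{\varphi}^{\ast}$, and invoking the inclusions above, I would verify that $\varphi_T(a)$ maps $\mathcal{D}$ into $\mathcal{D}$, leaves every $\mathcal{H}_{\ell}$ invariant with bounded restriction, and (running the same chase on $\mathcal{H}_{\ell}^{\bot}\cap\mathcal{D}$, where $P^{\varphi}_{\ell}V_{\varphi}=V_{\varphi}P_{\ell}$ is used) leaves every $\mathcal{H}_{\ell}^{\bot}\cap\mathcal{D}$ invariant. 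By the characterisation of $C^{\ast}_{\mathcal{E}}(\mathcal{D})$ recorded after \eqref{e2}, this places $\varphi_T(a)$ in $C^{\ast}_{\mathcal{E}}(\mathcal{D})$, so $\varphi_T$ is well defined.

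For positivity and contractivity assume $0 \leq T \leq I$. By continuous functional calculus $T^{1/2}$ again lies in $\pi_{\varphi}(\mathcal{A})' \cap C^{\ast}_{\mathcal{E}^{\varphi}}(\mathcal{D}^{\varphi})$: being a norm-limit of polynomials in $T$, it commutes with each $\pi_{\varphi}(a)$ and with each $P^{\varphi}_{\ell}$, and $\|T^{1/2}\|\leq 1$. Setting $W := T^{1/2}V_{\varphi}$, the commutation $T^{1/2}\pi_{\varphi}(a) = \pi_{\varphi}(a)T^{1/2}$ rewrites \eqref{Equation: forphiT} as $\varphi_T(a) = W^{\ast}\pi_{\varphi}(a)W\big|_{\mathcal{D}}$, where $W$ is a contraction sending $\mathcal{H}_{\ell}$ into $\mathcal{H}^{\varphi}_{\ell}$. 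Then for $[a_{ij}] \geq_{\alpha} 0$ in $M_n(\mathcal{A})$ and $\xi \in \mathcal{H}_{\ell}^{n}$, writing $W^{(n)}:=W\oplus\cdots\oplus W$ ($n$ copies), I would compute $\langle \xi, \varphi_T^{(n)}([a_{ij}])\xi\rangle = \langle W^{(n)}\xi,\; \pi_{\varphi}^{(n)}([a_{ij}])W^{(n)}\xi\rangle \geq 0$, using that the $\ast$-homomorphism $\pi_{\varphi}$ is local completely positive; this yields local complete positivity of $\varphi_T$. Local complete contractivity follows from the same factorisation together with $\|W^{(n)}\|\leq 1$ and the local complete contractivity of $\pi_{\varphi}$ (each ampliation descends to a genuine $C^{\ast}$-algebra $\ast$-homomorphism $\mathcal{A}_{\alpha}\to \mathcal{B}((\mathcal{H}^{\varphi}_{\ell})^{n})$, which is automatically contractive). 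Finally, $\varphi_T \leq \varphi$ is obtained by noting $\varphi(a)=\varphi_I(a)$ on $\mathcal{D}$ and that $T\mapsto\varphi_T$ is linear, so $\varphi-\varphi_T=\varphi_{I-T}$; since $0\leq I-T\leq I$ and $I-T\in\pi_{\varphi}(\mathcal{A})'\cap C^{\ast}_{\mathcal{E}^{\varphi}}(\mathcal{D}^{\varphi})$, the preceding argument applied to $I-T$ shows $\varphi-\varphi_T\in\cpccloc$.

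The main obstacle is the well-definedness, and specifically establishing that $V_{\varphi}$ intertwines the projections $P_{\ell}$ and $P^{\varphi}_{\ell}$. Without minimality the compression $V_{\varphi}^{\ast}S_a V_{\varphi}$ need not respect the quantized domain $\mathcal{E}$, and it is precisely the minimal spanning of $\mathcal{H}^{\varphi}_{\ell}$ together with Lemma \ref{Lemma: minimalityforperp} and the invariance properties of the $\varphi(b)$ that supply the extra inclusion $V_{\varphi}^{\ast}(\mathcal{H}^{\varphi}_{\ell}) \subseteq \mathcal{H}_{\ell}$. The remaining verification that $T^{1/2}$ preserves both the commutant and the quantized structure is a routine functional-calculus point, and the positivity and contractivity estimates are then standard Stinespring-type computations.
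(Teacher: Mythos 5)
Your proposal is correct and follows essentially the same route as the paper's proof: well-definedness via Lemma \ref{Lemma: minimalityforperp} together with minimality, local complete positivity and contractivity via the factorisation through $\sqrt{T}\,V_{\varphi}$ (using $T\in\pi_{\varphi}(\mathcal{A})^{\prime}$ to move $\sqrt{T}$ across $\pi_{\varphi}(a)$), and domination via $\varphi-\rvarphi_{T}=\rvarphi_{(I-T)}$. The only cosmetic difference is that you package the minimality facts as the projection intertwining $V_{\varphi}^{\ast}P^{\varphi}_{\ell}=P_{\ell}V_{\varphi}^{\ast}$ before chasing vectors, whereas the paper verifies the same invariances of $\mathcal{H}_{\ell}$ and $\mathcal{H}_{\ell}^{\bot}\cap\mathcal{D}$ directly by inner-product computations.
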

\begin{proof}
First we show that $\rvarphi_{T}(a) \in C^{\ast}_{\mathcal{E}}(\mathcal{D})$, for all $a \in \mathcal{A}$. Let $h \in \mathcal{H}_{\ell}$ and $g \in \mathcal{H}_{\ell}^{\bot} \cap \mathcal{D}$  for some $\ell \in \Omega$. Then $V_{\varphi}(g) = \pi_{\varphi}(1)V_{\varphi}(g) \in (\mathcal{H}_{\ell}^{\varphi})^{\bot}$ by Lemma \ref{Lemma: minimalityforperp}. Thus $V_{\varphi}g \in (\mathcal{H}_{\ell}^{\varphi})^{\bot}$  and  $T\pi_{\varphi}(a)V_{\varphi}h \in \mathcal{H}_{\ell}^{\varphi}$   since $ \pi_{\varphi}(a)\in C^{\ast}_{\mathcal{E}^{\varphi}}(\mathcal{D}^{\varphi})$ for all $a \in \mathcal{A}$. Therefore,
$$    \big\langle g,\; \varphi_{T}(a)h\big\rangle = \big\langle g,\; V_{\varphi}^{\ast}T\pi_{\varphi}(a)V_{\varphi}h\big\rangle\\
    =  \big\langle V_{\varphi}g,\; {T}\pi_{\varphi}(a)V_{\varphi}h\big\rangle
    =0,$$
for all $a \in \mathcal{A},\; h \in \mathcal{H}_{\ell},\; g \in \mathcal{H}_{\ell}^{\bot}\cap \mathcal{D}$. It is clear that $\mathcal{H}_{\ell}$, $\mathcal{H}_{\ell}^{\bot}\cap \mathcal{D}$ are invariant under $\rvarphi_{T}(a)$ such that
$\rvarphi_{T}(a)\big|_{\mathcal{H}_{\ell}} = V_{\varphi}^{\ast}T\pi_{\varphi}(a)V_{\varphi}\big|_{\mathcal{H}_{\ell}} \in \mathcal{B}(\mathcal{H}_{\ell})$ for all $ a \in \mathcal{A}, \ell \in \Omega $. Hence $\rvarphi_{T}$ is a well-defined map.

 Suppose $ 0 \leq T \leq I$. Now we  show that  $\rvarphi_{T} \in \cpccloc$. Let $A: = [a_{ij}]_{n\times n} \in M_{n}(\mathcal{A})$ and $h_{1}, h_{2}, h_{3}, \cdots h_{n} \in \mathcal{H}_{\ell}$ for some $\ell \in \Omega$. Then
\begin{align*}
    \Big\langle \begin{bmatrix}
    h_{1}\\
   \vdots\\
    h_{n}
    \end{bmatrix}, \;\rvarphi_{T}^{(n)}(A)\begin{bmatrix}
    h_{1}\\
   \vdots\\
    h_{n}
    \end{bmatrix}
    \Big\rangle_{\mathcal{H}_{\ell}^{n}}
    &= \Big\langle \begin{bmatrix}
    h_{1}\\
    \vdots\\
    h_{n}
    \end{bmatrix}, \;[\rvarphi_{T}(a_{ij})]_{n\times n}\begin{bmatrix}
    h_{1}\\
    \vdots\\
    h_{n}
    \end{bmatrix}
    \Big\rangle_{\mathcal{H}_{\ell}^{n}}\\
    &= \Big\langle \begin{bmatrix}
    h_{1}\\
    \vdots\\
    h_{n}
    \end{bmatrix}, \;[V_{\varphi}^{\ast}T\pi_{\varphi}(a_{ij})V_{\varphi}]_{n\times n}\begin{bmatrix}
    h_{1}\\
    \vdots\\
    h_{n}
    \end{bmatrix}
    \Big\rangle_{\mathcal{H}_{\ell}^{n}}\\
    &= \Big\langle \begin{bmatrix}
    h_{1}\\
    \vdots\\
    h_{n}
    \end{bmatrix}, \;[V_{\varphi}^{\ast}\sqrt{T}\pi_{\varphi}(a_{ij})\sqrt{T}V_{\varphi}]_{n\times n}\begin{bmatrix}
    h_{1}\\
    \vdots\\
    h_{n}
    \end{bmatrix}
    \Big\rangle_{\mathcal{H}_{\ell}^{n}}\\
    &= \Big\langle \begin{bmatrix}
    \sqrt{T}V_{\varphi}h_{1}\\
    \vdots\\
    \sqrt{T}V_{\varphi}h_{n}
    \end{bmatrix}, \;[\pi_{\varphi}(a_{ij})]_{n\times n}\begin{bmatrix}
    \sqrt{T}V_{\varphi}h_{1}\\
    \vdots\\
    \sqrt{T}V_{\varphi}h_{n}
    \end{bmatrix}
    \Big\rangle_{(\mathcal{H}_{\ell}^{\varphi})^{n}}\\
    &= \Big\langle \begin{bmatrix}
    \sqrt{T}V_{\varphi}h_{1}\\
    \vdots\\
    \sqrt{T}V_{\varphi}h_{n}
    \end{bmatrix}, \;\pi_{\varphi}^{(n)}(A)\begin{bmatrix}
    \sqrt{T}V_{\varphi}h_{1}\\
    \vdots\\
    \sqrt{T}V_{\varphi}h_{n}
    \end{bmatrix}
    \Big\rangle_{(\mathcal{H}_{\ell}^{\varphi})^{n}}.
    \end{align*}
Since $\pi_{\varphi} \in \mathcal{CP}_{loc}(\mathcal{A}, C^{\ast}_{\mathcal{E}^{\varphi}}\big(\mathcal{D}^{\varphi})\big)$, we know that for $\ell \in \Omega$ there exists $\alpha \in \Lambda$ such that $\pi^{(n)}_{\varphi}(A)\big|_{\mathcal{H}_{\ell}^{\varphi}} \geq 0$ whenever $A \geq_{\alpha} 0$ for all $n.$  In particular,

\begin{align*}
\Big\langle \begin{bmatrix}
    h_{1}\\
   \vdots\\
    h_{n}
    \end{bmatrix}, \;\rvarphi_{T}^{(n)}(A)\begin{bmatrix}
    h_{1}\\
   \vdots\\
    h_{n}
    \end{bmatrix}
    \Big\rangle_{\mathcal{H}_{\ell}^{n}}
    =
   \Big\langle\begin{bmatrix} \sqrt{T}V_{\varphi}h_{1}\\
    \vdots\\
 \sqrt{T}V_{\varphi}h_{n}
\end{bmatrix},\; \pi_{\varphi}^{(n)} (A)\begin{bmatrix}
\sqrt{T}V_{\varphi}h_{1}\\
\vdots\\
\sqrt{T}V_{\varphi}h_{n}
\end{bmatrix}\Big\rangle_{\mathcal{H}_{\ell}^{n}}
\geq 0,
\end{align*}
whenever $A \geq_{\alpha} 0$.
This implies that there exists $\alpha \in \Lambda $ such that for every $n \in \mathbb{N}$ we have
\begin{equation*}
    \varphi_{T}^{(n)} (A)\big|_{\mathcal{H}_{\ell}^{n}}\geq 0,\; \text{whenever}\; A \geq_{\alpha} 0.
\end{equation*}
As a result $\rvarphi_{T} \in \mathcal{CP}_{loc}\big(\mathcal{A}, C^{\ast}_{\mathcal{E}}(\mathcal{D})\big)$. Now by using the fact that $T$ and $V_{\varphi}$ are contractions, we show $\rvarphi_{T}\in \mathcal{CC}_{loc}\big(\mathcal{A}, C^{\ast}_{\mathcal{E}}(\mathcal{D})\big)$ as follows:
\begin{align*}
    \Big\|\rvarphi_{T}^{(n)}(A)\begin{bmatrix}
    h_{1}\\
    \vdots\\
    h_{n}\end{bmatrix}\Big\|^{2}_{\mathcal{H}_{\ell}^{n}}
    &= \Big\|\big[\rvarphi_{T}(a_{i,j})\big]_{n \times n}\begin{bmatrix}
    h_{1}\\
    \vdots\\
    h_{n}\end{bmatrix}\Big\|^{2}_{\mathcal{H}_{\ell}^{n}}\\
    &= \sum\limits_{i=1}^{n}\; \Big\| \sum\limits_{j=1}^{n} \rvarphi_{T}(a_{ij})h_{j}\Big\|^{2}_{\mathcal{H}_{\ell}} \\
    &= \sum\limits_{i=1}^{n}\;\Big\| \sum\limits_{j=1}^{n} V_{\varphi}^{\ast}T\pi_{\varphi}(a_{ij})V_{\varphi}h_{j}\Big\|^{2}_{\mathcal{H}_{\ell}}\\
    &= \sum\limits_{i=1}^{n}\;\Big\|  V_{\varphi}^{\ast}T \Big(\sum\limits_{j=1}^{n}\pi_{\varphi}(a_{ij})V_{\varphi}h_{j}\Big)\Big\|^{2}_{\mathcal{H}_{\ell}}\\
    &\leq \sum\limits_{i=1}^{n}\;\Big\| \sum\limits_{j=1}^{n} \pi_{\varphi}(a_{ij})V_{\varphi}h_{j}\Big\|^{2}_{\mathcal{H}_{\ell}^{\varphi}}\\
    &= \Big\|\pi_{\varphi}^{(n)}(A)\begin{bmatrix}
    V_{\varphi}h_{1}\\
    \vdots\\
    V_{\varphi}h_{n}\end{bmatrix}\Big\|^{2}_{(\mathcal{H}_{\ell}^{\varphi})^{n}}.
    \end{align*}
    Since $\pi_{\varphi}$ is a locally completely contractive map, for $\ell \in \Omega$, there exists an $\alpha \in \Lambda$ such that
    \begin{equation*}
        \Big\| \pi_{\varphi}^{(n)}(B)\Big\|_{(\mathcal{H}_{\ell}^{\varphi})^{n}} \leq \|B\|_{\alpha}, \; \text{for every }\; B \in M_{n}(\mathcal{A}), n\in \mathbb{N}.
    \end{equation*}
    This implies that
$$        \Big\|\rvarphi_{T}^{(n)}(A)\begin{bmatrix}
    h_{1}\\
    \vdots\\
    h_{n}\end{bmatrix}\Big\|^{2}_{\mathcal{H}_{\ell}^{n}}
    \leq \Big\|\pi_{\varphi}^{(n)}(A)\begin{bmatrix}
    V_{\varphi}h_{1}\\
    \vdots\\
    V_{\varphi}h_{n}\end{bmatrix}\Big\|^{2}_{(\mathcal{H}_{\ell}^{\varphi})^{n}} 
    \leq \|A\|_{\alpha}\; \Big\|\begin{bmatrix}
    V_{\varphi}h_{1}\\
    \vdots\\
    V_{\varphi}h_{n}\end{bmatrix}\Big\|^{2}_{(\mathcal{H}_{\ell}^{\varphi})^{n}}
    \leq \|A\|_{\alpha}\; \Big\|\begin{bmatrix}
    h_{1}\\
    \vdots\\
    h_{n}\end{bmatrix}\Big\|^{2}_{\mathcal{H}_{\ell}^{n}},
   $$
    for every $n \in \mathbb{N}$.    Therefore, $\rvarphi_{T} \in \cpccloc$ whenever $0 \leq T \leq I$.
Moreover, by the definition of $\rvarphi_{T}$, we see that
$$    (\varphi- \rvarphi_{T})(a) = V_{\varphi}^{\ast}\pi_{\varphi}(a)V_{\varphi}\big|_{\mathcal{D}} - V_{\varphi}^{\ast}T\pi_{\varphi}(a)V_{\varphi}\big|_{\mathcal{D}}
    = V_{\varphi}^{\ast}(I-T)\pi_{\varphi}(a)V_{\varphi}\big|_{\mathcal{D}}
    = \rvarphi_{(I-T)}(a),$$
for all $a \in \mathcal{A}$. We know that $\rvarphi_{(I-T)} \in \cpccloc$ since $0 \leq (I-T) \leq I$ and by Definition \ref{Definition: OrderOnCPCCloc}, we have $\rvarphi_{T}\leq \varphi$. Hence the result.
    \end{proof}
\begin{lemma}\label{Lemma: uniqueness}
As above consider  $\rvarphi_{T} $ for $T \in \pi_{\varphi}(\mathcal{A})^{\prime} \cap C^{\ast}_{\mathcal{E}^{\varphi}}(\mathcal{D}^{\varphi}).$ Then  $\rvarphi_{T}=0$ if
and only if  $T = 0$.
\end{lemma}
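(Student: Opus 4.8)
The forward direction is trivial, since $T=0$ immediately gives $\rvarphi_{T}(a)=V_{\varphi}^{\ast}\,0\,\pi_{\varphi}(a)V_{\varphi}\big|_{\mathcal{D}}=0$ for every $a$. So the plan is to prove the converse: assuming $\rvarphi_{T}=0$, I would deduce $T=0$. The guiding idea is that minimality forces $\mathcal{H}^{\varphi}=[\pi_{\varphi}(\mathcal{A})V_{\varphi}\mathcal{D}]$, so it suffices to show that $T$ annihilates each generating vector $\pi_{\varphi}(a)V_{\varphi}h$; being bounded, $T$ will then vanish on the dense span and hence on all of $\mathcal{H}^{\varphi}$.

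To keep all manipulations within genuine Hilbert spaces and avoid domain issues, I would fix $\ell\in\Omega$ and work inside $\mathcal{H}^{\varphi}_{\ell}$. Every pair $g,h\in\mathcal{D}$ lies in some $\mathcal{H}_{\ell}$, and the inclusions $V_{\varphi}(\mathcal{H}_{\ell})\subseteq\mathcal{H}^{\varphi}_{\ell}$, together with $\pi_{\varphi}(a)\big|_{\mathcal{H}^{\varphi}_{\ell}}\in\mathcal{B}(\mathcal{H}^{\varphi}_{\ell})$ and $T\big|_{\mathcal{H}^{\varphi}_{\ell}}\in\mathcal{B}(\mathcal{H}^{\varphi}_{\ell})$ (the latter since $T\in C^{\ast}_{\mathcal{E}^{\varphi}}(\mathcal{D}^{\varphi})$), guarantee that $\pi_{\varphi}(a)V_{\varphi}h$ and $T\pi_{\varphi}(a)V_{\varphi}h$ stay in $\mathcal{H}^{\varphi}_{\ell}$. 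On this fixed Hilbert space the relations $\pi_{\varphi}(b)^{\ast}=\pi_{\varphi}(b^{\ast})$ (since $\pi_{\varphi}$ is a $\ast$-homomorphism) and $\pi_{\varphi}(b^{\ast})T=T\pi_{\varphi}(b^{\ast})$ (since $T\in\pi_{\varphi}(\mathcal{A})^{\prime}$) become plain identities of bounded operators.

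The key computation is then, for $a,b\in\mathcal{A}$ and $g,h\in\mathcal{H}_{\ell}$,
\begin{align*}
\big\langle T\pi_{\varphi}(a)V_{\varphi}h,\; \pi_{\varphi}(b)V_{\varphi}g\big\rangle
&= \big\langle \pi_{\varphi}(b^{\ast})T\pi_{\varphi}(a)V_{\varphi}h,\; V_{\varphi}g\big\rangle\\
&= \big\langle T\pi_{\varphi}(b^{\ast}a)V_{\varphi}h,\; V_{\varphi}g\big\rangle\\
&= \overline{\big\langle g,\; V_{\varphi}^{\ast}T\pi_{\varphi}(b^{\ast}a)V_{\varphi}h\big\rangle}
= \overline{\big\langle g,\; \rvarphi_{T}(b^{\ast}a)h\big\rangle}=0,
\end{align*}
using the commutation to pass $T$ through $\pi_{\varphi}(b^{\ast})$, the homomorphism property to combine $\pi_{\varphi}(b^{\ast})\pi_{\varphi}(a)=\pi_{\varphi}(b^{\ast}a)$, and finally the hypothesis $\rvarphi_{T}=0$ in the last step. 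Thus $T\pi_{\varphi}(a)V_{\varphi}h$ is orthogonal to every $\pi_{\varphi}(b)V_{\varphi}g$, hence to $[\pi_{\varphi}(\mathcal{A})V_{\varphi}\mathcal{D}]=\mathcal{H}^{\varphi}$ by minimality, forcing $T\pi_{\varphi}(a)V_{\varphi}h=0$. Since such vectors span a dense subspace and $T$ is bounded, $T=0$.

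I expect the only genuine subtlety — the \emph{hard part} — to be the bookkeeping of unbounded-operator domains when moving $\pi_{\varphi}(b)^{\ast}$ across the inner product and when invoking the commutation $T\pi_{\varphi}(b^{\ast})\subseteq\pi_{\varphi}(b^{\ast})T$. Restricting at the outset to a fixed $\mathcal{H}^{\varphi}_{\ell}$, where all operators involved are bounded and the adjoint is the ordinary Hilbert-space adjoint, is precisely what removes this difficulty and lets the computation proceed formally as in the classical bounded setting.
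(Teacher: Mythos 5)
Your proof is correct and follows essentially the same route as the paper's: both show that $\big\langle \pi_{\varphi}(b)V_{\varphi}g,\; T\pi_{\varphi}(a)V_{\varphi}h \big\rangle = \big\langle g,\; \rvarphi_{T}(b^{\ast}a)h\big\rangle = 0$ by commuting $T$ past $\pi_{\varphi}(b^{\ast})$, and then invoke minimality (density of $\mathrm{span}\{\pi_{\varphi}(\mathcal{A})V_{\varphi}\mathcal{D}\}$) together with boundedness of $T$ to conclude $T=0$. Your explicit restriction to a fixed $\mathcal{H}^{\varphi}_{\ell}$ is just a more careful writing of the domain bookkeeping the paper leaves implicit.
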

\begin{proof} Assume $\rvarphi_{T} =0.$
 Let $a_{i}, b_{j}\in \mathcal{A},\; g_{i}, h_{j}\in \mathcal{D}$, for $ i \in \{ 1, 2,3\cdots m\}$ and $ j \in \{ 1,2,3 \cdots n\}$, where $m,n \in \mathbb{N}$. Then
\begin{align*}
   \Big\langle \sum\limits_{j=1}^{n}\pi_{\varphi}(b_{j})V_{\varphi}h_{j},\; T\Big(\sum\limits_{i=1}^{m}\pi_{\varphi}(a_{i})V_{\varphi}g_{i}\Big)\Big\rangle &
   = \sum\limits_{j=1}^{n}\sum\limits_{i=1}^{m} \Big\langle \pi_{\varphi}(b_{j})V_{\varphi}h_{j},\; T\pi_{\varphi}(a_{i})V_{\varphi}g_{i}\Big\rangle\\
   &= \sum\limits_{j=1}^{n}\sum\limits_{i=1}^{m} \Big\langle h_{j},\; V_{\varphi}^{\ast}T\pi_{\varphi}(b_{j}^{\ast}a_{i})V_{\varphi}g_{i} \Big\rangle\\
   &= \sum\limits_{j=1}^{n}\sum\limits_{i=1}^{m} \Big\langle h_{j},\; \rvarphi_{T}(b_{j}^{\ast}a_{i})g_{i}\Big\rangle\\
   &=0.
\end{align*}
By the minimality of Stinespring representation, this implies that $T = 0$. The converse is trivial.
\end{proof}
We may think of $T$ as the Radon-Nikodym derivative of the map $\rvarphi_{T}$ with respect to $\varphi$. We consolidate our results in the following Theorem.
\begin{thm}\label{Theorem: MainTheroem1}
Let $\varphi, \psi \in \cpccloc$. Then $\psi \leq \varphi$ if and only if  there exists a unique  $T\in \pi_{\varphi}(\mathcal{A})^{\prime} \cap C^{\ast}_{\mathcal{E}^{\varphi}}(\mathcal{D}^{\varphi})$ such that $0 \leq T\leq I$ and 
\begin{equation}\label{RNTheorem}
    \psi(a) \subseteq  V_{\varphi}^{\ast}T\pi_{\varphi}(a)V_{\varphi},\; \text{for all}\; a \in \mathcal{A}.
\end{equation}
\end{thm}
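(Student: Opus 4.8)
The plan is to treat the two implications separately, with essentially all of the work in the forward (``only if'') direction. The reverse direction is already in hand: if such a $T$ exists with $0\le T\le I$, then $\psi$ coincides with the map $\varphi_T$ of Proposition~\ref{Proposition: phiTisCPandCC} (since \eqref{RNTheorem} is exactly \eqref{Equation: forphiT}), and that proposition delivers $\varphi_T\in\cpccloc$ together with $\varphi_T\le\varphi$; hence $\psi\le\varphi$. Uniqueness will follow at the very end from Lemma~\ref{Lemma: uniqueness}.

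For the forward direction, assume $\psi\le\varphi$ and fix the minimal Stinespring representation $\big(\pi_\varphi, V_\varphi, \{\mathcal{H}^\varphi;\mathcal{E}^\varphi;\mathcal{D}^\varphi\}\big)$. Following Arveson, the idea is to produce $T$ as the bounded operator implementing a sesquilinear form built from $\psi$. On the dense subspace $\Span\{\pi_\varphi(\mathcal{A})V_\varphi\mathcal{D}\}$ of $\mathcal{H}^\varphi$ I would define
\begin{equation*}
B\Big(\sum_i \pi_\varphi(a_i)V_\varphi h_i,\ \sum_j \pi_\varphi(b_j)V_\varphi g_j\Big) = \sum_{i,j}\big\langle h_i,\ \psi(a_i^*b_j)g_j\big\rangle,
\end{equation*}
where, by upward filtering, all the $h_i,g_j$ may be taken in a common $\mathcal{H}_\ell$. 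The two inequalities $0\le B(\eta,\eta)\le\langle\eta,\eta\rangle$ are the heart of the matter. For a single representation $\eta=\sum_j\pi_\varphi(b_j)V_\varphi g_j$ the matrix $[\,b_i^*b_j\,]$ equals $C^*C$ in $M_n(\mathcal{A})$ and is therefore $\alpha$-positive for every $\alpha$; applying local complete positivity of $\psi$ and of $\varphi-\psi$ (which is local CP by Definition~\ref{Definition: OrderOnCPCCloc}) gives $[\psi(b_i^*b_j)]\big|_{\mathcal{H}_\ell^n}\ge 0$ and $[(\varphi-\psi)(b_i^*b_j)]\big|_{\mathcal{H}_\ell^n}\ge 0$, which read exactly as $0\le B(\eta,\eta)\le\|\eta\|^2$ once one recognizes $\langle\eta,\eta\rangle=\sum_{i,j}\langle g_i,\varphi(b_i^*b_j)g_j\rangle$. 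Positivity of the formal form together with the Cauchy--Schwarz inequality then forces $B$ to vanish whenever either argument represents the zero vector, so $B$ descends to a well-defined bounded positive sesquilinear form with $0\le B\le I$; Riesz representation yields a unique $T\in\mathcal{B}(\mathcal{H}^\varphi)$ with $0\le T\le I$ and $B(\xi,\eta)=\langle\xi,T\eta\rangle$.

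It then remains to locate $T$ inside $\pi_\varphi(\mathcal{A})'\cap C^*_{\mathcal{E}^\varphi}(\mathcal{D}^\varphi)$ and to recover \eqref{RNTheorem}. To see that $T$ reduces each $\mathcal{H}_\ell^\varphi$ I would test $T\eta$ against vectors from the complementary pieces, using $\mathcal{H}_\ell^\varphi=[\pi_\varphi(\mathcal{A})V_\varphi\mathcal{H}_\ell]$ and $(\mathcal{H}_\ell^\varphi)^\perp=[\pi_\varphi(\mathcal{A})V_\varphi(\mathcal{H}_\ell^\perp\cap\mathcal{D})]$ from Lemma~\ref{Lemma: minimalityforperp}: for $\eta=\pi_\varphi(b)V_\varphi g$ and $\zeta=\pi_\varphi(c)V_\varphi f$ one computes $\langle\zeta,T\eta\rangle=\langle f,\psi(c^*b)g\rangle$, which vanishes in the relevant cases because $\psi(c^*b)\in C^*_{\mathcal{E}}(\mathcal{D})$ preserves both $\mathcal{H}_\ell$ and $\mathcal{H}_\ell^\perp\cap\mathcal{D}$. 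Hence $TP_\ell=P_\ell T$ for every $\ell$, placing $T$ in $C^*_{\mathcal{E}^\varphi}(\mathcal{D}^\varphi)$ and, in particular, leaving $\mathcal{D}^\varphi$ invariant. The commutation $T\pi_\varphi(a)\subseteq\pi_\varphi(a)T$ follows from the identity $\langle\xi,T\pi_\varphi(a)\eta\rangle=\langle\xi,\pi_\varphi(a)T\eta\rangle$ on the dense span, which in turn reduces via the definition of $B$ to the bookkeeping $\psi(c^*ab)=\psi((a^*c)^*b)$. Finally, taking $\eta=\pi_\varphi(1)V_\varphi g=V_\varphi g$ and $\xi=V_\varphi h$ recovers $\langle h,\psi(a)g\rangle=\langle h,V_\varphi^*T\pi_\varphi(a)V_\varphi g\rangle$, that is \eqref{RNTheorem}, while uniqueness of $T$ is immediate from Lemma~\ref{Lemma: uniqueness} applied to the difference of two candidates. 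I expect the principal difficulty to be the well-definedness and boundedness of $B$: one must be careful that the matrix-positivity arguments for $\psi$ and for $\varphi-\psi$ are applied with a single index $\ell$ and a single dominating $\alpha$ (legitimate by upward filtering), and that the passage from ``the form vanishes on representatives of $0$'' to ``the form is genuinely defined on vectors'' is handled through the positive-form Cauchy--Schwarz inequality rather than by a circular appeal to the operator $T$ we are trying to construct.
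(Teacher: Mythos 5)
Your proposal is correct, but it takes a genuinely different route from the paper's in the forward direction. The paper never builds a sesquilinear form: assuming $\psi\le\varphi$, it defines an intertwining contraction $S$ from $\mathcal{H}^{\varphi}$ to the \emph{second} Stinespring space $\mathcal{H}^{\psi}$ by $S\big(\sum_i\pi_{\varphi}(a_i)V_{\varphi}h_i\big)=\sum_i\pi_{\psi}(a_i)V_{\psi}h_i$, proves well-definedness and contractivity from the same domination inequality you use for $0\le B\le I$, establishes $S\pi_{\varphi}(a)\subseteq\pi_{\psi}(a)S$ and its adjoint relation, and then sets $T=S^{\ast}S$, so that commutation with $\pi_{\varphi}(\mathcal{A})$ and the inclusion $T\in C^{\ast}_{\mathcal{E}^{\varphi}}(\mathcal{D}^{\varphi})$ fall out of the intertwining identities; the identity $\psi(a)\subseteq V_{\varphi}^{\ast}T\pi_{\varphi}(a)V_{\varphi}$ then follows from $SV_{\varphi}=V_{\psi}$. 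Your Arveson-style construction instead produces $T$ directly on $\mathcal{H}^{\varphi}$ via the form $B$ and Riesz representation, and never invokes the Stinespring representation of $\psi$ at all; the price is that you must separately verify well-definedness through the positive-form Cauchy--Schwarz argument (which you handle correctly) and prove $TP_{\ell}=P_{\ell}T$ by hand, for which your appeal to Lemma \ref{Lemma: minimalityforperp} and to the fact that $\psi(c^{\ast}b)\in C^{\ast}_{\mathcal{E}}(\mathcal{D})$ preserves both $\mathcal{H}_{\ell}$ and $\mathcal{H}_{\ell}^{\bot}\cap\mathcal{D}$ is exactly the right mechanism (the paper uses that lemma only in Proposition \ref{Proposition: phiTisCPandCC}, i.e., in the converse direction, not here). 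Both routes then conclude uniqueness identically via Lemma \ref{Lemma: uniqueness}. One practical advantage of the paper's route that your proof forgoes: the intertwiner $S$ itself, not just $T=S^{\ast}S$, is recycled later in the proof of Theorem \ref{Theorem: RadonNikodymforHilbertmodules}, where the pair $T\oplus S$ (in the notation there) is needed to build the Radon--Nikodym derivative for CP-inducing maps; your construction yields only the derivative $T$, so that later argument would still require constructing the intertwiner separately.
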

\begin{proof} Suppose that  $T \in \pi_{\varphi}(\mathcal{A})^{\prime} \cap C^{\ast}_{\mathcal{E}^{\varphi}}(\mathcal{D}^{\varphi})$ be such that $0 \leq T\leq I$ and
$\psi $ is defined by equation \ref{RNTheorem},  then by Proposition \ref{Proposition: phiTisCPandCC}, it is clear that $\psi \leq \varphi$.
Now we prove the converse. Suppose that $\psi \leq \varphi$.  Let us define a map $S$ on the dense subset $\Span\{\pi_{\varphi}(\mathcal{A})V_{\varphi}(\mathcal{D})\}$ of $\mathcal{H}^{\varphi}$ by
\begin{equation}\label{Equation: Tphi}
    S\Big(\sum\limits_{i=1}^{n}\pi_{\varphi}(a_{i})V_{\varphi}h_{i}\Big) = \sum\limits_{i=1}^{n}\pi_{\psi}(a_{i})V_{\psi}h_{i},
    \end{equation}
    for $a_{i}\in \mathcal{A}, h_{i} \in \mathcal{D}, i = 1,2,3,\ldots , n, n \geq 1$. Since $\mathcal{D}$ is the union of $\mathcal { H}_{\ell }$'s, and we have a finite collection of $h_i$'s,  there exists an $\ell \in \Omega$ such that $h_{i} \in \mathcal{H}_{\ell}$ for every $i$. Further, by using the fact that  $V_{\varphi} (\mathcal{H}_{\ell}) \subseteq \mathcal{H}_{\ell}^{\varphi}, \; V_{\psi} (\mathcal{H}_{\ell}) \subseteq \mathcal{H}_{\ell}^{\psi}, \; \pi_{\varphi}(a_{i})\big|_{\mathcal{H}_{\ell}^{\varphi}} \in \mathcal{B}(\mathcal{H}_{\ell}^{\varphi})$ and $\pi_{\psi}(a_{i})\big|_{\mathcal{H}_{\ell}^{\psi}} \in \mathcal{B}(\mathcal{H}_{\ell}^{\psi}) $ for $i = 1,2,3 \ldots , n$, we see that
    \begin{align*}
        \Big\|S\Big(\sum\limits_{i=1}^{n}\pi_{\varphi}(a_{i})V_{\varphi}h_{i}\Big)\Big\|^{2}_{\mathcal{H}_{\ell}^{\psi}} &= \Big\|\sum\limits_{i=1}^{n}\pi_{\psi}(a_{i})V_{\psi}h_{i} \Big\|^{2}_{\mathcal{H}_{\ell}^{\psi}}\\
        &= \sum\limits_{i,j=1}^{n}\big\langle \pi_{\psi}(a_{i})V_{\psi}h_{i}, \pi_{\psi}(a_{j})V_{\psi}h_{j} \big\rangle_{\mathcal{H}_{\ell}^{\psi}}\\
        &= \sum\limits_{i,j=1}^{n}\big\langle h_{i}, V_{\psi}^{\ast}\pi_{\psi}(a_{i}^{\ast}a_{j})V_{\psi}h_{j} \big\rangle_{\mathcal{H}_{\ell}}\\
        &= \sum\limits_{i,j=1}^{n}\big\langle h_{i}, \psi(a_{i}^{\ast}a_{j})h_{j} \big\rangle_{\mathcal{H}_{\ell}}\\
        &= \Big\langle \begin{bmatrix}
        h_{1}\\
        \vdots\\
        h_{n}
        \end{bmatrix}, \; \psi^{(n)}\Big(\big[a_{i}^{\ast}a_{j}\big]_{n\times n}\Big)\begin{bmatrix}
        h_{1}\\
        \vdots\\
        h_{n}
        \end{bmatrix}
        \Big\rangle_{\mathcal{H}_{\ell}^{n}}.
    \end{align*}
    That is,
    \begin{equation}\label{Equation: Tleq}
        \Big\|S\Big(\sum\limits_{i=1}^{n}\pi_{\varphi}(a_{i})V_{\varphi}h_{i}\Big)\Big\|^{2}_{\mathcal{H}_{\ell}^{\psi}} = \Big\langle \begin{bmatrix}
        h_{1}\\
        \vdots\\
        h_{n}
        \end{bmatrix}, \; \psi^{(n)}\Big(\big[a_{i}^{\ast}a_{j}\big]_{n\times n}\Big)\begin{bmatrix}
        h_{1}\\
        \vdots\\
        h_{n}
        \end{bmatrix}
        \Big\rangle_{\mathcal{H}_{\ell}^{n}}.
    \end{equation}
    From the assumption of $\psi \leq \varphi$, it follows that for each $\ell \in \Omega$, there exist $\alpha \in \Lambda$ such that
    \begin{equation*}
        \varphi^{(n)}(A)- \psi^{(n)} (A)\big|_{\mathcal{H}_{\ell}^{n}} \geq 0, \; \text{ whenever}\; A \geq_{\alpha} 0\;\text{in}\; M_{n}(\mathcal{A}),
    \end{equation*}
    for every $n \in \mathbb{N}$. In particular, we have $[a_{i}^{\ast}a_{j}]_{n\times n} \geq_{\alpha} 0$ in $M_{n}(\mathcal{A})$, for every
    $\alpha \in \Omega, n \in \mathbb{N}$. This implies that
    \begin{equation}\label{Equation: psinleqvarphin}
    \Big\langle \begin{bmatrix}
        h_{1}\\
        \vdots\\
        h_{n}
        \end{bmatrix},  \; \psi^{(n)}\Big(\big[a_{i}^{\ast}a_{j}\big]_{n\times n}\Big)\begin{bmatrix}
        h_{1}\\
        \vdots\\
        h_{n}
        \end{bmatrix}
        \Big\rangle_{\mathcal{H}_{\ell}^{n}}
     \leq \Big \langle \begin{bmatrix}
        h_{1}\\
        \vdots\\
        h_{n}
        \end{bmatrix}, \; \varphi^{(n)}\Big(\big[a_{i}^{\ast}a_{j}\big]_{n\times n}\Big)\begin{bmatrix}
        h_{1}\\
        \vdots\\
        h_{n}
        \end{bmatrix}
        \Big \rangle_{\mathcal{H}_{\ell}^{n}}.
    \end{equation}
      Thus by Equations (\ref{Equation: Tleq}), (\ref{Equation: psinleqvarphin}), we get that
    \begin{align*}
        \Big\|S\Big(\sum\limits_{i=1}^{n}\pi_{\varphi}(a_{i})V_{\varphi}h_{i}\Big)\Big\|^{2}_{\mathcal{H}_{\ell}^{\psi}} &\leq \Big \langle \begin{bmatrix}
        h_{1}\\
        \vdots\\
        h_{n}
        \end{bmatrix}, \; \varphi^{(n)}\Big(\big[a_{i}^{\ast}a_{j}\big]_{n\times n}\Big)\begin{bmatrix}
        h_{1}\\
        \vdots\\
        h_{n}
        \end{bmatrix} \Big\rangle_{\mathcal{H}_{\ell}^{n}}\\
        &\leq \sum\limits_{i,j=1}^{n}\big\langle h_{i}, V_{\varphi}^{\ast}\pi_{\varphi}(a_{i}^{\ast}a_{j})V_{\varphi}h_{j} \big\rangle_{\mathcal{H}_{\ell}}\\
        &= \sum\limits_{i,j=1}^{n}\big\langle \pi_{\varphi}(a_{i})V_{\varphi}h_{i}, \pi_{\varphi}(a_{j})V_{\varphi}h_{j} \big\rangle_{\mathcal{H}_{\ell}^{\varphi}}\\
        &= \Big\|\sum\limits_{i=1}^{n}\pi_{\varphi}(a_{i})V_{\varphi}h_{i}\Big\|^{2}_{\mathcal{H}_{\ell}^{\varphi}},
    \end{align*}
    for every $\ell \in \Omega, n \in \mathbb{N}$. This shows that $S$ is well defined and is a contraction on the dense set $\Span\{\pi_{\varphi}(\mathcal{A})V_{\varphi}(\mathcal{D})\}$. Hence it can be extended to whole of $\mathcal{H}^{\varphi}$ and again we denote it by $S$. Then $S \in \mathcal{B}(\mathcal{H}^{\varphi}, \mathcal{H}^{\psi})$ is a contraction. Moreover, by the definition of $S$ and above observations, it follows that
    \begin{equation}\label{Equation: InvarianceOfT}
    S({\mathcal{H}^{\varphi}_{\ell}}) \subseteq {\mathcal{H}^{\psi}_{\ell}} \; \text{ and}\;  S\big|_{{\mathcal{H}^{\varphi}_{\ell}}} \in \mathcal{B}({\mathcal{H}^{\varphi}_{\ell}}, {\mathcal{H}^{\psi}_{\ell}}),\; \text{ for every} \; \ell \in \Omega.
    \end{equation}
    Let us take $T= S^{\ast}S$. It is clear that $0 \leq T \leq I$ and $T \in C^{\ast}_{\mathcal{E}^{\varphi}}(\mathcal{D}^{\varphi})$ by Equation (\ref{Equation: InvarianceOfT}). First, we show that $T \in\pi_{\varphi}(\mathcal{A})^{\prime}$ as follows:
    Let $a, a_{i} \in \mathcal{A}, h_{i}\in \mathcal{D}, i \in \{ 1,2,3 \hdots n\}$, where $n\in \mathbb{N}$. Then
\begin{align*}
    S\pi_{\varphi}(a) \Big(\sum\limits_{i = 1}^{n}\pi_{\varphi}(a_{i})V_{\varphi}h_{i}\Big)&= S \Big(\sum\limits_{i = 1}^{n}\pi_{\varphi}(aa_{i})V_{\varphi}h_{i}\Big)\\
    &= \sum\limits_{i=1}^{n} \pi_{\psi}(aa_{i})V_{\psi}h_{i}\\
    &= \pi_{\psi}(a)\Big(\sum\limits_{i=1}^{n} \pi_{\psi}(a_{i})V_{\psi}h_{i}\Big)\\
    &= \pi_{\psi}(a)S\Big(\sum\limits_{i=1}^{n} \pi_{\varphi}(a_{i})V_{\varphi}h_{i}\Big).
\end{align*}
By the continuity of $S$, we get that $S\pi_{\varphi}(a) = \pi_{\psi}(a)S$ on $\text{dom}(S\pi_{\varphi}(a)) = \mathcal{D}$, for all $a \in \mathcal{A}$. Since $\mathcal{D}\subseteq \text{dom} (\pi_{\psi}(a)S) = \{x \in \mathcal{H}^{\varphi}:\; Sx \in \mathcal{D}^{\psi}\}$, we have
\begin{equation}\label{Equation: TpiphicontainedinpipsiT}
    S\pi_{\varphi}(a) \subseteq \pi_{\psi}(a)S, \; \text{for all}\; a \in \mathcal{A}.
\end{equation}
Since $\pi_{\varphi}$ and $\pi_{\psi}$ are $\ast$-homomorphisms, by taking adjoints on both side of Equation (\ref{Equation: TpiphicontainedinpipsiT}), we have
\begin{equation}\label{Equation: adjointofabove}
    S^{\ast}\pi_{\psi}(a) \subseteq \pi_{\varphi}(a)S^{\ast}, \; \text{for all}\; a \in \mathcal{A}.
\end{equation}
From Equations (\ref{Equation: TpiphicontainedinpipsiT}), (\ref{Equation: adjointofabove}) it follows that
\begin{equation*}
     T\pi_{\varphi}(a) = S^{\ast}S\pi_{\varphi}(a)\subseteq S^{\ast}\pi_{\psi}(a)S\subseteq \pi_{\varphi}(a)S^{\ast}S = \pi_{\varphi}(a)T,
\end{equation*}
for all $a \in \mathcal{A}$. Equivalently, $T \in \pi_{\varphi}(\mathcal{A})^{\prime}$. Next, for every $a \in \mathcal{A}$ and $ h,k \in \mathcal{D}$, we have
\begin{align*}
    \Big\langle k, \; V_{\varphi}^{\ast}T\pi_{\varphi}(a)V_{\varphi}h\Big\rangle
    &= \Big\langle k, \; V_{\varphi}^{\ast}S^{\ast}S\pi_{\varphi}(a)V_{\varphi}h\Big\rangle\\
    &=\Big\langle SV_{\varphi}k, \;S\pi_{\varphi}(a)V_{\varphi}h\Big\rangle\\
    &= \Big\langle V_{\psi}k, \;\pi_{\psi}(a)V_{\psi}h\Big\rangle\\
    &= \Big\langle k, \; V_{\psi}^{\ast}\pi_{\psi}(a)V_{\psi}h\Big\rangle\\
    &= \Big\langle k, \; V_{\psi}^{\ast}\pi_{\psi}(a)V_{\psi}h\Big\rangle\\
    &=  \Big\langle k, \; \psi(a)h\Big\rangle.
\end{align*}
Therefore, $\psi(a) \subseteq  V_{\varphi}^{\ast}T\pi_{\varphi}(a)V_{\varphi}$, for all $a \in \mathcal{A}$. Furthermore, the uniqueness of $T$ follows from Lemma \ref{Lemma: uniqueness}. Hence the result.
\end{proof}
\begin{cor}\label{isomorphism} The map $\zeta \colon \big\{T \in \pi_{\varphi}(\mathcal{A})^{\prime}\cap C^{\ast}_{\mathcal{E}^{\varphi}}(\mathcal{D}^{\varphi}):\; 0 \leq T \leq I\big\} \longrightarrow
\{ \psi \in \cpccloc : 0\leq \psi \leq \varphi \} $
defined by
\begin{equation*}
    \zeta (T) = \rvarphi_{T},
\end{equation*} is an order isomorphism preserving convexity structure:
\begin{equation*}
    \zeta (pT_1+ (1-p)T_2)=p\zeta (T_1)+(1-p)\zeta (T_2)
\end{equation*} for $0\leq p\leq 1$ and $0\leq T_1, T_2\leq I$ in $ \pi_{\varphi}(\mathcal{A})^{\prime}\cap C^{\ast}_{\mathcal{E}^{\varphi}}(\mathcal{D}^{\varphi}) .$
\end{cor}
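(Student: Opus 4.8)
The plan is to read off all the required properties---bijectivity, preservation of convex combinations, and order preservation in both directions---from the machinery already assembled in Theorem \ref{Theorem: MainTheroem1}, Proposition \ref{Proposition: phiTisCPandCC} and Lemma \ref{Lemma: uniqueness}, together with the elementary fact that the assignment $T \mapsto \rvarphi_T$ is linear, which is visible directly from Equation (\ref{Equation: forphiT}).

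First I would check that $\zeta$ is well defined and bijective. Well-definedness is exactly Proposition \ref{Proposition: phiTisCPandCC}, which guarantees $\rvarphi_T \in \cpccloc$ and $\rvarphi_T \leq \varphi$ whenever $0 \leq T \leq I$ (the condition $0 \leq \rvarphi_T$ is automatic, since membership in $\cpccloc$ already means $\rvarphi_T - 0 \in \cpccloc$). Surjectivity is precisely the converse direction of Theorem \ref{Theorem: MainTheroem1}: for every $\psi$ with $\psi \leq \varphi$ there is a positive contraction $T \in \pi_\varphi(\mathcal{A})' \cap C^*_{\mathcal{E}^\varphi}(\mathcal{D}^\varphi)$ with $\psi = \rvarphi_T$. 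For injectivity I would use linearity in $T$: if $\rvarphi_{T_1} = \rvarphi_{T_2}$ then $\rvarphi_{T_1 - T_2} = 0$, and Lemma \ref{Lemma: uniqueness} forces $T_1 = T_2$.

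The convexity identity then follows with no positivity input, again by linearity in $T$:
\[
\rvarphi_{pT_1 + (1-p)T_2}(a) = V_\varphi^* \big(pT_1 + (1-p)T_2\big) \pi_\varphi(a) V_\varphi\big|_{\mathcal{D}} = p\,\rvarphi_{T_1}(a) + (1-p)\,\rvarphi_{T_2}(a).
\]
For order preservation in the forward direction, suppose $T_1 \leq T_2$. Then $0 \leq T_2 - T_1 \leq T_2 \leq I$, so Proposition \ref{Proposition: phiTisCPandCC} gives $\rvarphi_{T_2} - \rvarphi_{T_1} = \rvarphi_{T_2 - T_1} \in \cpccloc$, which by Definition \ref{Definition: OrderOnCPCCloc} is exactly $\rvarphi_{T_1} \leq \rvarphi_{T_2}$.

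The step I expect to be the main obstacle is order reflection: deducing $T_1 \leq T_2$ from $\rvarphi_{T_1} \leq \rvarphi_{T_2}$. Here I cannot directly recover the self-adjoint operator $S := T_2 - T_1$ through Theorem \ref{Theorem: MainTheroem1}, because $S$ is not yet known to be positive. Instead I would argue by minimality. The hypothesis gives $\rvarphi_S = \rvarphi_{T_2} - \rvarphi_{T_1} \in \cpccloc$, in particular $\rvarphi_S$ is local completely positive. For a typical vector $\xi = \sum_{i=1}^n \pi_\varphi(a_i) V_\varphi h_i$ of the dense span, with all $h_i \in \mathcal{H}_\ell$ for a common $\ell$ (possible since there are finitely many and $\mathcal{E}$ is upward filtered), the commutation $S \in \pi_\varphi(\mathcal{A})'$ together with $\pi_\varphi$ being a $\ast$-homomorphism lets me rewrite
\[
\langle \xi, S\xi\rangle = \sum_{i,j=1}^n \big\langle h_i,\, V_\varphi^* S \pi_\varphi(a_i^* a_j) V_\varphi h_j\big\rangle = \Big\langle (h_i)_{i=1}^n,\, \rvarphi_S^{(n)}\big([a_i^* a_j]\big)(h_j)_{j=1}^n\Big\rangle_{\mathcal{H}_\ell^n}.
\]
Since $[a_i^* a_j]_{n \times n}$ is positive in $M_n(\mathcal{A})$, hence $\alpha$-positive for every $\alpha$, local complete positivity of $\rvarphi_S$ forces the right-hand side to be nonnegative for each $\ell$. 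As $\xi$ ranges over a dense subset of $\mathcal{H}^\varphi$ and $S$ is bounded, this yields $S \geq 0$, that is $T_1 \leq T_2$. Combining the two directions shows $\zeta$ is an order isomorphism, completing the proof.
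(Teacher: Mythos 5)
Your proposal is correct, and for the parts the paper actually proves it follows the same route: bijectivity from Lemma \ref{Lemma: uniqueness} together with Theorem \ref{Theorem: MainTheroem1}, forward monotonicity by applying Proposition \ref{Proposition: phiTisCPandCC} to $T_2-T_1$ (the paper writes $S-T$), and the convexity identity from linearity of $T \mapsto \rvarphi_T$, which the paper dismisses as ``obvious from the definition of $\zeta$.'' The genuine difference is your final step. The paper's proof stops after showing $T_1 \leq T_2 \Rightarrow \zeta(T_1) \leq \zeta(T_2)$ and never addresses order reflection, i.e.\ that $\rvarphi_{T_1} \leq \rvarphi_{T_2}$ forces $T_1 \leq T_2$, which is what the word ``isomorphism'' (order-preserving bijection with order-preserving inverse) literally demands. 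Your argument for this step is sound: writing $S = T_2 - T_1$, the commutation $S \in \pi_{\varphi}(\mathcal{A})^{\prime}$ and the $\ast$-homomorphism property identify $\langle \xi, S\xi\rangle$, for $\xi = \sum_{i}\pi_{\varphi}(a_i)V_{\varphi}h_i$ with all $h_i$ in a common $\mathcal{H}_{\ell}$, with $\big\langle (h_i),\, \rvarphi_{S}^{(n)}([a_i^{\ast}a_j])(h_j)\big\rangle_{\mathcal{H}_{\ell}^{n}}$; the Gram matrix $[a_i^{\ast}a_j]$ is genuinely positive, hence $\alpha$-positive for every $\alpha$, so local complete positivity of $\rvarphi_S$ gives nonnegativity, and minimality plus boundedness of $S$ yield $S \geq 0$. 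You are also right that this cannot be short-circuited through Theorem \ref{Theorem: MainTheroem1}: applying it to $\psi = \rvarphi_{T_2}-\rvarphi_{T_1}$ would require $\psi \leq \varphi$ in the sense of Definition \ref{Definition: OrderOnCPCCloc}, i.e.\ that $\varphi - \psi = \rvarphi_{I-S}$ be local completely \emph{contractive}, and a priori $\|I-S\|$ can be as large as $2$. So your proof supplies a piece the paper's own proof leaves unjustified; in that sense it is more complete than the published argument.
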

\begin{proof} By Lemma \ref{Lemma: uniqueness} and Theorem \ref{Theorem: MainTheroem1}, it follows that the map $\zeta$ is bijective.  Next, we show that $\zeta$ is order preserving. Let $0\leq T \leq S\leq I$. Then $(S-T) \in \pi_{\varphi}(\mathcal{A})^{\prime}\cap C^{\ast}_{\mathcal{E}^{\varphi}}(\mathcal{D}^{\varphi})$, $0 \leq (S-T) \leq I$ and by Proposition \ref{Proposition: phiTisCPandCC}, we have
\begin{equation*}
    \zeta(S)-\zeta(T) = \rvarphi_{S} - \rvarphi_{T}= \rvarphi_{(S-T)} \in \cpccloc.
\end{equation*}
This shows that $\zeta(T) \leq \zeta(S)$. The second part is obvious from the definition of $\zeta .$
\end{proof}
\section{Stinespring's representation for local CP-inducing maps}
Now we turn our discussion to local CP-inducing maps on Hilbert modules over locally $C^{\ast}$-algebras. In this section, we prove Stinespring type representation for such maps. Let us recall the notion of local CP-inducing map on Hilbert modules over locally $C^{\ast}$-algebra (see Definition \ref{Definition: innerproductAmodule}). Let $E$ be a Hilbert module over a locally $C^{\ast}$-algebra $\mathcal{A}$ and let $\big\{\mathcal{H};\mathcal{E};\mathcal{D}\}$, $\{\mathcal{K}; \mathcal{F};\mathcal{O}\big\}$ be a quantized domains. A map $\Phi \colon E \to C^{\ast}_{\mathcal{E}, \mathcal{F}}(\mathcal{D}, \mathcal{O})$ is said to be \emph{local CP-inducing map} if there is a $\varphi \in \cpccloc$ such that
\begin{equation*}
    \big\langle \Phi(x), \Phi(y)\big\rangle = \varphi(\langle x, y\rangle), \; \text{for all}\; x,y \in E.
\end{equation*}
 Let us denote the class of local CP-inducing maps on $E$ by $C_{loc}(E, C^{*}_{\mathcal{E, F}}(\mathcal{D, O}))$.
From now on wards, we denote the local CP-inducing map $\Phi$ along with the associated $\varphi \in \cpccloc$ by the pair $(\varphi, \Phi)$. In this section, we prove Stinespring's theorem for local CP-inducing maps $(\varphi, \Phi)$ and discuss the minimality of such representations. This result can be seen as  a generalization of \cite[Theorem 2.1]{Bhat}.

The following lemma plays a key role in proving our result and its advantage may be noticed repeatedly.  Throughout this section, $\mathcal{A}$ denotes a  unital locally $C^{\ast}$-algebra and $E$ denotes a  Hilbert  $\mathcal{A}$-module.
\begin{lemma}\label{Lemma: Denseness}
     Let $\{\mathcal{H};\mathcal{E};\mathcal{D}\}$ and $\{\mathcal{K};\mathcal{F};\mathcal{O}\}$ be quantized domains in $\mathcal{H}$ and $\mathcal{K}$ respectively. Then for any complex linear map $\Phi \colon E \to C^{\ast}_{\mathcal{E}, \mathcal{F}}(\mathcal{D}, \mathcal{O})$ we have
     \begin{equation*}
         \overline{\bigcup\limits_{\ell \in \Omega}[\Phi(E)\mathcal{H}_{\ell}]} = [\Phi(E)\mathcal{D}].
     \end{equation*}
 \end{lemma}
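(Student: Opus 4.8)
The plan is to reduce the statement to the elementary fact that $\mathcal{D} = \bigcup_{\ell \in \Omega}\mathcal{H}_{\ell}$, in the same spirit as Lemma~\ref{Lemma: unionabsorption}, the extra ingredient being that $\mathcal{E} = \{\mathcal{H}_{\ell} : \ell \in \Omega\}$ is upward filtered and hence $\Omega$ is directed. Here $\Phi(E)\mathcal{H}_{\ell}$ denotes the set $\{\Phi(x)h : x \in E,\ h \in \mathcal{H}_{\ell}\} \subseteq \mathcal{K}$, which makes sense since each $\Phi(x) \in C^{\ast}_{\mathcal{E},\mathcal{F}}(\mathcal{D},\mathcal{O})$ maps $\mathcal{H}_{\ell}$ into $\mathcal{K}_{\ell}$. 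I would prove the two inclusions separately, noting that both sides of the claimed identity are closed subspaces of $\mathcal{K}$.

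For the inclusion $\overline{\bigcup_{\ell}[\Phi(E)\mathcal{H}_{\ell}]} \subseteq [\Phi(E)\mathcal{D}]$, observe that $\mathcal{H}_{\ell} \subseteq \mathcal{D}$ gives $\Phi(E)\mathcal{H}_{\ell} \subseteq \Phi(E)\mathcal{D}$ for every $\ell$, whence $[\Phi(E)\mathcal{H}_{\ell}] \subseteq [\Phi(E)\mathcal{D}]$. Taking the union over $\ell$ and then the closure, and using that $[\Phi(E)\mathcal{D}]$ is already closed, yields the desired containment.

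The reverse inclusion is where the (mild) work lies. It suffices to show that the algebraic span $\Span\{\Phi(E)\mathcal{D}\}$ is contained in $\overline{\bigcup_{\ell}[\Phi(E)\mathcal{H}_{\ell}]}$, since the latter is closed. A typical element of this span has the form $\xi = \sum_{i=1}^{n}\Phi(x_{i})h_{i}$ with $x_{i} \in E$ and $h_{i} \in \mathcal{D}$. For each $i$ choose $\ell_{i} \in \Omega$ with $h_{i} \in \mathcal{H}_{\ell_{i}}$; since $\Omega$ is directed there is a single $\ell \in \Omega$ with $\ell_{i} \leq \ell$ for all $i$, and upward-filteredness gives $\mathcal{H}_{\ell_{i}} \subseteq \mathcal{H}_{\ell}$, so that every $h_{i} \in \mathcal{H}_{\ell}$. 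Consequently $\xi \in \Span\{\Phi(E)\mathcal{H}_{\ell}\} \subseteq [\Phi(E)\mathcal{H}_{\ell}] \subseteq \bigcup_{\ell}[\Phi(E)\mathcal{H}_{\ell}]$, and passing to closures completes the proof.

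The only point requiring attention is this directedness device for absorbing the finitely many vectors $h_{1},\dots,h_{n}$ into a single subspace $\mathcal{H}_{\ell}$; everything else is formal. I do not anticipate any genuine obstacle, since complex linearity of $\Phi$ is invoked only to ensure that a generic element of $\Span\{\Phi(E)\mathcal{D}\}$ is a finite sum of the indicated shape.
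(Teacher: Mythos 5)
Your proof is correct and is essentially the paper's own argument: the paper disposes of this lemma with the single remark that $\mathcal{D} = \bigcup_{\ell \in \Omega}\mathcal{H}_{\ell}$, and your two inclusions (the trivial one, plus the directedness device absorbing finitely many $h_{i}$ into a single $\mathcal{H}_{\ell}$) are exactly the details left implicit there. No gap; you have simply written out what the paper calls clear.
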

 \begin{proof} This is clear as $\mathcal{D}= \bigcup\limits_{\ell \in \Omega }{\mathcal H}_{\ell }.$
 \end{proof}
\begin{thm}\label{Theorem: Stinespringforphimaps}
Let  $\Phi \colon E \to C^{\ast}_{\mathcal{E}, \mathcal{F}}(\mathcal{D}, \mathcal{O})$ be a local CP-inducing map associated to  $\varphi \in \cpccloc$. Then there exists a pair of triples
     \begin{equation*}
         \Big( \big(\pi_{\varphi}, V_{\varphi}, \{\mathcal{\mathcal{H}^{\varphi}};\mathcal{\mathcal{E}^{\varphi}}; \mathcal{D}^{\varphi}\}\big), \big(\rrho_{\Phi}, W_{\Phi}, \{\mathcal{K}^{\Phi};\mathcal{F}^{\Phi}; \mathcal{O}^{\Phi}\}\big)\Big),\; \text{where}\;
     \end{equation*}
\begin{enumerate}
\item $\{\mathcal{\mathcal{H}^{\varphi}};\mathcal{\mathcal{E}^{\varphi}}; \mathcal{D}^{\varphi}\}$ and $\{\mathcal{K}^{\Phi};\mathcal{F}^{\Phi}; \mathcal{O}^{\Phi}\}$ are quantized domains in Hilbert spaces $\mathcal{H}^{\varphi}$ and $\mathcal{K}^{\Phi}$ respectively,
\item $\pi_{\varphi} \colon \mathcal{A} \to C^{\ast}_{\mathcal{E}^{\varphi}}(\mathcal{D}^{\varphi})$ is a unital local contractive $\ast$-homomorphism and the map $\rrho_{\Phi} \colon E \to C^{\ast}_{\mathcal{E}^{\varphi}, \mathcal{F}^{\Phi}}(\mathcal{D}^{\varphi}, \mathcal{O}^{\Phi})$ is a $\pi_{\varphi}$-morphism,
\item $V_{\varphi} \colon \mathcal{H} \to \mathcal{H}^{\varphi}$ and $W _{\Phi}\colon \mathcal{K}\to \mathcal{K}^{\Phi} $ are contractions,
\end{enumerate}
such that
\begin{equation*}
 V_{\varphi}(\mathcal{E}) \subseteq \mathcal{E}^{\varphi},\;  \; \varphi(a) \subseteq V_{\varphi}^{\ast}\pi_{\varphi}(a)V_{\varphi} ,\; \text{for all} \; a \in \mathcal{A}
\end{equation*}
 and
 \begin{equation*}
 W_{\Phi}(\mathcal{F}) \subseteq \mathcal{F}^{\Phi},\;  \;\Phi(x)\subseteq W_{\Phi}^{\ast}\rrho_{\Phi}(x)V_{\varphi}, \; \text{for all}\; x \in E.
 \end{equation*}
 \end{thm}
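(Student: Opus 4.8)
The plan is to build the representation for the local CP-inducing map in two stages, mirroring the construction of Bhat--Ramesh--Sumesh but carried out in the quantized-domain setting. First, I would produce the Stinespring data $\big(\pi_{\varphi}, V_{\varphi}, \{\mathcal{H}^{\varphi};\mathcal{E}^{\varphi};\mathcal{D}^{\varphi}\}\big)$ for the underlying $\varphi\in\cpccloc$ simply by invoking Theorem \ref{Theorem: DosievStinespring} and then passing to the minimal version via Proposition \ref{Proposition: minimality Dosiev}; working with the minimal representation is what will later let me identify the fibre spaces $\mathcal{H}^{\varphi}_{\ell}=[\pi_{\varphi}(\mathcal{A})V_{\varphi}\mathcal{H}_{\ell}]$ cleanly. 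The substantive work is the construction of the second triple $\big(\rho_{\Phi}, W_{\Phi}, \{\mathcal{K}^{\Phi};\mathcal{F}^{\Phi};\mathcal{O}^{\Phi}\}\big)$ together with a $\pi_{\varphi}$-morphism $\rho_{\Phi}$ interwining $\Phi$.

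For the second stage I would work on the algebraic tensor product $E\odot\mathcal{D}$ and define a sesquilinear form by
\begin{equation*}
\big\langle x\otimes h,\; y\otimes k\big\rangle_{\ell} = \big\langle h,\; \varphi(\langle x,y\rangle) k\big\rangle_{\mathcal{H}_{\ell}},
\end{equation*}
for $h,k\in\mathcal{H}_{\ell}$. The key positivity input is that for a fixed $\ell$ there is an $\alpha$ with $\varphi^{(n)}$ $\ell$-positive on $\alpha$-positive matrices, so the Gram matrix $[\varphi(\langle x_i,x_j\rangle)]$ is positive when restricted to $\mathcal{H}_{\ell}$; this makes the form positive semidefinite at each level. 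Quotienting by the length-zero vectors and completing fibrewise yields Hilbert spaces $\mathcal{K}^{\Phi}_{\ell}:=[\rho_{\Phi}(E)V_{\varphi}\mathcal{H}_{\ell}]$, and one checks these are upward filtered (using that $\mathcal{E}$ is), so that $\mathcal{F}^{\Phi}=\{\mathcal{K}^{\Phi}_{\ell}\}$ is a quantized subdomain in $\mathcal{K}^{\Phi}:=\overline{\bigcup_{\ell}\mathcal{K}^{\Phi}_{\ell}}$ with union space $\mathcal{O}^{\Phi}$. I would then define $\rho_{\Phi}(x)$ on the dense span by $\rho_{\Phi}(x)\big(\sum_i \pi_{\varphi}(a_i)V_{\varphi}h_i\big)=\sum_i (x\otimes a_i\cdot h_i)$-type formulas, and verify the $\varphi$-map identity $\langle \rho_{\Phi}(x),\rho_{\Phi}(y)\rangle=\pi_{\varphi}(\langle x,y\rangle)$ by the same Gram-matrix computation; local contractivity of $\rho_{\Phi}(x)$ at each level follows because $\|\rho_{\Phi}(x)\xi\|^2_{\ell}$ is controlled by $\overline{p}_{\alpha}(x)^2\|\xi\|^2$ through the Cauchy--Schwarz inequality \eqref{Equation: Cauchy-Schwarz} and the contractivity of $V_{\varphi}$.

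To finish, I would define $W_{\Phi}\colon\mathcal{K}\to\mathcal{K}^{\Phi}$ as the natural contraction sending $\mathcal{K}_{\ell}$ into $\mathcal{K}^{\Phi}_{\ell}$ (obtained as a limit of the maps $k\mapsto$ the class of the appropriate elementary tensor), check $W_{\Phi}(\mathcal{F})\subseteq\mathcal{F}^{\Phi}$, and then verify the intertwining $\Phi(x)\subseteq W_{\Phi}^{\ast}\rho_{\Phi}(x)V_{\varphi}$ by testing against vectors in $\mathcal{O}$ and using the defining identity $\langle\Phi(x),\Phi(y)\rangle=\varphi(\langle x,y\rangle)$ together with the already-established relation $\varphi(a)\subseteq V_{\varphi}^{\ast}\pi_{\varphi}(a)V_{\varphi}$. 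I expect the \textbf{main obstacle} to be the bookkeeping of domains: one must be careful that all the operators $\rho_{\Phi}(x)$ map $\mathcal{K}^{\Phi}_{\ell}$ into itself and that $W_{\Phi}$ is compatible with the filtration, so that the containment symbols $\subseteq$ (rather than equalities) correctly reflect the unbounded-operator nature of the maps and the passage between $\mathcal{H}$ and $\mathcal{K}$ as domains. Establishing that $\rho_{\Phi}$ genuinely lands in $C^{\ast}_{\mathcal{E}^{\varphi},\mathcal{F}^{\Phi}}(\mathcal{D}^{\varphi},\mathcal{O}^{\Phi})$—i.e.\ that it preserves both the subspaces and their orthogonal complements fibrewise, an analogue of Lemma \ref{Lemma: minimalityforperp}—is where the delicate verification lies.
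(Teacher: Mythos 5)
Your proposal is correct in substance, and all of the analytic ingredients match the paper's: the first stage (invoking Theorem \ref{Theorem: DosievStinespring} and passing to the minimal representation via Proposition \ref{Proposition: minimality Dosiev}) is identical, and the key computations (positivity of the Gram matrix $\big[\varphi(\langle x_i,x_j\rangle)\big]$ on each $\mathcal{H}_{\ell}^{n}$, the bound by $\overline{p}_{\alpha}(x)^{2}$, and the identity $\rrho_{\Phi}(x)^{\ast}\rrho_{\Phi}(y)=\pi_{\varphi}(\langle x,y\rangle)$ on the dense span) are exactly those in the paper. Where you genuinely diverge is the construction of the second triple: you build $\mathcal{K}^{\Phi}_{\ell}$ abstractly, as the separated completion of $E\odot\mathcal{H}_{\ell}$ under the form $\langle x\otimes h,\,y\otimes k\rangle=\langle h,\varphi(\langle x,y\rangle)k\rangle$ (a KSGNS-type construction), whereas the paper works concretely inside the given target space, setting $\mathcal{K}^{\Phi}_{\ell}:=[\Phi(E)\mathcal{H}_{\ell}]\subseteq\mathcal{K}_{\ell}$ and defining $\rrho_{\Phi}(x)\big(\sum_{i}\pi_{\varphi}(a_{i})V_{\varphi}h_{i}\big):=\sum_{i}\Phi(xa_{i})h_{i}$. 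The concrete choice buys two things for free: $W_{\Phi}$ is simply the orthogonal projection of $\mathcal{K}$ onto $\mathcal{K}^{\Phi}$ (so contractivity and $W_{\Phi}W_{\Phi}^{\ast}=I_{\mathcal{K}^{\Phi}}$ are immediate), and the intertwining $W_{\Phi}^{\ast}\rrho_{\Phi}(x)V_{\varphi}h=\rrho_{\Phi}(x)\pi_{\varphi}(1)V_{\varphi}h=\Phi(x)h$ is a one-line computation. Your abstract route buys independence of any ambient realization, but it must then pay for the map back to $\mathcal{K}$.

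That payment is exactly where your write-up has a gap: the definition of $W_{\Phi}$ as ``a limit of the maps $k\mapsto$ the class of the appropriate elementary tensor'' does not parse, since a general $k\in\mathcal{K}_{\ell}$ is not of the form $\Phi(x)h$ and has no associated tensor. The repair is standard: the $\varphi$-map identity says precisely that $U[x\otimes h]:=\Phi(x)h$ extends to a well-defined isometry $U\colon\mathcal{K}^{\Phi}\to\mathcal{K}$ with range $[\Phi(E)\mathcal{D}]$, and you should set $W_{\Phi}:=U^{\ast}$. Then $W_{\Phi}W_{\Phi}^{\ast}=U^{\ast}U=I_{\mathcal{K}^{\Phi}}$, the intertwining follows as in the paper, and the filtration compatibility $W_{\Phi}(\mathcal{K}_{\ell})\subseteq\mathcal{K}^{\Phi}_{\ell}$ holds because $\Phi(y)\big(\mathcal{H}_{\ell}^{\perp}\cap\mathcal{D}\big)\subseteq\mathcal{K}_{\ell}^{\perp}$ forces $U\big(\mathcal{K}^{\Phi}\ominus\mathcal{K}^{\Phi}_{\ell}\big)=[\Phi(E)(\mathcal{H}_{\ell}^{\perp}\cap\mathcal{D})]\perp\mathcal{K}_{\ell}$ --- the same orthocomplement bookkeeping, via the analogue of Lemma \ref{Lemma: minimalityforperp}, that you correctly flag as the delicate point in showing $\rrho_{\Phi}$ lands in $C^{\ast}_{\mathcal{E}^{\varphi},\mathcal{F}^{\Phi}}(\mathcal{D}^{\varphi},\mathcal{O}^{\Phi})$ (a verification the paper itself leaves largely implicit). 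With this modification your construction is canonically unitarily equivalent to the paper's, and the remaining checks go through as you describe.
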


 \begin{proof} Since $\varphi \in \cpccloc$, by Theorem \ref{Theorem: DosievStinespring}, there is a Stinespring's representation for $\varphi$ denoted by the  triple $\big(\pi_{\varphi}, V_{\varphi}, \{\mathcal{H}^{\varphi};\mathcal{E}^{\varphi};
 \mathcal{D}^{\varphi}\}\big)$, where $\{\mathcal{H}^{\varphi};\mathcal{E}^{\varphi};
 \mathcal{D}^{\varphi}\}$ is a quantized domain in a Hilbert space $\mathcal{H}^{\varphi}$ with the upward filtered family $\mathcal{E}^{\varphi} = \{\mathcal{H}_{\ell}^{\varphi}:\; \ell \in \Omega\}$ and the union space $\mathcal{D}^{\varphi}$. Further, $V_{\varphi} \colon \mathcal{H} \to \mathcal{H}^{\varphi}$ is a contraction and $\pi_{\varphi} \colon \mathcal{A} \to
 C^{\ast}_{\mathcal{E}^{\varphi}}(\mathcal{D}^{\varphi})$ is a unital local contractive $\ast$-homomorphism such that
 	\begin{equation*}
 	V_{\varphi}(\mathcal{E}) \subseteq \mathcal{E}^{\varphi} \; \; \text{and}\; \; \varphi(a) \subseteq V_{\varphi}^{\ast}\pi_{\varphi}(a)V_{\varphi}, \; \text{for all}\; a \in \mathcal{A}.
 	\end{equation*}
With out loss of generality, by Proposition \ref{Proposition: minimality Dosiev}, we  consider  $\pi_{\varphi}$ to be minimal Stinespring representation for $\varphi$. That is, $\mathcal{H}_{\ell}^{\varphi} = [\pi_{\varphi}(\mathcal{A})V_{\varphi}\mathcal{H}_{\ell}]$, for every $\ell \in \Omega$ and consequently,
$ [\pi_{\varphi}(\mathcal{A})V_{\varphi}\mathcal{D}]= \mathcal{H}^{\varphi}.$
   Now we construct a quantized domain $\{\mathcal{K}^{\Phi};\mathcal{F}^{\Phi};\mathcal{O}^{\Phi}\}$ as follows:  Let us define
  \begin{equation*}
      \mathcal{K}^{\Phi}_{\ell}: = [\Phi(E)\mathcal{H}_{\ell}], \; \text{for each}\; \ell \in \Omega;
  \end{equation*}
  the union space $\mathcal{O}^{\Phi}:= \bigcup\limits_{\ell \in \Omega}\mathcal{K}^{\Phi}_{\ell}$ and the Hilbert space $\mathcal{K}^{\Phi}:=  \overline{\mathcal{O}^{\Phi}} $. Then the family denoted by $\mathcal{F}^{\Phi}:= \{\mathcal{K}_{\ell}^{\Phi}:\; \ell \in \Omega\}$ is an upward filtered family since
  \begin{equation*}
      \Phi(x)\xi \in \mathcal{H}_{\ell} \subseteq \mathcal{H}_{\ell^{\prime}}\; \text{for every}\; x\in E \; \text{and}\; \xi \in \mathcal{H}_{\ell}, \ell \leq \ell^{\prime}.
  \end{equation*}
  This  implies that
  $\{\mathcal{K}^{\Phi};\mathcal{F}^{\Phi};\mathcal{O}^{\Phi}\}$ is a quantized domain in the Hilbert space
  $\mathcal{K}^{\Phi}$.
  Note that $\mathcal{K}_{\ell}^{\Phi} \subseteq \mathcal{K_{\ell}}$ for every $\ell \in \Omega$ and so $\mathcal{K}^{\Phi} \subseteq \mathcal{K}$ by the construction.

 For each $x \in E$, we define $\rrho_{\Phi}(x) \colon \mathcal{D}^{\varphi} \to \mathcal{O}^{\Phi}$  by
 \begin{equation*}
     \rrho_{\Phi}(x)\Big(\sum\limits_{i=1}^{n}\pi_{\varphi}(a_{i})V_{\varphi}h_{i}\Big) = \sum\limits_{i=1}^{n}\Phi(xa_{i})h_{i},
 \end{equation*}
 for $i\in \{1,2,\cdots n\}, n\in \mathbb{N}$, $a_{i}\in \mathcal{A}$, $h_{i}\in \mathcal{H}_{\ell}$ for some $\ell \in \Omega$. First, we show that $\rrho_{\Phi}(x)$ is well defined, for all $x \in E$. For each $\ell \in \Omega$, we see that
 \begin{align*}
     \Big\| \rrho_{\Phi}(x)\big|_{\mathcal{H}_{\ell}^{\varphi}}\Big(\sum\limits_{i=1}^{n}\pi_{\varphi}(a_{i})V_{\varphi}h_{i}\Big)\Big\|^{2}_{\mathcal{K}^{\Phi}_{\ell}}
     &= \Big\| \Big(\sum\limits_{i=1}^{n}\Phi(xa_{i})h_{i}\Big)\Big\|^{2}_{\mathcal{K}^{\Phi}_{\ell}}\\
     &= \sum\limits_{i,j=1}^{n}\Big\langle \Phi(xa_{i})h_{i}, \Phi(xa_{j})h_{j} \Big\rangle_{\mathcal{K}^{\Phi}_{\ell}}\\
     &= \sum\limits_{i,j=1}^{n}\Big\langle h_{i}, \Phi(xa_{i})^{\ast}\Phi(xa_{j})h_{j} \Big\rangle_{\mathcal{H}_{\ell}}\\
     &= \sum\limits_{i,j=1}^{n}\Big\langle h_{i}, \varphi\big(a_{i}^{\ast}\langle x, x\rangle a_{j}\big)h_{j} \Big\rangle_{\mathcal{H}_{\ell}}\\
     &=\sum\limits_{i,j=1}^{n}\Big\langle h_{i}, V_{\varphi}^{\ast}\pi_{\varphi}\big(a_{i}^{\ast}\langle x, x\rangle a_{j}\big)V_{\varphi}h_{j} \Big\rangle_{\mathcal{H}_{\ell}}\\
     &=\sum\limits_{i,j=1}^{n}\Big\langle \pi_{\varphi}(a_{i})V_{\varphi} h_{i}, \pi_{\varphi}\big(\langle x, x\rangle \big)\pi_{\varphi}(a_{j})V_{\varphi}h_{j} \Big\rangle_{\mathcal{H}_{\ell}^{\varphi}}\\
     &=\Big\langle \sum\limits_{i=1}^{n} \pi_{\varphi}(a_{i})V_{\varphi} h_{i}, \pi_{\varphi}\big(\langle x, x\rangle \big) \sum\limits_{j=1}^{n} \pi_{\varphi}(a_{j})V_{\varphi}h_{j} \Big\rangle_{\mathcal{H}_{\ell}^{\varphi}}\\
     &\leq \Big\|\pi_{\varphi}\big(\langle x, x\rangle\big)|_{{\mathcal H}_{\ell }^{\varphi }}\Big\|\Big\| \sum\limits_{i=1}^{n} \pi_{\varphi}(a_{i})V_{\varphi} h_{i}\Big\|^{2}_{\mathcal{H}_{\ell}^{\varphi}}\\
     &\leq \|\langle x, x\rangle \|^{2}_{\alpha }\Big\| \sum\limits_{i=1}^{n} \pi_{\varphi}(a_{i})V_{\varphi} h_{i}\Big\|^{2}_{\mathcal{H}_{\ell}^{\varphi}},
 \end{align*}
for some $\alpha \in \Lambda$. This implies that $\rrho_{\Phi}(x)$ is well defined  and $\rrho_{\Phi}(x)\big|_{\mathcal{H}_{\ell}^{\varphi}} \in
 \mathcal{B}(\mathcal{H}_{\ell}^{\varphi}, \mathcal{K}_{\ell}^{\Phi})$, for all $x \in E$,  $\ell \in \Omega$. Equivalently,
 $\rrho_{\Phi}(x) \in C^{\ast}_{\mathcal{E}^{\varphi},\mathcal{F}^{\Phi}}(\mathcal{D}^{\varphi}, \mathcal{O}^{\Phi})$, for all
 $x \in E$. It is enough to show that $\rrho_{\Phi}$ is a $\pi_{\varphi}$-morphism. Let $x,y \in E$, $a_{i}, b_{j} \in \mathcal{A}$, $h_{i},
 g_{j}\in \mathcal{H}_{\ell}$, for $i\in \{1,2,3,\cdots n\}$, $j \in\{1,2,3,\cdots m\}$, $m,n \in \mathbb{N}$. Then
computations similar to the computations above yield,
\begin{align*}  & & \Big \langle \sum\limits_{i=1}^{n}\pi_{\varphi}(a_{i})V_{\varphi}h_{i}, \rrho_{\Phi}(x)^{\ast}\rrho_{\Phi}(y) \sum\limits_{j=1}^{n}\pi_{\varphi}(b_{j})V_{\varphi}g_{j} \Big\rangle_{\mathcal{K}_{\ell}^{\Phi}}\\
& &=
\Big\langle \sum\limits_{i=1}^{n}\pi_{\varphi}(a_{i})V_{\varphi}h_{i}, \; \pi_{\varphi}\big( \langle x,y\rangle\big)\sum\limits_{j=1}^{m}\pi_{\varphi}(b_{j})V_{\varphi}g_{j} \Big\rangle_{\mathcal{H}_{\ell}^{\varphi}}.
\end{align*}
%
 It follows that $\rrho_{\Phi}(x)^{\ast}\rrho_{\Phi}(y) = \pi_{\varphi}(\langle x,y \rangle)$ on the dense set span$\{\pi_{\varphi}(\mathcal{A})V_{\varphi}\mathcal{H}_{\ell}\}$ of $\mathcal{H}_{\ell}^{\varphi}$ for every $\ell \in \Omega$. Since $\mathcal{D} = \bigcup\limits_{\ell \in \Omega}\mathcal{H}_{\ell}^{\varphi}$, we conclude that $\rrho_{\Phi}$ is a $\pi_{\varphi}$-morphism. We know by the construction that $\mathcal{K}^{\Phi}$ is a closed subspace of $\mathcal{K}$. Let $W_{\Phi}$ be the orthogonal projection of $\mathcal{K}$ onto $\mathcal{K}^{\Phi}$ thought of as an operator from $\mathcal{K}$ to $\mathcal{K}^{\Phi}$. Then $W_{\Phi}W_{\Phi}^{\ast} = I_{\mathcal{K}^{\Phi}}$, the identity operator on $\mathcal{K}^{\Phi}$. Moreover, if  $x\in E$ and $h \in \mathcal{D}$, then $h \in \mathcal{H}_\ell$ for some $\ell \in \Omega$ and
 \begin{equation*}
     W_{\Phi}^{\ast}\rrho_{\Phi}(x)V_{\varphi}h = \rrho_{\Phi}(x)\Big(\pi_{\varphi}(1)V_{\varphi}h\Big)= \Phi(x)h.
 \end{equation*}
 This shows that $\Phi(x) \subseteq W_{\Phi}^{\ast}\rrho_{\Phi}(x)V_{\varphi}$, for all $x \in E$.
 \end{proof}
\begin{note}
Let $\Phi \in C_{loc}(E, C^{*}_{\mathcal{E, F}}(\mathcal{D, O}))$ with the associated $\varphi \in \cpccloc$. We say a  pair of triples $\Big( \big( \pi_{\varphi}, V_{\varphi}, \{\mathcal{H}^{\varphi};\mathcal{E}^{\varphi};\mathcal{D}^{\varphi}\}\big),\; \big( \rrho_{\Phi}, W_{\Phi}, \{\mathcal{K}^{\Phi};\mathcal{F}^{\Phi};\mathcal{O}^{\Phi}\}\big)\Big)$ is a \emph{Stinespring representation} for the pair $(\varphi, \Phi)$ if it satisfies conditions $(1), (2)$ and $(3)$ of Theorem \ref{Theorem: Stinespringforphimaps}.
\end{note}


\begin{defn} \label{Definition: minimalityforphimaps}We call that a pair of triples $$\Big( \big( \pi_{\varphi}, V_{\varphi}, \{\mathcal{H}^{\varphi};\mathcal{E}^{\varphi};\mathcal{D}^{\varphi}\}\big),\; \big( \rrho_{\Phi}, W_{\Phi}, \{\mathcal{K}^{\Phi};\mathcal{F}^{\Phi};\mathcal{O}^{\Phi}\}\big)\Big)$$ is  \emph{minimial} Stinespring representaton for the pair $(\varphi, \Phi)$ if the following conditions hold:
\begin{enumerate}
    \item $\mathcal{E}^{\varphi} \subseteq \mathcal{E}$ and $\mathcal{H}^{\varphi}_{\ell} =[\pi_{\varphi}(\mathcal{A})V_{\varphi}\mathcal{H} _{\ell }]$ for every $\ell \in \Omega ;$
    \item $\mathcal{F}^{\Phi} \subseteq \mathcal{F}$ and $\mathcal{K}^{\Phi }_{\ell }=[
    \rho _{\Phi }(\mathcal {A})V_{\varphi }\mathcal{H} _{\ell}]$ for every $\ell \in \Omega .$
\end{enumerate}
\end{defn}

 Now we show that given any Stinespring representation we can modify the spaces so as to get a minimal Stinespring representation.
 \begin{prop}
 Let $\Big( \big( \pi_{\varphi}, V_{\varphi}, \{\mathcal{H}^{\varphi};\mathcal{E}^{\varphi};\mathcal{D}^{\varphi}\}\big),\; \big( \rrho_{\Phi}, W_{\Phi}, \{\mathcal{K}^{\Phi};\mathcal{F}^{\Phi};\mathcal{O}^{\Phi}\}\big)\Big) $ be a Stinespring representation for the pair $(\varphi, \Phi)$. Then there is another Stinespring representation
 \begin{equation*}
 \Big( \big( \widetilde{\pi}_{\varphi}, \widetilde{V}_{\varphi}, \{\widetilde{\mathcal{H}}^{\varphi};\widetilde{\mathcal{E}}^{\varphi};\widetilde{\mathcal{D}}^{\varphi}\}\big),\; \big( \widetilde{\rrho}_{\Phi}, \widetilde{W}_{\Phi}, \{\widetilde{\mathcal{K}}^{\Phi};\widetilde{\mathcal{F}}^{\Phi};\widetilde{\mathcal{O}}^{\Phi}\}\big)\Big)
 \end{equation*}
 such that
\begin{enumerate}
\item $\widetilde{\mathcal{E}}^{\varphi} \subseteq \mathcal{E}^{\varphi}$ and $\widetilde{\mathcal{F}}^{\Phi} \subseteq \mathcal{F}^{\Phi}$;
\item $ \widetilde{\mathcal{H}}^{\varphi} = [\widetilde{\pi}_{\varphi}(\mathcal{A})\widetilde{V}_{\varphi}{\mathcal{D}}] \;\; \text{and}\;\; \widetilde{\mathcal{K}}^{\Phi} = [\widetilde{\rrho}_{\Phi}(E)\widetilde{V}_{\varphi}{\mathcal{D}}]$.
\end{enumerate}
\end{prop}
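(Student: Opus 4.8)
The plan is to run the single-map reduction of Proposition \ref{Proposition: minimality Dosiev} on the algebra side and then carry out the parallel reduction on the module side, transporting everything through the $\pi_{\varphi}$-morphism property of $\rrho_{\Phi}$. First I would apply Proposition \ref{Proposition: minimality Dosiev} verbatim to the first triple: set $\widetilde{\mathcal{H}}^{\varphi}_{\ell}:=[\pi_{\varphi}(\mathcal{A})V_{\varphi}\mathcal{H}_{\ell}]$, $\widetilde{\mathcal{D}}^{\varphi}:=\bigcup_{\ell}\widetilde{\mathcal{H}}^{\varphi}_{\ell}$, $\widetilde{\mathcal{H}}^{\varphi}:=\overline{\widetilde{\mathcal{D}}^{\varphi}}$, $\widetilde{V}_{\varphi}:=V_{\varphi}$ and $\widetilde{\pi}_{\varphi}(a):=\pi_{\varphi}(a)\big|_{\widetilde{\mathcal{D}}^{\varphi}}$. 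This already yields the minimal first triple, with $\widetilde{\mathcal{E}}^{\varphi}\subseteq\mathcal{E}^{\varphi}$, $\widetilde{V}_{\varphi}(\mathcal{E})\subseteq\widetilde{\mathcal{E}}^{\varphi}$, $\varphi(a)\subseteq\widetilde{V}_{\varphi}^{\ast}\widetilde{\pi}_{\varphi}(a)\widetilde{V}_{\varphi}$ and $\widetilde{\mathcal{H}}^{\varphi}_{\ell}=[\widetilde{\pi}_{\varphi}(\mathcal{A})\widetilde{V}_{\varphi}\mathcal{H}_{\ell}]$, i.e.\ condition $(1)$ of Definition \ref{Definition: minimalityforphimaps}.

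Before touching the module I would record the module identity $\rrho_{\Phi}(xa)=\rrho_{\Phi}(x)\pi_{\varphi}(a)$ for all $x\in E$, $a\in\mathcal{A}$: expanding the inner product $\langle \rrho_{\Phi}(xa)-\rrho_{\Phi}(x)\pi_{\varphi}(a),\,\rrho_{\Phi}(xa)-\rrho_{\Phi}(x)\pi_{\varphi}(a)\rangle$ using the $\pi_{\varphi}$-map relation $\rrho_{\Phi}(u)^{\ast}\rrho_{\Phi}(v)=\pi_{\varphi}(\langle u,v\rangle)$ together with the fact that $\pi_{\varphi}$ is a $\ast$-homomorphism, every term collapses to $\pi_{\varphi}(a^{\ast}\langle x,x\rangle a)$, so the inner product vanishes and positive-definiteness (property $(1)$ of Definition \ref{Definition: innerproductAmodule}) gives the identity. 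Now define $\widetilde{\mathcal{K}}^{\Phi}_{\ell}:=[\rrho_{\Phi}(E)V_{\varphi}\mathcal{H}_{\ell}]$, $\widetilde{\mathcal{O}}^{\Phi}:=\bigcup_{\ell}\widetilde{\mathcal{K}}^{\Phi}_{\ell}$, $\widetilde{\mathcal{K}}^{\Phi}:=\overline{\widetilde{\mathcal{O}}^{\Phi}}$; these are nested closed subspaces of the $\mathcal{K}^{\Phi}_{\ell}$, so $\widetilde{\mathcal{F}}^{\Phi}:=\{\widetilde{\mathcal{K}}^{\Phi}_{\ell}\}$ is an upward filtered family with $\widetilde{\mathcal{F}}^{\Phi}\subseteq\mathcal{F}^{\Phi}$, giving a quantized subdomain. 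The identity shows $\rrho_{\Phi}(x)\big(\pi_{\varphi}(a)V_{\varphi}h\big)=\rrho_{\Phi}(xa)V_{\varphi}h\in\widetilde{\mathcal{K}}^{\Phi}_{\ell}$ for $h\in\mathcal{H}_{\ell}$; since $\rrho_{\Phi}(x)\big|_{\mathcal{H}^{\varphi}_{\ell}}$ is bounded it maps $\widetilde{\mathcal{H}}^{\varphi}_{\ell}$ into $\widetilde{\mathcal{K}}^{\Phi}_{\ell}$, so $\widetilde{\rrho}_{\Phi}(x):=\rrho_{\Phi}(x)\big|_{\widetilde{\mathcal{D}}^{\varphi}}$ lies in $C^{\ast}_{\widetilde{\mathcal{E}}^{\varphi},\widetilde{\mathcal{F}}^{\Phi}}(\widetilde{\mathcal{D}}^{\varphi},\widetilde{\mathcal{O}}^{\Phi})$, and restricting $\rrho_{\Phi}(x)^{\ast}\rrho_{\Phi}(y)=\pi_{\varphi}(\langle x,y\rangle)$ to $\widetilde{\mathcal{D}}^{\varphi}$ shows $\widetilde{\rrho}_{\Phi}$ is a $\widetilde{\pi}_{\varphi}$-morphism.

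I would then take $\widetilde{W}_{\Phi}$ to be the orthogonal projection of $\mathcal{K}$ onto $\widetilde{\mathcal{K}}^{\Phi}$, viewed as a contraction $\mathcal{K}\to\widetilde{\mathcal{K}}^{\Phi}$, so that $\widetilde{W}_{\Phi}^{\ast}$ is the inclusion. For $h\in\mathcal{D}$ the given relation $\Phi(x)\subseteq W_{\Phi}^{\ast}\rrho_{\Phi}(x)V_{\varphi}$ forces $\Phi(x)h=\rrho_{\Phi}(x)V_{\varphi}h\in\widetilde{\mathcal{K}}^{\Phi}_{\ell}$, whence $\Phi(x)\subseteq\widetilde{W}_{\Phi}^{\ast}\widetilde{\rrho}_{\Phi}(x)\widetilde{V}_{\varphi}$. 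The hard part will be the compatibility $\widetilde{W}_{\Phi}(\mathcal{F})\subseteq\widetilde{\mathcal{F}}^{\Phi}$, i.e.\ that this projection sends $\mathcal{K}_{\ell}$ into $\widetilde{\mathcal{K}}^{\Phi}_{\ell}$. For this I would establish a module analogue of Lemma \ref{Lemma: minimalityforperp}, namely that the orthocomplement of $\widetilde{\mathcal{K}}^{\Phi}_{\ell}$ inside $\widetilde{\mathcal{K}}^{\Phi}$ equals $[\rrho_{\Phi}(E)V_{\varphi}(\mathcal{H}_{\ell}^{\perp}\cap\mathcal{D})]$, and that each such vector is orthogonal to $\mathcal{K}_{\ell}$; the latter combines $V_{\varphi}(\mathcal{H}_{\ell}^{\perp}\cap\mathcal{D})\subseteq(\widetilde{\mathcal{H}}^{\varphi}_{\ell})^{\perp}$ (from Lemma \ref{Lemma: minimalityforperp}) with the defining orthogonality property $\widetilde{\rrho}_{\Phi}(x)\big((\widetilde{\mathcal{H}}^{\varphi}_{\ell})^{\perp}\cap\widetilde{\mathcal{D}}^{\varphi}\big)\subseteq(\widetilde{\mathcal{K}}^{\Phi}_{\ell})^{\perp}\cap\widetilde{\mathcal{O}}^{\Phi}$ built into $C^{\ast}_{\widetilde{\mathcal{E}}^{\varphi},\widetilde{\mathcal{F}}^{\Phi}}(\widetilde{\mathcal{D}}^{\varphi},\widetilde{\mathcal{O}}^{\Phi})$. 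Keeping straight the two ambient orthocomplements, in $\mathcal{K}$ versus in $\widetilde{\mathcal{K}}^{\Phi}$, is where the genuine care lies.

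Finally, minimality condition $(2)$ of Definition \ref{Definition: minimalityforphimaps} holds by construction, since $\widetilde{\mathcal{K}}^{\Phi}_{\ell}=[\widetilde{\rrho}_{\Phi}(E)\widetilde{V}_{\varphi}\mathcal{H}_{\ell}]$, and passing to unions and closures via Lemma \ref{Lemma: Denseness} (applied to $\pi_{\varphi}$ exactly as in Lemma \ref{Lemma: unionabsorption}, and to the map $x\mapsto\rrho_{\Phi}(x)V_{\varphi}$) gives $\widetilde{\mathcal{H}}^{\varphi}=[\widetilde{\pi}_{\varphi}(\mathcal{A})\widetilde{V}_{\varphi}\mathcal{D}]$ and $\widetilde{\mathcal{K}}^{\Phi}=[\widetilde{\rrho}_{\Phi}(E)\widetilde{V}_{\varphi}\mathcal{D}]$. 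Collecting the two triples then produces the required minimal Stinespring representation for $(\varphi,\Phi)$.
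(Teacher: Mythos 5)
Your construction is the same as the paper's: apply Proposition \ref{Proposition: minimality Dosiev} to the algebra triple, set $\widetilde{\mathcal{K}}^{\Phi}_{\ell}=[\rrho_{\Phi}(E)V_{\varphi}\mathcal{H}_{\ell}]$, restrict $\rrho_{\Phi}$ to $\widetilde{\mathcal{D}}^{\varphi}$, and take $\widetilde{W}_{\Phi}$ to be the orthogonal projection onto $\widetilde{\mathcal{K}}^{\Phi}$. Two of your additions actually go beyond what is written in the paper: you derive the module identity $\rrho_{\Phi}(xa)=\rrho_{\Phi}(x)\pi_{\varphi}(a)$ explicitly (the paper uses it silently, both when checking that $\rrho_{\Phi}(x)$ maps $\widetilde{\mathcal{H}}^{\varphi}_{\ell}$ into $\widetilde{\mathcal{K}}^{\Phi}_{\ell}$ and in the final relation $\widetilde{W}^{\ast}_{\Phi}\widetilde{\rrho}_{\Phi}(x)\widetilde{\pi}_{\varphi}(1)V_{\varphi}h=\Phi(x)h$), and you notice that the compatibility $\widetilde{W}_{\Phi}(\mathcal{F})\subseteq\widetilde{\mathcal{F}}^{\Phi}$, which is part of being a Stinespring representation by Theorem \ref{Theorem: Stinespringforphimaps}, is not verified in the paper's proof at all. (Like the paper, you do tacitly assume $\mathcal{K}^{\Phi}\subseteq\mathcal{K}$ with $W_{\Phi}$ the orthogonal projection; this holds for the representation constructed in Theorem \ref{Theorem: Stinespringforphimaps} but is not literally part of the abstract definition, so strictly one should say the given representation may be taken of that concrete form.)

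However, your proposed justification of the compatibility step does not work as stated. The two ingredients you cite --- $V_{\varphi}(\mathcal{H}_{\ell}^{\perp}\cap\mathcal{D})\subseteq(\widetilde{\mathcal{H}}^{\varphi}_{\ell})^{\perp}$ from Lemma \ref{Lemma: minimalityforperp}, and the invariance $\widetilde{\rrho}_{\Phi}(x)\big((\widetilde{\mathcal{H}}^{\varphi}_{\ell})^{\perp}\cap\widetilde{\mathcal{D}}^{\varphi}\big)\subseteq(\widetilde{\mathcal{K}}^{\Phi}_{\ell})^{\perp}\cap\widetilde{\mathcal{O}}^{\Phi}$ --- only show that the vectors $\rrho_{\Phi}(x)V_{\varphi}g$ with $g\in\mathcal{H}_{\ell}^{\perp}\cap\mathcal{D}$ are orthogonal to $\widetilde{\mathcal{K}}^{\Phi}_{\ell}$, i.e.\ orthogonality computed inside $\widetilde{\mathcal{K}}^{\Phi}$. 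What the projection argument needs is orthogonality of these vectors to $\mathcal{K}_{\ell}$, a subspace of the ambient space $\mathcal{K}$, and that is a different statement; your second ingredient never involves $\mathcal{K}_{\ell}$ (and, moreover, that invariance is itself part of what must be proved when you assert $\widetilde{\rrho}_{\Phi}(x)\in C^{\ast}_{\widetilde{\mathcal{E}}^{\varphi},\widetilde{\mathcal{F}}^{\Phi}}(\widetilde{\mathcal{D}}^{\varphi},\widetilde{\mathcal{O}}^{\Phi})$, so quoting it as ``built in'' is mildly circular). The repair is simpler than your route and uses facts you already have: by your own identification $\rrho_{\Phi}(x)V_{\varphi}g=\Phi(x)g$ for all $g\in\mathcal{D}$, and since membership $\Phi(x)\in C^{\ast}_{\mathcal{E},\mathcal{F}}(\mathcal{D},\mathcal{O})$ includes $\Phi(x)(\mathcal{H}_{\ell}^{\perp}\cap\mathcal{D})\subseteq\mathcal{K}_{\ell}^{\perp}\cap\mathcal{O}$, every generator of $[\rrho_{\Phi}(E)V_{\varphi}(\mathcal{H}_{\ell}^{\perp}\cap\mathcal{D})]$ lies in $\mathcal{K}_{\ell}^{\perp}$. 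Combining this with your orthocomplement identification $(\widetilde{\mathcal{K}}^{\Phi}_{\ell})^{\perp}\cap\widetilde{\mathcal{K}}^{\Phi}=[\rrho_{\Phi}(E)V_{\varphi}(\mathcal{H}_{\ell}^{\perp}\cap\mathcal{D})]$ then gives $\widetilde{W}_{\Phi}(\mathcal{K}_{\ell})\subseteq\widetilde{\mathcal{K}}^{\Phi}_{\ell}$ exactly as you intended; with that change your proof is complete and otherwise coincides with the paper's.
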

\begin{proof}
Consider $\big( \pi_{\varphi}, V_{\varphi}, \{\mathcal{H}^{\varphi};\mathcal{E}^{\varphi};\mathcal{D}^{\varphi}\}\big)$ is a Stinespring representation for $\varphi$. By Proposition \ref{Proposition: minimality Dosiev}, there exists another Stinespring representation  $\big( \widetilde{\pi}_{\varphi}, \widetilde{V}_{\varphi}, \{\widetilde{\mathcal{H}}^{\varphi};\widetilde{\mathcal{E}}^{\varphi};\widetilde{\mathcal{D}}^{\varphi}\} \big)$ such that
 $   \widetilde{\mathcal{H}}^{\varphi} = [\widetilde{\pi}_{\varphi}(\mathcal{A})\widetilde{V}_{\varphi}\mathcal{D}].$
We recall from the construction of this representation that $\widetilde{\mathcal{H}}_{\ell}^{\varphi} = [{\pi}_{\varphi}(\mathcal{A}){V}_{\varphi}\mathcal{H}_{\ell}]$ for every $\ell \in \Omega$, $\widetilde{\mathcal{E}}^{\varphi}= \{\widetilde{\mathcal{H}}^{\varphi}_{\ell}:\; \ell \in \Omega\}$ is an upward filtered family with the union space $\widetilde{\mathcal{D}}^{\varphi} = \bigcup\limits_{\ell \in \Omega}\widetilde{\mathcal{H}}_{\ell}^{\varphi} $ is dense in $\widetilde{\mathcal{H}}^{\varphi}$. Clearly, $\widetilde{\mathcal{E}}^{\varphi} \subseteq \mathcal{E}^{\varphi}$. Also $\widetilde{V}_{\varphi} = V_{\varphi}$ and $\widetilde{\pi}_{\varphi}(a) = \pi_{\varphi}(a)\big|_{\widetilde{\mathcal{D}}^{\varphi}}$, for all $a \in \mathcal{A}$.

Similarly, let us define $\widetilde{\mathcal{K}}_{\ell}^{\Phi}:= [\rrho_{\Phi}(E)V_{\varphi}\mathcal{H}_{\ell}]$, for every $\ell \in \Omega$ and $\widetilde{\mathcal{O}}^{\Phi}:= \bigcup\limits_{\ell \in \Omega}\widetilde{\mathcal{K}}^{\Phi}_{\ell}$. Then $\widetilde{\mathcal{K}}^{\Phi}_{\ell} \subseteq \mathcal{K}^{\Phi}_{\ell}$ and ${\widetilde{K}}^{\Phi}_{\ell} \subseteq {\widetilde{K}}^{\Phi}_{\ell^{\prime}}$, for each $\ell \leq \ell^{\prime}$. It implies that $\widetilde{\mathcal{F}}^{\Phi} \subseteq {\mathcal{F}}^{\Phi}$ and $\widetilde{\mathcal{F}}^{\Phi}:= \{\widetilde{\mathcal{K}}^{\Phi}_{\ell}:\; \ell \in \Omega\}$ is an upward filtered family in the Hilbert space $\widetilde{\mathcal{K}}^{\Phi}:= \overline{\widetilde{\mathcal{O}}^{\Phi}}$. As a result, $\big\{\widetilde{\mathcal{K}}^{\Phi};\widetilde{\mathcal{F}}^{\Phi};\widetilde{\mathcal{O}}^{\Phi}\big\}$ is a quantized domain in $\widetilde{\mathcal{K}}^{\Phi}$.  Define $\widetilde{\rrho}_{\Phi}(x)\colon \widetilde{\mathcal{D}}^{\varphi} \to \widetilde{\mathcal{O}}^{\Phi}$ by
\begin{equation*}
    \widetilde{\rrho}_{\Phi}(x):= \rrho_{\Phi}(x)\big|_{\widetilde{\mathcal{D}}^{\varphi}},\; \text{for all}\; x \in E.
\end{equation*}
Clearly, $\widetilde{\rrho}_{\Phi}(x)\big|_{\widetilde{\mathcal{H}}_{\ell}^{\varphi}} \in \mathcal{B}\big(\widetilde{\mathcal{H}}_{\ell}^{\varphi}, \widetilde{\mathcal{K}}_{\ell}^{\Phi}\big)$ for each $\ell \in \Omega$. This implies that the following map
$$\widetilde{\rrho}_{\Phi}\colon E \to C^{\ast}_{\widetilde{\mathcal{E}}^{\varphi}, \widetilde{\mathcal{F}}^{\Phi}}(\widetilde{\mathcal{D}}^{\varphi}, \widetilde{\mathcal{O}}^{\Phi})$$ is well defined. Also $\widetilde{\rrho}_{\Phi}$ is a $\widetilde{\pi}_{\varphi}$-morphism since
\begin{align*}
    \Big\langle \pi_{\varphi}(a)V_{\varphi}h,\; \widetilde{\rrho}_{\Phi}(x)^{\ast}\;\widetilde{\rrho}_{\Phi}(y)\big(\pi_{\varphi}(b)V_{\varphi}g\big)\Big\rangle_{\widetilde{\mathcal{K}}_{\ell}^{\Phi}}
    &= \Big\langle \widetilde{\rrho}_{\Phi}(x) \pi_{\varphi}(a)V_{\varphi}h,\;\widetilde{\rrho}_{\Phi}(y)\big(\pi_{\varphi}(b)V_{\varphi}g\big)\Big\rangle_{\widetilde{\mathcal{K}}_{\ell}^{\Phi}}\\
    &= \Big\langle \Phi(xa)h,\;\Phi(yb)h \Big\rangle_{{\mathcal{K}}_{\ell}} \\
    &= \Big\langle h,\;\Phi(xa)^{\ast}\Phi(yb)g \Big\rangle_{{\mathcal{H}}_{\ell}}\\
    &= \Big\langle h,\;{\varphi}(a^{\ast}\langle x, y\rangle b)g \Big\rangle_{{\mathcal{H}}_{\ell}}\\
    &=\Big\langle \pi_{\varphi}(a)V_{\varphi}h,\;\pi_{\varphi}\big(\langle x, y\rangle \big) \pi_{\varphi}(b)V_{\varphi}g \Big\rangle_{\widetilde{\mathcal{H}}_{\ell}^{\varphi}},
\end{align*}
for every $a,b \in \mathcal{A}$, $g,h  \in \mathcal{H}_{\ell}, \ell \in \Omega$ and $x,y \in E$. Let $\widetilde{W}_{\Phi}$ be the orthogonal projection of $\mathcal{K}$ onto $\widetilde{\mathcal{K}}^{\Phi}$. If $h \in \mathcal{D}$, then $h \in \mathcal{H}_{\ell}$ for some $\ell \in \Omega$ and
\begin{equation*}
    \widetilde{W}^{\ast}_{\Phi}\widetilde{\rrho}_{\Phi}(x)\widetilde{V}_{\varphi}h = \widetilde{\rrho}_{\Phi}(x)\widetilde{\pi}_{\varphi}(1){V}_{\varphi}h = \Phi(x)h.
\end{equation*}
Thus $\Phi(x) \subseteq \widetilde{W}^{\ast}_{\Phi}\widetilde{\rrho}_{\Phi}(x)\widetilde{V}_{\varphi}$, for all $x \in E$. Therefore, the pair of triples $$\Big( \big( \widetilde{\pi}_{\varphi}, \widetilde{V}_{\varphi}, \{\widetilde{\mathcal{H}}^{\varphi};\widetilde{\mathcal{E}}^{\varphi};\widetilde{\mathcal{D}}^{\varphi}\}\big),\; \big( \widetilde{\rrho}_{\Phi}, \widetilde{W}_{\Phi}, \{\widetilde{\mathcal{K}}^{\Phi};\widetilde{\mathcal{F}}^{\Phi};\widetilde{\mathcal{O}}^{\Phi}\}\big)\Big)$$ is the desired Stinespring representation for $(\varphi, \Phi)$.
\end{proof}

 Now, we show that any two minimal Stinespring representations for the pair $(\varphi, \Phi)$ are unitary equivalant.
 \begin{thm}\label{Theorem: uniqunessforphimaps} Suppose that $\Big(\big(\widetilde{\pi}_{\varphi}, \widetilde{V}_{\varphi}, \{\widetilde{\mathcal{H}}^{\varphi}; \widetilde{\mathcal{E}}^{\Phi}; \widetilde{D}^{\varphi}\}\big),\; \big(\widetilde{\rrho}_{\Phi}, \widetilde{W}_{\Phi}, \{\widetilde{\mathcal{K}}^{\Phi};\widetilde{\mathcal{F}}^{\Phi};\widetilde{\mathcal{O}}^{\Phi}\}\big) \Big)$ \; and \\ $\Big(\big(\widehat{\pi}_{\varphi}, \widehat{V}_{\varphi}, \{\widehat{\mathcal{H}}^{\varphi}; \widehat{\mathcal{E}}^{\Phi}; \widehat{D}^{\varphi}\}\big),\; \big(\widehat{\rrho}_{\Phi}, \widehat{W}_{\Phi}, \{\widehat{\mathcal{K}}^{\Phi};\widehat{\mathcal{F}}^{\Phi};\widehat{\mathcal{O}}^{\Phi}\}\big) \Big)$ are  two minimal Stinespring representations   for the pair $(\varphi, \Phi)$. Then there exist unitary operators $U_{\varphi} \colon \widetilde{\mathcal{H}}^{\varphi} \to \widehat{\mathcal{H}}^{\varphi}$ and $U_{\Phi}\colon \widetilde{\mathcal{K}}^{\Phi} \to \widehat{\mathcal{K}}^{\Phi}$ such that
 \begin{enumerate}
     \item $U_{\varphi}\widetilde{V}_{\varphi} = \widehat{V}_{\varphi}$, $U_{\varphi}\widetilde{\pi}_{\varphi}(a) = \widehat{\pi}_{\varphi}(a)U_{\varphi}$, for all $a \in \mathcal{A}$ and
     \item $U_{\Phi}\widetilde{W}_{\Phi} = \widehat{W}_{\Phi}$, $U_{\Phi} \widetilde{\rrho}_{\Phi}(x) = \widehat{\rrho}_{\Phi}(x)U_{\Phi}$, for all $x \in E$.
 \end{enumerate}
 Equivalently, for every $a \in \mathcal{A}, x \in E$, the following diagram commutes:
 \begin{center}
     \begin{tikzcd}
      & & &\widetilde{\mathcal{H}}^{\varphi}\ar[dddd, "U_{\varphi}", bend right=40, blue] & \widetilde{\mathcal{H}}^{\varphi} & \widetilde{\mathcal{K}}^{\Phi}\ar[dddd, "U_{\Phi}", bend left=40, magenta] & & & \\
      & & & \widetilde{\mathcal{D}}^{\varphi} \ar[dd, "U_{\varphi}|_{\widetilde{\mathcal{D}}^{\varphi}}", blue]\ar[u, hook, blue]\ar[r, "\widetilde{\pi}_{\varphi}(a)", blue] & \widetilde{\mathcal{D}}^{\varphi}\ar[dd, "U_{\varphi}|_{\widetilde{\mathcal{D}}^{\varphi}}", blue]\ar[u, hook]\ar[r, "\widetilde{\rrho}_{\Phi}(x)", magenta] & \widetilde{\mathcal{O}}^{\Phi}\ar[dd, "U_{\Phi}|_{\widetilde{\mathcal{O}}^{\Phi}}", magenta]\ar[u, hook, magenta] & & &\\
      \mathcal{H} \ar[uurrr, "\widetilde{V}_{\varphi}"] \ar[ddrrr, "\widehat{V}_{\varphi}"] & & &  & & &  & & \mathcal{K}\ar[uulll, "\widetilde{W}_{\Phi}"] \ar[ddlll, "\widehat{W}_{\Phi}"]\\
     & & &  \widehat{\mathcal{D}}^{\varphi}\ar[d, hook, blue] \ar[r, "\widehat{\pi}_{\varphi}(a)", blue] & \widehat{\mathcal{D}}^{\varphi} \ar[d, hook]\ar[r, "\widehat{\rrho}_{\Phi}(x)", magenta]& \widehat{\mathcal{O}}^{\Phi} \ar[d, hook, magenta]& & & \\
     & & & \widehat{\mathcal{H}}^{\varphi} & \widehat{\mathcal{H}}^{\varphi} & \widehat{\mathcal{K}}^{\Phi} & & &
     \end{tikzcd}
 \end{center}
 \end{thm}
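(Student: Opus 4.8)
The plan is to produce the two unitaries separately and then verify that they intertwine all the structure maps. For the first unitary $U_{\varphi}$, I would observe that each of the two pairs of triples restricts, in its \emph{first} component, to a minimal Stinespring representation of the local CP-map $\varphi$ in the sense of Definition \ref{Definition: minimalityForDosiev}: indeed condition (1) of Definition \ref{Definition: minimalityforphimaps} is exactly the minimality of $(\widetilde{\pi}_{\varphi}, \widetilde{V}_{\varphi}, \{\widetilde{\mathcal{H}}^{\varphi};\widetilde{\mathcal{E}}^{\varphi};\widetilde{\mathcal{D}}^{\varphi}\})$ and of its hatted counterpart. Hence Theorem \ref{Theorem: UniquenessforDosiev} applies verbatim and already furnishes a unitary $U_{\varphi}\colon \widetilde{\mathcal{H}}^{\varphi} \to \widehat{\mathcal{H}}^{\varphi}$ with $U_{\varphi}\widetilde{V}_{\varphi} = \widehat{V}_{\varphi}$ and $U_{\varphi}\widetilde{\pi}_{\varphi}(a) = \widehat{\pi}_{\varphi}(a)U_{\varphi}$ for all $a \in \mathcal{A}$. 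Thus item (1) requires no fresh argument, and the whole task reduces to constructing $U_{\Phi}$ compatibly.

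For $U_{\Phi}$, I would mimic that construction one level up. By condition (2) of Definition \ref{Definition: minimalityforphimaps}, the set $\operatorname{span}\{\widetilde{\rrho}_{\Phi}(x)\widetilde{V}_{\varphi}h : x \in E,\ h \in \mathcal{D}\}$ is dense in $\widetilde{\mathcal{K}}^{\Phi}$, and similarly for the hatted data. I would define $U_{\Phi}\big(\sum_{i}\widetilde{\rrho}_{\Phi}(x_{i})\widetilde{V}_{\varphi}h_{i}\big) = \sum_{i}\widehat{\rrho}_{\Phi}(x_{i})\widehat{V}_{\varphi}h_{i}$. The crucial point making this well defined and isometric is that $\rrho_{\Phi}$ is a $\pi_{\varphi}$-morphism, so $\widetilde{\rrho}_{\Phi}(x_{i})^{\ast}\widetilde{\rrho}_{\Phi}(x_{j}) = \widetilde{\pi}_{\varphi}(\langle x_{i}, x_{j}\rangle)$ and therefore $\langle \widetilde{\rrho}_{\Phi}(x_{i})\widetilde{V}_{\varphi}h_{i},\ \widetilde{\rrho}_{\Phi}(x_{j})\widetilde{V}_{\varphi}h_{j}\rangle = \langle h_{i},\ \varphi(\langle x_{i}, x_{j}\rangle)h_{j}\rangle$, an expression depending only on $\varphi$ and the raw data $x_{i}, h_{i}$, not on the chosen representation. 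Evaluating norms at each level $\ell \in \Omega$ (the relevant vectors sitting in $\widetilde{\mathcal{K}}_{\ell}^{\Phi}$, resp. $\widehat{\mathcal{K}}_{\ell}^{\Phi}$), the two quantities coincide, so $U_{\Phi}$ is a well-defined isometry with dense range and extends to a unitary $\widetilde{\mathcal{K}}^{\Phi} \to \widehat{\mathcal{K}}^{\Phi}$.

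The intertwining relations then follow on the dense spans and extend by continuity. Using the module identity $\rrho_{\Phi}(x)\pi_{\varphi}(a) = \rrho_{\Phi}(xa)$, valid for any $\pi_{\varphi}$-morphism, I would compute on $\widetilde{\mathcal{D}}^{\varphi}$ that $U_{\Phi}\widetilde{\rrho}_{\Phi}(x)\big(\sum_{i}\widetilde{\pi}_{\varphi}(a_{i})\widetilde{V}_{\varphi}h_{i}\big) = \sum_{i}\widehat{\rrho}_{\Phi}(xa_{i})\widehat{V}_{\varphi}h_{i} = \widehat{\rrho}_{\Phi}(x)U_{\varphi}\big(\sum_{i}\widetilde{\pi}_{\varphi}(a_{i})\widetilde{V}_{\varphi}h_{i}\big)$, giving the relation $U_{\Phi}\widetilde{\rrho}_{\Phi}(x) = \widehat{\rrho}_{\Phi}(x)U_{\varphi}$ recorded by the magenta square of the diagram (note that here the source unitary is $U_{\varphi}$, since $\rrho_{\Phi}(x)$ maps the $\mathcal{H}^{\varphi}$-side into the $\mathcal{K}^{\Phi}$-side). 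For the coisometry relation I would pass to adjoints: the defining containment $\Phi(x) \subseteq W_{\Phi}^{\ast}\rrho_{\Phi}(x)V_{\varphi}$ pins down $\widetilde{W}_{\Phi}^{\ast}(\widetilde{\rrho}_{\Phi}(x)\widetilde{V}_{\varphi}h) = \Phi(x)h = \widehat{W}_{\Phi}^{\ast}(\widehat{\rrho}_{\Phi}(x)\widehat{V}_{\varphi}h)$ on the dense set, whence $\widehat{W}_{\Phi}^{\ast}U_{\Phi} = \widetilde{W}_{\Phi}^{\ast}$ on $\widetilde{\mathcal{K}}^{\Phi}$; taking adjoints and invoking unitarity of $U_{\Phi}$ yields $U_{\Phi}\widetilde{W}_{\Phi} = \widehat{W}_{\Phi}$.

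I expect the main obstacle to be the domain bookkeeping intrinsic to the unbounded, quantized-domain setting: one must keep track of the fact that $\widetilde{\rrho}_{\Phi}(x)$, $\widetilde{\pi}_{\varphi}(a)$ and the $W$'s are only densely defined, ensure all the finite sums used above actually live in a common $\mathcal{H}_{\ell}$ (as in the earlier proofs, by choosing a single $\ell$ dominating the finitely many $h_{i}$), and confirm that the containments rather than equalities suffice for the intertwining identities. The genuinely new wrinkle compared with Theorem \ref{Theorem: UniquenessforDosiev} is the $W_{\Phi}$-relation, where minimality of the $\Phi$-component is precisely what guarantees that $\widetilde{W}_{\Phi}^{\ast}$ is determined on all of $\widetilde{\mathcal{K}}^{\Phi}$ by the formula $\Phi(x)h$, so that the adjoint argument closes.
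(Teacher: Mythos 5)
Your proposal is correct and follows essentially the same route as the paper's own proof: it invokes Theorem \ref{Theorem: UniquenessforDosiev} to produce $U_{\varphi}$, defines $U_{\Phi}$ on the dense span $\Span\{\widetilde{\rrho}_{\Phi}(E)\widetilde{V}_{\varphi}\mathcal{D}\}$ with isometry checked through the $\pi_{\varphi}$-morphism identity (so the inner products depend only on $\varphi$ and the raw data), and then derives the $W_{\Phi}$-relation by the same adjoint-plus-minimality argument and the $\rrho_{\Phi}$-intertwining by the same dense-span computation using $\rrho_{\Phi}(x)\pi_{\varphi}(a)=\rrho_{\Phi}(xa)$. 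Your formulation of the intertwining as $U_{\Phi}\widetilde{\rrho}_{\Phi}(x)=\widehat{\rrho}_{\Phi}(x)U_{\varphi}$ (with $U_{\varphi}$, not $U_{\Phi}$, acting on the $\mathcal{H}^{\varphi}$-side) is the type-correct version, consistent with the paper's commutative diagram and with what its proof actually computes despite the notation in item (2) of the statement.
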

 \begin{proof} Since $\widetilde{\mathcal{E}}^{\varphi} \subseteq \mathcal{E}$, $\widetilde{\mathcal{H}}^{\varphi} = [\widetilde{\pi}_{\varphi}(\mathcal{A})\widetilde{V}_{\varphi}\mathcal{D}]$ and $\widehat{\mathcal{E}}^{\varphi} \subseteq \mathcal{E}$, $\widehat{\mathcal{H}}^{\varphi} = [\widehat{\pi}_{\varphi}(\mathcal{A})\widehat{V}_{\varphi}\mathcal{D}]$, we see that $\big(\widetilde{\pi}_{\varphi}, \widetilde{V}_{\varphi}, \{\widetilde{\mathcal{H}}^{\varphi}; \widetilde{\mathcal{E}}^{\Phi}; \widetilde{D}^{\varphi}\}\big)$ and $\big(\widehat{\pi}_{\varphi}, \widehat{V}_{\varphi}, \{\widehat{\mathcal{H}}^{\varphi}; \widehat{\mathcal{E}}^{\Phi}; \widehat{D}^{\varphi}\}\big)$  are two minimal Stinespring representations for $\varphi$. It is clear from Theorem \ref{Theorem: UniquenessforDosiev} that there exists a unitary operator $U_{\varphi} \colon \widetilde{\mathcal{H}}^{\varphi} \to \widehat{\mathcal{H}}^{\varphi}$ such that $$U_{\varphi}\widetilde{V}_{\varphi} = \widehat{V}_{\varphi},\;\; U_{\varphi}\widetilde{\pi}_{\varphi}(a) = \widehat{\pi}_{\varphi}(a)U_{\varphi}, \; \text{for all}\; a \in \mathcal{A}.$$
 Now we define $U_{\Phi} \colon \text{span}\big\{\widetilde{\rrho}_{\Phi}(E)\widetilde{V}_{\varphi}\mathcal{D}\big\} \to \text{span}\big\{\widehat{\rrho}_{\Phi}(E)\widehat{V}_{\varphi}\mathcal{D}\big\}$ by
 \begin{equation*}
     U_{\Phi} \Big(\sum\limits_{i=1}^{n}\widetilde{\rrho}_{\Phi}(x_{i})\widetilde{V}_{\varphi}h_{i} \Big) = \sum\limits_{i=1}^{n}\widehat{\rrho}_{\Phi}(x_{i})\widehat{V}_{\varphi}h_{i},
 \end{equation*}
 for all $ x_{i} \in E, h_{i}\in \mathcal{D}, i \in \{1,2,3,\cdots n\}, n \in \mathbb{N}$. Then
 \begin{align*}
     \Big\|U_{\Phi} \Big(\sum\limits_{i=1}^{n}\widetilde{\rrho}_{\Phi}(x_{i})\widetilde{V}_{\varphi}h_{i} \Big)\Big\|^{2}_{\widehat{\mathcal{K}}^{\Phi}_{\ell}} &= \Big\|\sum\limits_{i=1}^{n}\widehat{\rrho}_{\Phi}(x_{i})\widehat{V}_{\varphi}h_{i}\Big\|^{2}_{\widehat{\mathcal{K}}^{\Phi}_{\ell}}\\
     &= \sum\limits_{i,j = 1}^{n} \Big\langle h_{i}, \widehat{V}_{\varphi}^{\ast}\widehat{\rrho}_{\Phi}(x_{i})^{\ast}\widehat{\rrho}_{\Phi}(x_{j})\widehat{V}_{\varphi}h_{j}\Big\rangle_{\mathcal{H}_{\ell}}\\
     &=\sum\limits_{i,j = 1}^{n} \Big\langle h_{i}, \widehat{V}_{\varphi}^{\ast}\widehat{\pi}_{\varphi}\big(\langle x_{i} , x_{j}\rangle\big)\widehat{V}_{\varphi}h_{j}\Big\rangle_{\mathcal{H}_{\ell}}\\
     &=\sum\limits_{i,j = 1}^{n} \Big\langle h_{i}, \widetilde{V}_{\varphi}^{\ast}\widetilde{\pi}_{\varphi}\big(\langle x_{i} , x_{j}\rangle\big)\widetilde{V}_{\varphi}h_{j}\Big\rangle_{\mathcal{H}_{\ell}}\\
     &=\Big\|\Big(\sum\limits_{i=1}^{n}\widetilde{\rrho}_{\Phi}(x_{i})\widetilde{V}_{\varphi}h_{i} \Big)\Big\|^{2}_{\widetilde{\mathcal{K}}^{\Phi}_{\ell}},
 \end{align*}
 for every $\ell \in \Omega$. This implies that $U_{\Phi}$ is well defined and is an isometry. Thus it can be extended to the whole of $\widetilde{\mathcal{K}}^{\Phi}$. Denote this extension by $U_{\Phi}$ it self. Note that $U_{\Phi}$ is a unitary since it is an isometry onto $\widehat{\mathcal{K}}^{\Phi}$. We know from hypothesis that
 \begin{equation}\label{Equation: above}
     \Phi(x)h = \widetilde{W}_{\Phi}^{\ast}\widetilde{\rrho}_{\Phi}(x)\widetilde{V}_{\varphi}h = \widehat{W}_{\Phi}^{\ast}\widehat{\rrho}_{\Phi}(x)\widehat{V}_{\varphi}h, \; \text{for every}\; h \in \mathcal{D}, x \in E.
 \end{equation}
   By the definition of $U_{\Phi}$, we can rewrite the above Equation (\ref{Equation: above}) as,
 \begin{equation*}
     \Big(\widetilde{W}_{\Phi}^{\ast} - \widehat{W}_{\Phi}^{\ast} U_{\Phi}\Big) \widetilde{\rrho}_{\Phi}(x)\widetilde{V}_{\varphi}h = 0, \; \text{for all}\; h \in \mathcal{D},  x \in E.
 \end{equation*}
 Since $\widetilde{\mathcal{H}}^{\varphi}= [\widetilde{\rrho}_{\Phi}(E)\widetilde{V}_{\varphi}\mathcal{D}]$, we have $\widetilde{W}_{\Phi}^{\ast} = \widehat{W}_{\Phi}^{\ast} U_{\Phi}$. By taking adjoint and multiplying with $U_{\Phi}$ on both sides, we get that $U_{\Phi}\widetilde{W}_{\Phi} = \widehat{W}_{\Phi}$.

 Now we show that $U_{\Phi} \widetilde{\rrho}_{\Phi}(x) =\widehat{\rrho}_{\Phi}(x)U_{\Phi}$, for all $x \in E$.
 Using the fact that $U_{\varphi}$ is a unitary and $U_{\varphi}\big(\widetilde{\mathcal{D}}^{\varphi}\big)=\widehat{\mathcal{D}}^{\varphi},$ it is easy to see that $\text{dom}\big(U_{\Phi} \widetilde{\rrho}_{\Phi}(x)\big)=\mathcal{D}^{\varphi}=\text{dom}\big( \widehat{\rrho}_{\Phi}(x)U_{\Phi} \big)$,
 for all $x \in E$. Let $x_{i} \in E, a_{j}\in \mathcal{A}$, $h_{i}, g_{j}\in {\mathcal{H}}_{\ell}$ for $i \in \{1,2,3, \cdots n\}$, $j\in
 \{1,2,3,\cdots m\}$, $n,m \in \mathbb{N}$. Then
 \begin{align*}
     \Big\langle \sum\limits_{i=1}^{n} \widehat{\rrho}_{\Phi}(x_{i})\widehat{V}_{\varphi}h_{i},\; & U_{\Phi}\widetilde{\rrho}_{\Phi}(x)\Big(\sum\limits_{j=1}^{m} \widetilde{\pi}_{\varphi}(a_{j})\widetilde{V}_{\varphi}g_{j}\Big)\Big\rangle_{\widehat{\mathcal{K}}_{\ell}^{\Phi}} \\
     &= \sum\limits_{i,j=1}^{n,m}\Big\langle  h_{i},\; \widehat{V}_{\varphi}^{\ast}\widehat{\rrho}_{\Phi}(x_{i})^{\ast}\widehat{\rrho}_{\Phi}(xa_{j})\widehat{V}_{\varphi}g_{j}\Big\rangle_{\mathcal{H}_{\ell}}\\
     &= \sum\limits_{i,j=1}^{n,m}\Big\langle  h_{i},\; \widehat{V}_{\varphi}^{\ast}\widehat{\pi}_{\varphi}\big(\langle x_{i}, x\rangle a_{j}\big)\widehat{V}_{\varphi}g_{j}\Big\rangle_{\mathcal{H}_{\ell}}\\
     &= \sum\limits_{i,j=1}^{n,m}\Big\langle  h_{i},\; \widehat{V}_{\varphi}^{\ast}\widehat{\pi}_{\varphi}\big(\langle x_{i}, x\rangle \big)\widehat{\pi}_{\varphi}(a_{j})\widehat{V}_{\varphi}g_{j}\Big\rangle_{\mathcal{H}_{\ell}}\\
     &= \sum\limits_{i,j=1}^{n,m}\Big\langle  h_{i},\; \widetilde{V}_{\varphi}^{\ast}\widehat{\rrho}_{\Phi}(x_{i})^{\ast}\widehat{\rrho}_{\Phi}(x)U_{\Phi}\Big(\widetilde{\pi}_{\varphi}(a_{j})\widetilde{V}_{\varphi}g_{j}\Big)\Big\rangle_{\mathcal{H}_{\ell}}\\
     &= \Big\langle \sum\limits_{i=1}^{n} \widehat{\rrho}_{\Phi}(x_{i})\widehat{V}_{\varphi}h_{i},\; \widehat{\rrho}_{\Phi}(x)U_{\Phi}\Big(\sum\limits_{j=1}^{m}\widetilde{\pi}_{\varphi}(a_{j})\widetilde{V}_{\varphi}g_{j}\Big)\Big\rangle_{\widehat{\mathcal{K}}_{\ell}^{\Phi}},
 \end{align*}
 Thus $U_{\Phi} \widetilde{\rrho}_{\Phi}(x) = \widehat{\rrho}_{\Phi}(x)U_{\Phi}$ on the dense set $\text{span}\{\widetilde{\pi}_{\varphi}(\mathcal{A})\widetilde{V}_{\varphi}\mathcal{D}\}$ and hence they are equal on $\widetilde{\mathcal{D}}^{\varphi}$. We conclude that $U_{\Phi} \widetilde{\rrho}_{\Phi}(x) = \widehat{\rrho}_{\Phi}(x)U_{\Phi}$, for all $x \in E$.
 \end{proof}
\section{Radon-Nikodym theorem for Local CP-inducing maps}

In this section, motivated by the work of Joita \cite{Joita-Main}, we provide order relation on the class of all local CP-inducing maps defined on some Hilbert module over a locally $C^{\ast}$-algebra. Throughout this section $E$ denotes a Hilbert module over the locally $C^{\ast}$-algebra $\mathcal{A}$ and $\{\mathcal{H}; \mathcal{E}; \mathcal{D}\}$, $\{\mathcal{K}; \mathcal{F}; \mathcal{O}\}$ are quantized domains.

Let us begin our discussion with the following definition which describes order on the class $C_{loc}(E, C^{*}_{\mathcal{E, F}}(\mathcal{D, O}))$. Also, for any given $\Phi \in C_{loc}(E, C^{*}_{\mathcal{E, F}}(\mathcal{D, O}))$, we compute Randon-Nikodym derivative and characterize all the maps that are dominated by $\Phi$.

\begin{defn}\label{Definition: orderrelation}Let $\Phi, \Psi\in C_{loc}(E, C^{*}_{\mathcal{E, F}}(\mathcal{D, O}))$, where $\Phi $ is a $\varphi $-map and $\Psi $ is a $\psi $-map, $\varphi , \psi \in   \cpccloc  .$ We say that $\Psi$ is \emph{dominated} by $\Phi$ with the notation $\Psi \preceq \Phi$, if
$$\varphi-\psi \in \cpccloc.$$
\end{defn}
\begin{rmk} Let $\Phi_{i}\in \mathcal{C}_{loc}(E, C^{*}_{\mathcal{E, F}}(\mathcal{D, O}))$ for $i=1,2,3.$ Then following are some immediate observations:
\begin{enumerate}
     \item [(i)] $\Phi_{1} \preceq \Phi_{1}.$
    \item [(ii)] If $\Phi_{1} \preceq \Phi_{2}$ and $\Phi_{2} \preceq \Phi_{3}$, then $\Phi_{1} \preceq \Phi_{3}.$
    \item [(ii)] If $\Phi_{1} \preceq \Phi_{2}$ and $\Phi_{2} \preceq \Phi_{1}$, then $\varphi_{1}=\varphi_{2}.$
    \end{enumerate}
\end{rmk}
\begin{defn}\label{Definition: equivalent}
Let $\Phi_{1},\Phi_{2}\in \mathcal{C}_{loc}(E, C^{*}_{\mathcal{E, F}}(\mathcal{D, O})),$ where $\Phi_{i}$ is $\varphi_{i}$-map, $\varphi_{i} \in \cpccloc$ for $i=1,2$. We say that $\Phi_{1}$ is equivalent to $\Phi_{2}$, denoted by $\Phi_{1}\sim \Phi_{2}$ if and only if $\varphi_{1}= \varphi_{2}$.
\end{defn}
Note that $\sim$ is an equivalence relation on $\cloc$ and the equivalence class of $\Phi \in \cloc$ is denoted by $[\Phi]$. The following theorem characterizes the equivalence class via partial isometries.
\begin{thm}
Let $\Phi, \Psi \in \mathcal{C}_{loc}(E, C^{*}_{\mathcal{E, F}}(\mathcal{D, O}))$. Then $\Psi \in [\Phi]$ if and only if there is a partial isometry $W \in \mathcal{B}(\mathcal{K})$ satisfying, $W^{\ast}W = P_{\mathcal{K}^{\Psi}}$ and $WW^{\ast} = P_{\mathcal{K}^{\Phi}}$ such that,
\begin{equation*}
    \Psi(x) = W\Phi(x), \; \text{for all}\; x \in E.
\end{equation*}
\begin{proof}
Let $\Phi$ and $\Psi$ be $\varphi$-map and $\psi$-map respectively, where $\varphi \text{~and~} \psi \in \cpccloc$.
With out loss of generality, we assume that $\Big( (\pi_{\varphi}, V_{\varphi}, \{\mathcal{H}^{\varphi}; \mathcal{E}^{\varphi};
\mathcal{D}^{\varphi}\}),\; (\rrho_{\Phi},  W_{\Phi},  \{\mathcal{K}^{\Phi}; \mathcal{F}^{\Phi}; \mathcal{O}^{\Phi}\})\Big)$ and
$\Big( (\pi_{\psi}, V_{\psi}, \{\mathcal{H}^{\psi}; \mathcal{E}^{\psi}; \mathcal{D}^{\psi}\}),\; (\rrho_{\Psi}, W_{\Psi}, \{\mathcal{K}^{\Psi};
\mathcal{F}^{\Psi}; \mathcal{O}^{\Psi}\})\Big)$ are the minimal Stinespring's triples associated to $(\varphi, \Phi)$ and $(\psi, \Psi)$
respectively. Suppose that $\Psi \in [\Phi]$, then $\varphi = \psi$. Now we define
\begin{equation*}
    U\big(\sum\limits_{i=1}^{n}\rrho_{\Phi}(x_{i})V_{\varphi}h_{i}\big) = \sum\limits_{i=1}^{n}\rrho_{\Psi}(x_{i})V_{\psi}h_{i},
\end{equation*}
for every $x_{i}\in E, h_{i} \in \mathcal{H}_{\ell}$, $i \in \{1, 2, 3,\hdots, n\}$ and $\ell \in \Omega$. Then
\begin{align*}
    \Big\|\sum\limits_{i=1}^{n}\rrho_{\Psi}(x_{i})V_{\psi}h_{i}\Big\|^{2}_{\mathcal{K}_{\ell}^{\Psi}} &= \sum\limits_{i,j=1}^{n}\big\langle h_{i},\; V_{\psi}^{\ast}\rrho_{\Psi}(x_{i})^{\ast}\rrho_{\Psi}(x_{j})V_{\psi}h_{j}\big\rangle_{\mathcal{H}_{\ell}}\\
    &= \sum\limits_{i,j=1}^{n}\big\langle h_{i},\; V_{\varphi}^{\ast}\pi_{\varphi}(\langle x_{i}, x_{j}\rangle)V_{\varphi}h_{j}\big\rangle_{\mathcal{H}_{\ell}}\\
    &= \sum\limits_{i,j=1}^{n}\big\langle h_{i},\; V_{\varphi}^{\ast}\rrho_{\Phi}(x_{i})^{\ast}\rrho_{\Phi}(x_{j})V_{\varphi}h_{j}\big\rangle_{\mathcal{H}_{\ell}}\\
    &=\Big\|\sum\limits_{i=1}^{n}\rrho_{\Phi}(x_{i})V_{\varphi}h_{i}\Big\|^{2}_{\mathcal{K}_{\ell}^{\Phi}}.
\end{align*}
It shows that $U$ is an isometry that maps a dense subset of $\mathcal{K}_{\ell}^{\Phi}$ to a dense subset of $\mathcal{K}_{\ell}^{\Psi}$, for every $\ell \in \Omega$. Therefore it can be extended to a unitary, again denote it by $U \in \mathcal{B}(\mathcal{K}^{\Phi}, \mathcal{K}^{\Psi})$.

If we take $W = W_{\Psi}^{\ast}UW_{\Phi}$, then clearly, we see that $W^{\ast}W = P_{\mathcal{K}^{\Phi}}$ and $WW^{\ast} = P_{\mathcal{K}^{\Psi}}$. Moreover, for every $x \in E$, we have
\begin{equation*}
    \Psi(x) = W_{\Psi}^{\ast}\rrho_{\Psi}(x)V_{\psi}|_{\mathcal{D}} = W_{\Psi}^{\ast}U\rrho_{\Phi}(x)V_{\varphi}|_{\mathcal{D}} = W_{\Psi}^{\ast}UW_{\Phi}W_{\Phi}^{\ast}\rrho_{\Phi}(x)V_{\varphi}|_{\mathcal{D}} = W\Phi(x).
\end{equation*}
Hence proved.
\end{proof}
\end{thm}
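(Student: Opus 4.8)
The plan is to establish both implications through the minimal Stinespring representations of the pairs $(\varphi,\Phi)$ and $(\psi,\Psi)$ provided by Theorem \ref{Theorem: Stinespringforphimaps} (after the minimality reduction), say $\big((\pi_\varphi,V_\varphi,\{\mathcal H^\varphi;\mathcal E^\varphi;\mathcal D^\varphi\}),(\rrho_\Phi,W_\Phi,\{\mathcal K^\Phi;\mathcal F^\Phi;\mathcal O^\Phi\})\big)$ for $(\varphi,\Phi)$ and the analogous objects for $(\psi,\Psi)$. The two identities I will lean on repeatedly are the $\pi_\varphi$-morphism relation $\rrho_\Phi(x)^\ast\rrho_\Phi(y)=\pi_\varphi(\langle x,y\rangle)$ and $V_\varphi^\ast\pi_\varphi(a)V_\varphi=\varphi(a)$ on $\mathcal D$, together with their counterparts for $\Psi$, and the fact that $W_\Phi$ is the projection onto $\mathcal K^\Phi$ so that $W_\Phi W_\Phi^\ast=I_{\mathcal K^\Phi}$ and $W_\Phi^\ast\rrho_\Phi(x)V_\varphi=\Phi(x)$ on $\mathcal D$.

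For the forward direction I assume $\Psi\in[\Phi]$, so $\varphi=\psi$ by Definition \ref{Definition: equivalent}. I would first define $U$ on the dense subspace $\Span\{\rrho_\Phi(x)V_\varphi h:x\in E,\ h\in\mathcal D\}$ of $\mathcal K^\Phi$ by $U\big(\sum_i\rrho_\Phi(x_i)V_\varphi h_i\big)=\sum_i\rrho_\Psi(x_i)V_\psi h_i$. Choosing $\ell$ with all $h_i\in\mathcal H_\ell$, the $\mathcal K^\Psi_\ell$-norm of the image expands, via the $\pi_\psi$-morphism relation and $V_\psi^\ast\pi_\psi(\cdot)V_\psi=\psi(\cdot)$, into $\sum_{i,j}\langle h_i,\psi(\langle x_i,x_j\rangle)h_j\rangle$; since $\psi=\varphi$ this equals the $\mathcal K^\Phi_\ell$-norm of the argument. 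This single calculation shows simultaneously that $U$ is well defined and $\ell$-wise isometric, and minimality (Definition \ref{Definition: minimalityforphimaps}) makes its image dense in $\mathcal K^\Psi$, so $U$ extends to a unitary $\mathcal K^\Phi\to\mathcal K^\Psi$. Setting $W:=W_\Psi^\ast UW_\Phi\in\mathcal B(\mathcal K)$, the relations $W^\ast W=P_{\mathcal K^\Phi}$ and $WW^\ast=P_{\mathcal K^\Psi}$ drop out of the unitarity of $U$ together with $W_\Phi W_\Phi^\ast=I_{\mathcal K^\Phi}$ and $W_\Psi W_\Psi^\ast=I_{\mathcal K^\Psi}$, while $\Psi(x)=W_\Psi^\ast\rrho_\Psi(x)V_\psi|_{\mathcal D}=W_\Psi^\ast U\rrho_\Phi(x)V_\varphi|_{\mathcal D}=W_\Psi^\ast UW_\Phi W_\Phi^\ast\rrho_\Phi(x)V_\varphi|_{\mathcal D}=W\Phi(x)$ supplies the required intertwining.

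For the converse I suppose $W$ is a partial isometry with $W^\ast W=P_{\mathcal K^\Phi}$, $WW^\ast=P_{\mathcal K^\Psi}$ and $\Psi(x)=W\Phi(x)$ for all $x$. Since every $\Phi(y)$ maps $\mathcal D$ into $\mathcal K^\Phi$, I have $P_{\mathcal K^\Phi}\Phi(y)=\Phi(y)$, hence for $h,g\in\mathcal D$ the chain $\langle\Psi(x)h,\Psi(y)g\rangle=\langle W\Phi(x)h,W\Phi(y)g\rangle=\langle\Phi(x)h,W^\ast W\Phi(y)g\rangle=\langle\Phi(x)h,\Phi(y)g\rangle$ holds. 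Reading the two outer expressions through the $\psi$- and $\varphi$-map identities yields $\psi(\langle x,y\rangle)=\varphi(\langle x,y\rangle)$ on $\mathcal D$, hence as operators, for all $x,y\in E$. Using fullness of $E$, so that $\Span\{\langle x,y\rangle\}$ is dense in $\mathcal A$, together with the continuity of $\varphi$ and $\psi$, I conclude $\varphi=\psi$ on $\mathcal A$, i.e.\ $\Psi\in[\Phi]$.

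The step I expect to be the real obstacle is the forward direction's extension of $U$ from the dense span to a genuine unitary: because the representations are unbounded and only locally bounded, well-definedness (independence of the representative $\sum_i\rrho_\Phi(x_i)V_\varphi h_i$) and the isometry estimate must be verified at each filtration level $\ell$ and then shown to be consistent across the family $\{\mathcal H_\ell\}$, and it is precisely the identity $\varphi=\psi$ of the seminorm data that guarantees this compatibility. A lesser subtlety is that the converse genuinely needs $\Phi(E)\mathcal D\subseteq\mathcal K^\Phi$ and the fullness of $E$ in order to upgrade equality on inner products to equality of $\varphi$ and $\psi$ as maps on all of $\mathcal A$.
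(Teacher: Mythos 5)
Your forward implication is, step for step, the paper's own proof: the same operator $U$ defined on $\Span\{\rrho_{\Phi}(E)V_{\varphi}\mathcal{D}\}$ by $U\big(\sum_{i}\rrho_{\Phi}(x_{i})V_{\varphi}h_{i}\big)=\sum_{i}\rrho_{\Psi}(x_{i})V_{\psi}h_{i}$, the same levelwise norm computation reducing everything to $\psi(\langle x_{i},x_{j}\rangle)=\varphi(\langle x_{i},x_{j}\rangle)$, the same extension to a unitary via minimality, and the same partial isometry $W=W_{\Psi}^{\ast}UW_{\Phi}$ with the identical chain verifying $\Psi(x)=W\Phi(x)$. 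One remark: like the paper's proof, you obtain $W^{\ast}W=P_{\mathcal{K}^{\Phi}}$ and $WW^{\ast}=P_{\mathcal{K}^{\Psi}}$, which is the transpose of the labelling in the theorem statement; that discrepancy sits inside the paper itself, and your orientation is the correct one (it is also the only orientation under which a converse argument can run).

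The genuine difference is the converse, because the paper's proof in fact establishes only the forward implication (its closing ``Hence proved'' covers one direction), whereas you supply the reverse one. Your derivation of $\Psi(x)^{\ast}\Psi(y)=\Phi(x)^{\ast}W^{\ast}W\Phi(y)=\Phi(x)^{\ast}\Phi(y)$, hence $\psi(\langle x,y\rangle)=\varphi(\langle x,y\rangle)$ for all $x,y\in E$, is correct, and you rightly note it needs $\Phi(E)\mathcal{D}\subseteq\mathcal{K}^{\Phi}$, which holds because $\mathcal{K}^{\Phi}_{\ell}=[\Phi(E)\mathcal{H}_{\ell}]$ in the minimal construction. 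The gap is the last step: to upgrade agreement on inner products to $\varphi=\psi$ on all of $\mathcal{A}$ --- which is what $\Psi\in[\Phi]$ means by the paper's definition of the equivalence relation --- you invoke fullness of $E$, and fullness is not among the hypotheses of the theorem. Without it, your argument only yields equality of $\varphi$ and $\psi$ on $\overline{\Span}\{\langle x,y\rangle:x,y\in E\}$, and indeed the ``only if'' direction of the theorem as stated can fail (for non-full $E$ the CP map associated to $\Phi$ is not even uniquely determined, so $\Psi$ and $\Phi$ can be related by such a $W$ while carrying different associated maps). So relative to the paper you have proved strictly more, but the converse should carry fullness (or an equivalent determinacy assumption on the pairs $(\varphi,\Phi)$, $(\psi,\Psi)$) as an explicit extra hypothesis; the paper avoids confronting this only by never proving that direction.
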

Let $\Phi\in \mathcal{C}_{loc}(E, C^{*}_{\mathcal{E, F}}(\mathcal{D, O}))$. Then  $\Big( \big(\pi_{\varphi}, V_{\varphi}, \{\mathcal{\mathcal{H}^{\varphi}};\mathcal{\mathcal{E}^{\varphi}}; \mathcal{D}^{\varphi}\}\big), \big(\rrho_{\Phi}, W_{\Phi}, \{\mathcal{K}^{\Phi};\mathcal{F}^{\Phi}; \mathcal{O}^{\Phi}\}\big)\Big)$ is a Stinespring's representation of $(\varphi, \Phi)$. We define the commutant, denoted by  $\rrho_{\Phi}(E)^{\prime}$, of $\rrho_{\Phi}(E)$ as,
\begin{align*}
   \notag \rrho_{\Phi}(E)^{\prime}
  :=\Big\{T\oplus S\in \mathcal{B}(\mathcal{H}^{\varphi}\oplus \mathcal{K}^{\Phi}): \;&    S\rrho_{\Phi}(x)\subseteq \rrho_{\Phi}(x)T \;  \text{and}\\
  &  T\rrho_{\Phi}(x)^*\subseteq \rrho_{\Phi}(x)^*S, \; \text{for all}\; x\in E \Big\}.
\end{align*}
Now, we show some results that are required to establish our main result.
\begin{lemma}\label{Lemma: uniquelydetermined} If $T\oplus S\in \rrho_{\Phi}(E)^{\prime}\cap  C^{*}_{\mathcal{E}^{\varphi}\oplus\mathcal{F}^{\Phi}}(\mathcal{D}^{\varphi}\oplus\mathcal{O}^{\Phi}),$ then for all $x\in E,$ we have the following:
\begin{enumerate}
    \item[(i)] $T^{*}\rrho_{\Phi}(x)^{*}\subseteq \rrho_{\Phi}(x)^*S^*, $
    \item[(ii)]  $S^{*}\rrho_{\Phi}(x)\subseteq \rrho_{\Phi}(x)T^*.$
Moreover $S$ is uniquely determined by $T$.
\end{enumerate}
\end{lemma}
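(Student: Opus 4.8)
The plan is to exploit the fact that $T\oplus S$ is a \emph{bounded} operator which in addition lies in $C^{*}_{\mathcal{E}^{\varphi}\oplus\mathcal{F}^{\Phi}}(\mathcal{D}^{\varphi}\oplus\mathcal{O}^{\Phi})$. Since that ambient algebra consists of block-diagonal noncommutative continuous functions, this membership forces, separately, $T\in C^{*}_{\mathcal{E}^{\varphi}}(\mathcal{D}^{\varphi})$ and $S\in C^{*}_{\mathcal{F}^{\Phi}}(\mathcal{O}^{\Phi})$; in particular $T$ leaves each $\mathcal{H}^{\varphi}_{\ell}$ and each $(\mathcal{H}^{\varphi}_{\ell})^{\bot}\cap\mathcal{D}^{\varphi}$ invariant, and likewise $S$ on the $\mathcal{K}^{\Phi}_{\ell}$, so that $T^{*}=T^{\bigstar}|_{\mathcal{D}^{\varphi}}$ and $S^{*}=S^{\bigstar}|_{\mathcal{O}^{\Phi}}$ are again in these algebras. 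First I would restrict the two defining relations of $\rrho_{\Phi}(E)^{\prime}$ to a fixed level $\ell\in\Omega$. Writing $R_{\ell}:=\rrho_{\Phi}(x)|_{\mathcal{H}^{\varphi}_{\ell}}\in\mathcal{B}(\mathcal{H}^{\varphi}_{\ell},\mathcal{K}^{\Phi}_{\ell})$, $T_{\ell}:=T|_{\mathcal{H}^{\varphi}_{\ell}}$ and $S_{\ell}:=S|_{\mathcal{K}^{\Phi}_{\ell}}$, the inclusion $S\rrho_{\Phi}(x)\subseteq\rrho_{\Phi}(x)T$ collapses, on $\mathcal{H}^{\varphi}_{\ell}$, to the genuine bounded identity $S_{\ell}R_{\ell}=R_{\ell}T_{\ell}$, while $T\rrho_{\Phi}(x)^{*}\subseteq\rrho_{\Phi}(x)^{*}S$ collapses, on $\mathcal{K}^{\Phi}_{\ell}$, to $T_{\ell}R_{\ell}^{*}=R_{\ell}^{*}S_{\ell}$.

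Now everything sits on a single Hilbert space level, so ordinary Hilbert-space adjoints are available. The compatibility I would record is that, for an operator in $C^{*}_{\mathcal{E}^{\varphi},\mathcal{F}^{\Phi}}(\mathcal{D}^{\varphi},\mathcal{O}^{\Phi})$, the level restriction of the algebra-adjoint coincides with the Hilbert-space adjoint of the level restriction, i.e. $(\rrho_{\Phi}(x)|_{\mathcal{H}^{\varphi}_{\ell}})^{*}=\rrho_{\Phi}(x)^{*}|_{\mathcal{K}^{\Phi}_{\ell}}$, and similarly $(T|_{\ell})^{*}=T^{*}|_{\ell}$ and $(S|_{\ell})^{*}=S^{*}|_{\ell}$; this follows from $\langle\rrho_{\Phi}(x)\xi,\eta\rangle=\langle\xi,\rrho_{\Phi}(x)^{*}\eta\rangle$ together with the invariance of the levels under these adjoints. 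Taking Hilbert-space adjoints of $S_{\ell}R_{\ell}=R_{\ell}T_{\ell}$ yields $R_{\ell}^{*}S_{\ell}^{*}=T_{\ell}^{*}R_{\ell}^{*}$, that is $T^{*}\rrho_{\Phi}(x)^{*}=\rrho_{\Phi}(x)^{*}S^{*}$ on $\mathcal{K}^{\Phi}_{\ell}$; adjointing $T_{\ell}R_{\ell}^{*}=R_{\ell}^{*}S_{\ell}$ yields $R_{\ell}T_{\ell}^{*}=S_{\ell}^{*}R_{\ell}$, that is $S^{*}\rrho_{\Phi}(x)=\rrho_{\Phi}(x)T^{*}$ on $\mathcal{H}^{\varphi}_{\ell}$. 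Since $\mathcal{O}^{\Phi}=\bigcup_{\ell}\mathcal{K}^{\Phi}_{\ell}$ and $\mathcal{D}^{\varphi}=\bigcup_{\ell}\mathcal{H}^{\varphi}_{\ell}$, letting $\ell$ range over $\Omega$ assembles these into the asserted inclusions $(i)$ and $(ii)$ (in fact with equality on the union spaces).

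For the final claim I would suppose that $T\oplus S$ and $T\oplus S'$ both lie in $\rrho_{\Phi}(E)^{\prime}\cap C^{*}_{\mathcal{E}^{\varphi}\oplus\mathcal{F}^{\Phi}}(\mathcal{D}^{\varphi}\oplus\mathcal{O}^{\Phi})$ with the same first coordinate $T$. The level-wise identity of the previous step gives $S\rrho_{\Phi}(x)=\rrho_{\Phi}(x)T=S'\rrho_{\Phi}(x)$ on $\mathcal{D}^{\varphi}$, whence $(S-S')\rrho_{\Phi}(x)\xi=0$ for every $x\in E$ and $\xi\in\mathcal{D}^{\varphi}$. By minimality, $\mathcal{K}^{\Phi}_{\ell}=[\rrho_{\Phi}(E)V_{\varphi}\mathcal{H}_{\ell}]$, so $\Span\{\rrho_{\Phi}(x)\xi:x\in E,\ \xi\in\mathcal{D}^{\varphi}\}$ is dense in $\mathcal{K}^{\Phi}$; since $S-S'$ is bounded and vanishes on this dense set, $S=S'$, i.e. $S$ is uniquely determined by $T$.

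The only genuinely delicate point is the passage between the algebra-adjoint used inside $C^{*}_{\mathcal{E}^{\varphi}}(\mathcal{D}^{\varphi})$ and the ordinary Hilbert-space adjoint; the whole argument is organized precisely so that each relation is first cut down to a level, where the two notions of adjoint agree and all manipulations are those of bounded operators. The uniqueness half additionally rests on minimality of the representation to supply the density of $\rrho_{\Phi}(E)V_{\varphi}\mathcal{D}$ in $\mathcal{K}^{\Phi}$; without it one could only pin $S$ down on that span.
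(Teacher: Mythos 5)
Your proof is correct, but it takes a genuinely different route from the paper's for parts (i) and (ii). The paper never passes to level restrictions and Hilbert-space adjoints; instead it computes the sesquilinear form of $T^{*}\rrho_{\Phi}(x)^{*}$ against vectors $\sum_{i}\rrho_{\Phi}(x_{i})V_{\varphi}h_{i}$ and $\sum_{j}\pi_{\varphi}(b_{j})V_{\varphi}g_{j}$, moving operators across the inner product and invoking both commutant relations, and then concludes by density of $\Span\{\rrho_{\Phi}(E)V_{\varphi}\mathcal{H}_{\ell}\}$ in $\mathcal{K}_{\ell}^{\Phi}$ and of $\Span\{\pi_{\varphi}(\mathcal{A})V_{\varphi}\mathcal{H}_{\ell}\}$ in $\mathcal{H}_{\ell}^{\varphi}$ --- that is, the paper's proof of (i)--(ii) leans on minimality of the Stinespring representation, which is in force throughout Section 6. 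Your argument replaces this with a purely local one: block-diagonality splits the hypothesis into $T\in C^{*}_{\mathcal{E}^{\varphi}}(\mathcal{D}^{\varphi})$ and $S\in C^{*}_{\mathcal{F}^{\Phi}}(\mathcal{O}^{\Phi})$, the commutant inclusions collapse on each level to identities between bounded operators, and the compatibility $(T|_{\ell})^{*}=T^{*}|_{\ell}$ (which you correctly justify via invariance of $\mathcal{H}^{\varphi}_{\ell}$ and of $(\mathcal{H}^{\varphi}_{\ell})^{\perp}\cap\mathcal{D}^{\varphi}$, and likewise for $S$ and $\rrho_{\Phi}(x)$) lets you take honest adjoints level by level and assemble over $\Omega$. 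What this buys is twofold: (i) and (ii) are seen to hold for an arbitrary Stinespring representation, minimality being needed only for the uniqueness claim, and the role of the two different notions of adjoint is made explicit rather than implicit in form manipulations. What the paper's route buys is uniformity: it reuses exactly the spanning-vector computations employed everywhere else in the article and needs no separate adjoint-compatibility observation. Your uniqueness argument ($S$ and $S'$ agree on $\rrho_{\Phi}(E)\mathcal{D}^{\varphi}$, which is total in $\mathcal{K}^{\Phi}$ by minimality, and $S-S'$ is bounded) is essentially identical to the paper's, which cites $S\rrho_{\Phi}(x)V_{\varphi}h=\rrho_{\Phi}(x)TV_{\varphi}h$ together with $[\rrho_{\Phi}(E)V_{\varphi}\mathcal{H}_{\ell}]=\mathcal{K}_{\ell}^{\Phi}$.
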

\begin{proof}
In  order to prove (i), let
$\sum\limits_{i=1}^{n}\rrho_{\Phi}(x_{i})V_{\varphi}h_{i}\in \Span\{ \rrho_{\Phi}(E)V_{\varphi}\mathcal{D}\}$ and
$\sum\limits_{j=1}^n \pi_{\varphi}(b_{j})V_{\varphi}g_{j}\in \Span\{\pi_{\varphi}(\mathcal{A})V_{\varphi}\mathcal{D}\}$, for $g_{j} \in \mathcal{A}$, $j \in \{1,2,3,\hdots, m\}$. Since $\mathcal{D}$ is the union space of upward filtered family $\mathcal{E}$, there is $\ell \in \Omega$ with $h_{i}, g_{j} \in \mathcal{H}_{\ell}$ for $i \in \{1,2,3,\hdots,n\}$ and $j \in \{1,2,3,\hdots, m\}$. We see that
\begin{align*}
\Big\langle T^{*}\rrho_{\Phi}(x)^{*}\big(\sum_{i=1}^{n}\rrho_{\Phi}(x_{i})V_{\varphi}h_{i}\big),& \; \sum_{j=1}^{m} \pi_{\varphi}(b_{j})V_{\varphi}g_{j}\Big\rangle_{\mathcal{H}_{\ell}^{\varphi}}\\
&=\Big\langle \rrho_{\Phi}(x)^{*}\big(\sum_{i=1}^{n}\rrho_{\Phi}(x_{i})V_{\varphi}h_{i}\big),\; T\big( \sum_{j=1}^n \pi_{\varphi}(b_{j})V_{\varphi}g_{j}\big) \Big\rangle_{\mathcal{H}_{\ell}^{\varphi}}\\
&=\Big\langle \sum\limits_{i=1}^{n}\rrho_{\Phi}(x_{i})V_{\varphi}h_{i},\; \rrho_{\Phi}(x)T\big( \sum_{j=1}^n \pi_{\varphi}(b_{j})V_{\varphi}g_{j}\big) \Big\rangle_{\mathcal{K}_{\ell}^{\Phi}}\\
&=\Big\langle \sum_{i=1}^{n}\rrho_{\Phi}(x_{i})V_{\varphi}h_{i},\; S\rrho_{\Phi}(x)( \sum_{j=1}^n \pi_{\varphi}(b_{j})V_{\varphi}g_{j}) \Big\rangle_{\mathcal{K}_{\ell}^{\Phi}}\\
&=\Big\langle S^*(\sum_{i=1}^{n}\rrho_{\Phi}(x_{i})V_{\varphi}h_{i}),\; \rrho_{\Phi}(x)( \sum_{j=1}^n \pi_{\varphi}(b_{j})V_{\varphi}g_{j}) \Big\rangle_{\mathcal{K}_{\ell}^{\Phi}}\\
&=\Big\langle \rrho_{\Phi}(x)^*S^*(\sum_{i=1}^{n}\rrho_{\Phi}(x_{i})V_{\varphi}h_{i}),\; \sum_{j=1}^n \pi_{\varphi}(b_{j})V_{\varphi}g_{j} \Big\rangle_{\mathcal{H}_{\ell}^{\varphi}}.
\end{align*}
Since $\Span\{\pi_{\varphi}(\mathcal{A})V_{\varphi}\mathcal{H}_{\ell}\}$ is dense in $\mathcal{H}_{\ell}^{\varphi}$ and $\Span\{\rrho_{\Phi}(E)V_{\varphi}\mathcal{H}_{\ell}\}$ is dense in $\mathcal{K}_{\ell}^{\Phi},$ therefore
$T^{*}\rrho_{\Phi}(x)^{*}|_{\mathcal{K}_{\ell}^{\Phi}}= \rrho_{\Phi}(x)^*S^{*}|_{\mathcal{K}_{\ell}^{\Phi}}$ for every $\ell \in \Omega.$
Hence, we have $T^{*}\rrho_{\Phi}(x)^{*}\subseteq \rrho_{\Phi}(x)^*S^*$. Same reasoning can be applied to prove (ii). The second part follows,
as  $S\rrho _{\Phi }(x)V_{\varphi }h= \rrho _{\Phi }(x)TV_{\varphi }h$ for $x\in E, h\in {\mathcal H}_l$ and
$[\rrho_{\Phi}(E)V_{\varphi}\mathcal{H}_{\ell}]= \mathcal{K}_{\ell}^{\Phi},$ for every $\ell \in \Omega$.

\end{proof}
\begin{prop}\label{Prop: phi-map}
  Let $T\oplus S \in \rrho_{\Phi}(E)^{\prime}\cap C^{*}_{\mathcal{E}^{\varphi}\oplus\mathcal{F}^{\Phi}}(\mathcal{D}^{\varphi}\oplus\mathcal{O}^{\Phi})$ be such that $0\leq T\oplus S\leq I,$ then the map $\Phi_{T\oplus S}: E\to C^{*}_{\mathcal{E, F}}(\mathcal{D, O})$
defined by
 \begin{equation}\label{Eq: Phi T+S}
 \Phi_{T\oplus S}(x)= W_{\Phi}^{*}\sqrt{S}\rrho_{\Phi}(x)\sqrt{T}V_{\varphi}\big|_{\mathcal{D}}, \; \text{for all}\; x \in E,
 \end{equation}
is a $\rvarphi_{T^{2}}$-map. Moreover, $\Phi_{T\oplus S} \preceq \Phi$.
\end{prop}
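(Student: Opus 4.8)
The plan is to reduce the statement to the scalar Radon--Nikodym machinery already established in Proposition \ref{Proposition: phiTisCPandCC}, by first extracting the operator $T$ from $T\oplus S$, showing it lies in $\pi_{\varphi}(\mathcal{A})^{\prime}\cap C^{\ast}_{\mathcal{E}^{\varphi}}(\mathcal{D}^{\varphi})$, and then checking that the inner product of two values of $\Phi_{T\oplus S}$ collapses to $\rvarphi_{T^{2}}$. First I would record the consequences of the two hypotheses on $T\oplus S$. Since $T\oplus S$ is block diagonal on $\mathcal{H}^{\varphi}\oplus\mathcal{K}^{\Phi}$, the bound $0\le T\oplus S\le I$ forces $0\le T\le I$ and $0\le S\le I$ separately, and membership in $C^{\ast}_{\mathcal{E}^{\varphi}\oplus\mathcal{F}^{\Phi}}(\mathcal{D}^{\varphi}\oplus\mathcal{O}^{\Phi})$ forces $T\in C^{\ast}_{\mathcal{E}^{\varphi}}(\mathcal{D}^{\varphi})$ and $S\in C^{\ast}_{\mathcal{F}^{\Phi}}(\mathcal{O}^{\Phi})$. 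In particular $T$ reduces each $\mathcal{H}^{\varphi}_{\ell}$, so its continuous functional calculus $\sqrt{T}$ and $T^{2}$ also reduce $\mathcal{H}^{\varphi}_{\ell}$ and hence lie in $C^{\ast}_{\mathcal{E}^{\varphi}}(\mathcal{D}^{\varphi})$, with $0\le T^{2}\le I$.

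The crucial step, and the one I expect to be the main obstacle, is to upgrade the relations defining $\rrho_{\Phi}(E)^{\prime}$ into genuine commutation with $\pi_{\varphi}$. Using that $\rrho_{\Phi}$ is a $\pi_{\varphi}$-morphism, i.e. $\rrho_{\Phi}(x)^{\ast}\rrho_{\Phi}(y)=\pi_{\varphi}(\langle x,y\rangle)$, together with $T\rrho_{\Phi}(x)^{\ast}\subseteq\rrho_{\Phi}(x)^{\ast}S$ and $S\rrho_{\Phi}(y)\subseteq\rrho_{\Phi}(y)T$, I would compute
\begin{equation*}
T\pi_{\varphi}(\langle x,y\rangle)=T\rrho_{\Phi}(x)^{\ast}\rrho_{\Phi}(y)\subseteq\rrho_{\Phi}(x)^{\ast}S\rrho_{\Phi}(y)\subseteq\rrho_{\Phi}(x)^{\ast}\rrho_{\Phi}(y)\,T=\pi_{\varphi}(\langle x,y\rangle)\,T,
\end{equation*}
for all $x,y\in E$. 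Since $E$ is full, $\mathcal{A}=\overline{\Span}\{\langle x,y\rangle:x,y\in E\}$, so boundedness of $T$ and local contractivity (hence continuity) of $\pi_{\varphi}$ let me pass to the limit and obtain $T\pi_{\varphi}(a)\subseteq\pi_{\varphi}(a)T$ for every $a\in\mathcal{A}$; the only care needed is with the domains of the unbounded operators, which is handled because $T\in C^{\ast}_{\mathcal{E}^{\varphi}}(\mathcal{D}^{\varphi})$ preserves $\mathcal{D}^{\varphi}$ and each $\mathcal{H}^{\varphi}_{\ell}$. Thus $T\in\pi_{\varphi}(\mathcal{A})^{\prime}$, and since $\sqrt{T}$ and $T^{2}$ are norm limits of polynomials in $T$, they lie in $\pi_{\varphi}(\mathcal{A})^{\prime}$ as well.

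With these facts in hand the defining identity for $\Phi_{T\oplus S}$ is a direct computation. Taking adjoints in (\ref{Eq: Phi T+S}) and using $W_{\Phi}W_{\Phi}^{\ast}=I_{\mathcal{K}^{\Phi}}$ and $\sqrt{S}\sqrt{S}=S$, I get on $\mathcal{D}$
\begin{align*}
\langle\Phi_{T\oplus S}(x),\Phi_{T\oplus S}(y)\rangle
&=V_{\varphi}^{\ast}\sqrt{T}\,\rrho_{\Phi}(x)^{\ast}\sqrt{S}\,W_{\Phi}W_{\Phi}^{\ast}\sqrt{S}\,\rrho_{\Phi}(y)\sqrt{T}\,V_{\varphi}\\
&=V_{\varphi}^{\ast}\sqrt{T}\,\rrho_{\Phi}(x)^{\ast}S\,\rrho_{\Phi}(y)\sqrt{T}\,V_{\varphi}.
\end{align*}
Applying $S\rrho_{\Phi}(y)\subseteq\rrho_{\Phi}(y)T$, then the morphism identity, and finally the commutation of $\sqrt{T}$ with $\pi_{\varphi}(\langle x,y\rangle)$ (together with $\sqrt{T}\,T\,\sqrt{T}=T^{2}$), this reduces to $V_{\varphi}^{\ast}\pi_{\varphi}(\langle x,y\rangle)T^{2}V_{\varphi}=V_{\varphi}^{\ast}T^{2}\pi_{\varphi}(\langle x,y\rangle)V_{\varphi}=\rvarphi_{T^{2}}(\langle x,y\rangle)$. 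Hence $\langle\Phi_{T\oplus S}(x),\Phi_{T\oplus S}(y)\rangle=\rvarphi_{T^{2}}(\langle x,y\rangle)$ for all $x,y\in E$, so $\Phi_{T\oplus S}$ is a $\rvarphi_{T^{2}}$-map. That each $\Phi_{T\oplus S}(x)$ genuinely lands in $C^{\ast}_{\mathcal{E},\mathcal{F}}(\mathcal{D},\mathcal{O})$ I would verify exactly as in the proof of Theorem \ref{Theorem: Stinespringforphimaps}: the operators $\sqrt{T},\sqrt{S}$ preserve the relevant quantized subdomains and their orthocomplements, and the seminorm estimate $q_{\ell}(\Phi_{T\oplus S}(x))^{2}=\|\rvarphi_{T^{2}}(\langle x,x\rangle)\|_{\ell}<\infty$ gives boundedness of the restriction to $\mathcal{H}_{\ell}$.

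Finally, for the domination statement I would invoke Proposition \ref{Proposition: phiTisCPandCC} with the contraction $T^{2}\in\pi_{\varphi}(\mathcal{A})^{\prime}\cap C^{\ast}_{\mathcal{E}^{\varphi}}(\mathcal{D}^{\varphi})$, $0\le T^{2}\le I$: it delivers both $\rvarphi_{T^{2}}\in\cpccloc$, so that $\rvarphi_{T^{2}}$ is a legitimate associated map for $\Phi_{T\oplus S}$, and $\rvarphi_{T^{2}}\le\varphi$, i.e. $\varphi-\rvarphi_{T^{2}}\in\cpccloc$. By Definition \ref{Definition: orderrelation} this is exactly $\Phi_{T\oplus S}\preceq\Phi$, which completes the argument.
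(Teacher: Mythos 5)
Your argument follows the paper's own route: the same collapse of $\langle\Phi_{T\oplus S}(x),\Phi_{T\oplus S}(y)\rangle$ to $\rvarphi_{T^{2}}(\langle x,y\rangle)$ using $W_{\Phi}W_{\Phi}^{\ast}=I_{\mathcal{K}^{\Phi}}$ and the intertwining relations (with the square roots handled by functional calculus), the same domain/orthocomplement bookkeeping for well-definedness via Lemma~\ref{Lemma: minimalityforperp}, and the same appeal to Proposition~\ref{Proposition: phiTisCPandCC} applied to $T^{2}$ to get $\rvarphi_{T^{2}}\in\cpccloc$ and $\rvarphi_{T^{2}}\leq\varphi$, hence $\Phi_{T\oplus S}\preceq\Phi$ by Definition~\ref{Definition: orderrelation}. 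The one point of divergence is the step you yourself single out as the main obstacle, and there your proof has a genuine gap with respect to the statement as given: to promote the relation $T\pi_{\varphi}(\langle x,y\rangle)\subseteq\pi_{\varphi}(\langle x,y\rangle)T$ to $T\in\pi_{\varphi}(\mathcal{A})^{\prime}$ you invoke fullness of $E$, i.e.\ $\mathcal{A}=\overline{\Span}\{\langle x,y\rangle:x,y\in E\}$. Fullness is not a hypothesis of the proposition, nor a standing assumption of Sections 5--6 (the paper only assumes $E$ is a Hilbert $\mathcal{A}$-module), so what you prove is the proposition under an additional hypothesis.

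You should know, however, that this is not a defect you introduced: the paper's proof simply asserts $T^{2}\in\pi_{\varphi}(\mathcal{A})^{\prime}\cap C^{\ast}_{\mathcal{E}^{\varphi}}(\mathcal{D}^{\varphi})$ with no justification, and the hypotheses $S\rrho_{\Phi}(x)\subseteq\rrho_{\Phi}(x)T$, $T\rrho_{\Phi}(x)^{\ast}\subseteq\rrho_{\Phi}(x)^{\ast}S$ give commutation of $T$ only with $\pi_{\varphi}$ evaluated at inner products, exactly as your display shows. When the closed span of inner products is a proper ideal of $\mathcal{A}$ this is strictly weaker: for instance with $\mathcal{A}=\mathbb{C}\oplus M_{2}(\mathbb{C})$, $E=\mathbb{C}\oplus 0$, and $\pi_{\varphi}=\varphi$ the identity representation, the pair $(T\oplus S)$ can act arbitrarily on the part of $\mathcal{H}^{\varphi}$ that $\rrho_{\Phi}(E)$ does not see, so $T\notin\pi_{\varphi}(\mathcal{A})^{\prime}$ and in fact $\rvarphi_{T^{2}}$ need not even be local positive. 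So you have correctly isolated the weak point of the proof. Two remarks for aligning with the paper: first, the identity making $\Phi_{T\oplus S}$ a $\rvarphi_{T^{2}}$-map needs only commutation with $\pi_{\varphi}(\langle x,y\rangle)$ (this is all the paper's computation uses), so that half of the proposition holds without fullness and you could restructure to make it unconditional; second, in the only application of this proposition, Theorem~\ref{Theorem: RadonNikodymforHilbertmodules}, the operator $T$ arises as an intertwiner satisfying $T\pi_{\varphi}(a)\subseteq\pi_{\psi}(a)T$, so $T^{\ast}T\in\pi_{\varphi}(\mathcal{A})^{\prime}$ holds there for free, without any fullness assumption.
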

\begin{proof} We show that the map $\Phi_{T\oplus S}$ defined in Equation (\ref{Eq: Phi T+S}) is well defiend.
Notice that, for each $\ell\in \Omega$, $V_{\varphi}(\mathcal{H}_{\ell})\subseteq \mathcal{H}_{\ell}^{\varphi},$
$\sqrt{T}(\mathcal{H}_{\ell}^{\varphi})\subseteq \mathcal{H}_{\ell}^{\varphi}$ and $\sqrt{S}(\mathcal{K}_{\ell}^{\Phi})\subseteq
\mathcal{K}_{\ell}^{\Phi}.$ Moreover, $W_{\Phi}^*: \mathcal{K}_{\ell}^{\Phi} \to \mathcal{K}_{\ell}$ is an inclusion map.
Thus $\Phi_{T\oplus S}(x)(\mathcal{H}_{\ell})\subseteq \mathcal{K}_{\ell}$, for each $\ell \in \Omega.$ It is easy to see that
$\Phi_{T\oplus S}(x)\big|_{\mathcal{H}_{\ell}}\in \mathcal{B}(\mathcal{H}_{\ell}, \mathcal{K}_{\ell}).$ Let $\eta\in \mathcal{H}_{\ell}^{\perp}\cap \mathcal{D}.$ Then for each $\xi\in \mathcal{K}_{\ell},$ we have
\begin{align*}
\big\langle  W_{\Phi}^{*}\sqrt{S}\rrho_{\Phi}(x)\sqrt{T}V_{\varphi} \eta , \xi\big\rangle_{\mathcal{K}}
&=\big\langle \sqrt{S}\rrho_{\Phi}(x)\sqrt{T}V_{\varphi} \eta , W_{\Phi}\xi\big\rangle_{\mathcal{K}^{\Phi}}\\ \notag
&= \big\langle \rrho_{\Phi}(x)T V_{\varphi} \eta , W_{\Phi}\xi\big\rangle_{\mathcal{K}^{\Phi}}\\ \notag
&=\big\langle \rrho_{\Phi}(x)T\pi_{\varphi}(1)V_{\varphi} \eta , W_{\Phi}\xi\big\rangle_{\mathcal{K}^{\Phi}}\\ \notag
&=\big\langle \rrho_{\Phi}(x)T\pi_{\varphi}(1)V_{\varphi} \eta , W_{\Phi}\xi\big\rangle_{\mathcal{K}^{\Phi}}.
\end{align*}
 We know by Lemma \ref{Lemma: minimalityforperp} that
$\pi_{\varphi}(1)V_{\varphi} \eta\in (\mathcal{H}_{\ell}^{\varphi})^{\perp}\cap \mathcal{D}^{\varphi}$ and $T((\mathcal{H}_{\ell}^{\varphi})^{\perp}\cap \mathcal{D}^{\varphi})\subseteq (\mathcal{H}_{\ell}^{\varphi})^{\perp}\cap \mathcal{D}^{\varphi}$ and $\rrho_{\varphi}(x)((\mathcal{H}_{\ell}^{\varphi})^{\perp}\cap \mathcal{D}^{\varphi})\subseteq
(\mathcal{K}_{\ell}^{\Phi})^{\perp}\cap \mathcal{O}^{\Phi}.$ It implies that $\rrho_{\Phi}(x)T\pi_{\varphi}(1)V_{\varphi} \eta\in (\mathcal{K}_{\ell}^{\Phi})^{\perp}\cap \mathcal{O}^{\Phi}.$

Since $W_{\Phi}(\xi)\in \mathcal{K}_{\ell}^{\Phi},$ we have
\begin{equation*}
\big\langle\Phi_{T\oplus S}(x)(\eta), \xi \big\rangle_{\mathcal{K}}=0\; \text{for all}\; \xi \in \mathcal{K}_{\ell}.
\end{equation*}
It follows that $\Phi_{T\oplus S}(\mathcal{H}_{\ell}^{\perp}\cap \mathcal{D})\subseteq \mathcal{K}_l^{\perp}\cap \mathcal{O}.$
Hence $\Phi_{T\oplus S}: E\to C^{*}_{\mathcal{E, F}}(\mathcal{D, O})$
is a well defined complex linear map.

Now we show that $\Phi_{T\oplus S}$ is a $\rvarphi_{T^2}$-map. Let $x, y\in E.$ Then
\begin{align*}
\Phi_{T\oplus S}(x)^*\Phi_{T\oplus S}(y)&= (W_{\Phi}^{*}\sqrt{S}\rrho_{\Phi}(x)\sqrt{T}V_{\varphi})^* W_{\Phi}^{*}\sqrt{S}\rrho_{\Phi}(y)\sqrt{T}V_{\varphi}\\
&=V_{\varphi}^*\sqrt{T}\rrho_{\Phi}(x)^*\sqrt{S}(W_{\Phi}W_{\Phi}^*)\sqrt{S}\rrho_{\Phi}(y)\sqrt{T}V_{\varphi}\\
&=V_{\varphi}^*\sqrt{T}\rrho_{\Phi}(x)^*\sqrt{S}\sqrt{S}\rrho_{\Phi}(y)\sqrt{T}V_{\varphi}\\
&=V_{\varphi}^*\sqrt{T}\sqrt{T}(\rrho_{\Phi}(x)^*\rrho_{\Phi}(y))\sqrt{T}\sqrt{T}V_{\varphi}\\
&=V_{\varphi}^* T\pi_{\varphi}(\langle x, y\rangle)T V_{\varphi}\\
&=V_{\varphi}^* T^2\pi_{\varphi}(\langle x, y\rangle) V_{\varphi}\\
&=\rvarphi_{T^2}(\langle x, y \rangle).
\end{align*}
Thus $\Phi_{T\oplus S}$ is $\rvarphi_{T^2}$-map and hence $\Phi_{T\oplus S} \in \cloc$. Since $T^2\in \pi_{\mathcal{\varphi}}(\mathcal{A})^{\prime} \cap C^{\ast}_{\mathcal{E}^{\varphi}}
 (\mathcal{D}^{\varphi})$ and $0\leq T^2\leq 1,$ by Proposition \ref{Proposition: phiTisCPandCC}, we conclude that
 $\rvarphi_{T^2} \leq \rvarphi$. Hence by Definition \ref{Definition: orderrelation}, we have $\Phi_{T\oplus S} \preceq \Phi$.
 \end{proof}
The operator $R$ in the following theorem can be considered as the Radon-Nikodym derivative of $\Psi $ with respect to $\Phi .$
\begin{thm}\label{Theorem: RadonNikodymforHilbertmodules}
 Let $\Phi, \Psi\in \mathcal{C}_{loc}(E, C^{\ast}_{\mathcal{E}, \mathcal{F}}(\mathcal{D}, \mathcal{O}))$, where $\Phi $ is a $\varphi $-map, $\Psi $ is a $\psi $-map and $\varphi, \psi \in \cpccloc$. If  $\Psi \preceq \Phi$, then there exists a unique operator
 $R\in \rrho_{\Phi}(E)^{\prime}\cap C^{*}_{\mathcal{E}^{\varphi}\oplus \mathcal{F}^{\Phi}}(\mathcal{D}^{\varphi}\oplus \mathcal{O}^{\Phi})$ with $0\leq R\leq I$ such that $\Psi\sim \Phi_{R}.$
\end{thm}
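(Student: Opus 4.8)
The plan is to reduce the module statement to the already-established Radon--Nikodym theorem for local CP-maps and then transport the resulting derivative to the module level. Since $\Psi \preceq \Phi$ means precisely $\varphi - \psi \in \cpccloc$, i.e.\ $\psi \leq \varphi$, Theorem \ref{Theorem: MainTheroem1} produces a unique $T_{0} \in \pi_{\varphi}(\mathcal{A})^{\prime} \cap C^{\ast}_{\mathcal{E}^{\varphi}}(\mathcal{D}^{\varphi})$ with $0 \leq T_{0} \leq I$ and $\psi = \rvarphi_{T_{0}}$. Because $\Phi_{T \oplus S}$ is a $\rvarphi_{T^{2}}$-map by Proposition \ref{Prop: phi-map}, I want the $\mathcal{H}^{\varphi}$-component of $R$ to satisfy $T^{2} = T_{0}$, so I would set $T := \sqrt{T_{0}}$, the positive square root. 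Working fibrewise on each $\mathcal{H}^{\varphi}_{\ell}$ (where $T_{0}$ restricts to a bounded positive operator commuting with every $\pi_{\varphi}(a)|_{\mathcal{H}^{\varphi}_{\ell}}$ and reducing $\mathcal{H}^{\varphi}_{\ell}$), continuous functional calculus gives $T = \sqrt{T_{0}} \in \pi_{\varphi}(\mathcal{A})^{\prime} \cap C^{\ast}_{\mathcal{E}^{\varphi}}(\mathcal{D}^{\varphi})$ with $0 \leq T \leq I$ and $T^{2} = T_{0}$.

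The core of the argument is the construction of the second component $S$. Using minimality, $\mathcal{K}^{\Phi}_{\ell} = [\rrho_{\Phi}(E) V_{\varphi}\mathcal{H}_{\ell}]$, so I would define $S$ on the dense subspace $\Span\{\rrho_{\Phi}(E) V_{\varphi}\mathcal{D}\}$ of $\mathcal{K}^{\Phi}$ by
\[
S\Big(\sum_{i=1}^{n} \rrho_{\Phi}(x_{i}) V_{\varphi} h_{i}\Big) = \sum_{i=1}^{n} \rrho_{\Phi}(x_{i})\, T V_{\varphi} h_{i},
\]
which makes sense because $T V_{\varphi}h_{i} \in \mathcal{D}^{\varphi}$. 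To see $S$ is a well-defined positive contraction I would expand the norm of the right-hand side using the $\pi_{\varphi}$-morphism identity $\rrho_{\Phi}(x_{i})^{\ast}\rrho_{\Phi}(x_{j}) = \pi_{\varphi}(\langle x_{i}, x_{j}\rangle)$; since the operator matrix $[\pi_{\varphi}(\langle x_{i}, x_{j}\rangle)]$ is positive and $T$ commutes with each entry, the quantity equals $\langle \mathbf{v}, [\pi_{\varphi}(\langle x_{i},x_{j}\rangle)]^{1/2}\,\mathrm{diag}(T^{2})\,[\pi_{\varphi}(\langle x_{i},x_{j}\rangle)]^{1/2}\mathbf{v}\rangle$ with $\mathbf{v} = (V_{\varphi} h_{1},\dots,V_{\varphi} h_{n})$, and $0 \leq T^{2} \leq I$ then yields both $0 \leq \langle S\xi,\xi\rangle$ and $\|S\xi\|^{2} \leq \|\xi\|^{2}$. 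Thus $S$ extends to a positive contraction on $\mathcal{K}^{\Phi}$, and because it sends the spanning set of $\mathcal{K}^{\Phi}_{\ell}$ into $\mathcal{K}^{\Phi}_{\ell}$ (as $T$ reduces $\mathcal{H}^{\varphi}_{\ell}$ and $\rrho_{\Phi}(x)\mathcal{H}^{\varphi}_{\ell} \subseteq \mathcal{K}^{\Phi}_{\ell}$), self-adjointness forces $S$ to reduce each $\mathcal{K}^{\Phi}_{\ell}$, so $S \in C^{\ast}_{\mathcal{F}^{\Phi}}(\mathcal{O}^{\Phi})$ and $0\le S \le I$.

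Next I would check $R := T \oplus S$ lies in $\rrho_{\Phi}(E)^{\prime}$. The relation $S\rrho_{\Phi}(x) \subseteq \rrho_{\Phi}(x) T$ holds by the very definition of $S$, while the companion relation $T\rrho_{\Phi}(x)^{\ast} \subseteq \rrho_{\Phi}(x)^{\ast} S$ follows by evaluating both sides on a typical vector $\rrho_{\Phi}(y)V_{\varphi}g$ and again using $\rrho_{\Phi}(x)^{\ast}\rrho_{\Phi}(y) = \pi_{\varphi}(\langle x,y\rangle)$ together with $T \in \pi_{\varphi}(\mathcal{A})^{\prime}$: both sides reduce to $\pi_{\varphi}(\langle x,y\rangle) T V_{\varphi} g$ on the dense spanning set and extend by continuity. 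Hence $R \in \rrho_{\Phi}(E)^{\prime} \cap C^{\ast}_{\mathcal{E}^{\varphi}\oplus \mathcal{F}^{\Phi}}(\mathcal{D}^{\varphi}\oplus\mathcal{O}^{\Phi})$ with $0 \leq R \leq I$. By Proposition \ref{Prop: phi-map}, $\Phi_{R} = \Phi_{T\oplus S}$ is a $\rvarphi_{T^{2}}$-map; since $T^{2} = T_{0}$ and $\rvarphi_{T_{0}} = \psi$, the map $\Phi_{R}$ is a $\psi$-map, so $\Psi \sim \Phi_{R}$ by Definition \ref{Definition: equivalent}.

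For uniqueness, if $R^{\prime} = T^{\prime}\oplus S^{\prime}$ is another such operator, then $\Phi_{R^{\prime}}$ is a $\rvarphi_{(T^{\prime})^{2}}$-map equivalent to $\Psi$, so $\rvarphi_{(T^{\prime})^{2}} = \psi = \rvarphi_{T_{0}}$, and Lemma \ref{Lemma: uniqueness} forces $(T^{\prime})^{2} = T_{0}$; uniqueness of the positive square root gives $T^{\prime} = \sqrt{T_{0}} = T$, and then Lemma \ref{Lemma: uniquelydetermined} shows $S^{\prime} = S$, whence $R^{\prime} = R$. The main obstacle is the second step: ensuring that the fibrewise square root $T = \sqrt{T_{0}}$ stays inside $\pi_{\varphi}(\mathcal{A})^{\prime} \cap C^{\ast}_{\mathcal{E}^{\varphi}}(\mathcal{D}^{\varphi})$ and that the operator $S$ it induces is simultaneously a positive contraction, compatible with the quantized subdomain $\mathcal{F}^{\Phi}$, and genuinely intertwines $\rrho_{\Phi}$ in both required directions despite the unboundedness of the operators involved.
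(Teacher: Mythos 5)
Your proof is correct, but it takes a genuinely different route from the paper's. The paper never invokes the square root of the Radon--Nikodym derivative of $\psi$; instead it runs the construction of Theorem~\ref{Theorem: MainTheroem1} in parallel on both levels, building contractive intertwiners $T\colon \mathcal{H}^{\varphi}\to\mathcal{H}^{\psi}$ and $S\colon\mathcal{K}^{\Phi}\to\mathcal{K}^{\Psi}$ between the two minimal Stinespring representations (which requires constructing the $(\psi,\Psi)$-representation, verifying the inclusions $S\rrho_{\Phi}(x)\subseteq\rrho_{\Psi}(x)T$, $\rrho_{\Psi}(x)^{\ast}S\subseteq T\rrho_{\Phi}(x)^{\ast}$ and their adjoint versions, and proving $S\in C^{\ast}_{\mathcal{F}^{\Phi},\mathcal{F}^{\Psi}}(\mathcal{O}^{\Phi},\mathcal{O}^{\Psi})$ via the auxiliary identity $[\rrho_{\Phi}(E)V_{\varphi}(\mathcal{H}_{\ell}^{\perp}\cap\mathcal{D})]=(\mathcal{K}_{\ell}^{\Phi})^{\perp}$), and then sets $R=\vert T\vert\oplus\vert S\vert$, using functional calculus on $S^{\ast}S\rrho_{\Phi}(x)\subseteq\rrho_{\Phi}(x)T^{\ast}T$ to place $R$ in $\rrho_{\Phi}(E)^{\prime}$, with $\psi=\rvarphi_{T^{\ast}T}$ read off from the proof of Theorem~\ref{Theorem: MainTheroem1}. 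You instead exploit the fact that the equivalence $\Psi\sim\Phi_{R}$ depends only on $\psi$: you use Theorem~\ref{Theorem: MainTheroem1} as a black box to get $T_{0}$ with $\psi=\rvarphi_{T_{0}}$, take $T=\sqrt{T_{0}}$ fibrewise, and induce $S$ intrinsically on $\mathcal{K}^{\Phi}$ by $S\rrho_{\Phi}(x)V_{\varphi}h=\rrho_{\Phi}(x)TV_{\varphi}h$, so the second Stinespring representation never appears. What the paper's route buys is the explicit intertwiner between the two dilations (information of independent interest, in the spirit of Arveson), and $R$ emerges from genuinely operator-valued data; what your route buys is economy: your $S$ is by construction a positive contraction on a single quantized domain, so self-adjointness makes the reduction of each $\mathcal{K}_{\ell}^{\Phi}$ (hence membership in $C^{\ast}_{\mathcal{F}^{\Phi}}(\mathcal{O}^{\Phi})$) automatic, the cross-domain orthocomplement checks disappear, and no absolute values are needed since $R$ is positive from the start. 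Your uniqueness argument (via Lemma~\ref{Lemma: uniqueness}, uniqueness of positive square roots, and the second part of Lemma~\ref{Lemma: uniquelydetermined}) is essentially the same as the paper's, which routes the first step through Corollary~\ref{isomorphism} instead.
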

\begin{proof} Given that $\Psi \preceq \Phi$, by the Definition \ref{Definition: orderrelation}, we have $0\leq \psi\leq\varphi$. In  view of Theorem \ref{Theorem: MainTheroem1}, we construct an operator $T: \mathcal{D}^{\varphi}\to \mathcal{D}^{\psi}$
such that
\begin{equation*}
  T\Big(\sum\limits_{i=1}^{n}\pi_{\varphi}(a_{i})V_{\varphi}h_{i}\Big) = \sum\limits_{i=1}^{n}\pi_{\psi}(a_{i})V_{\psi}h_{i} ~~~
 \;\text{for all}\; h_{i}\in \mathcal{D^{\varphi}}, a_{i}\in \mathcal{A}.
\end{equation*}
Let us define a map $S: \mathcal{O}^{\Phi}\to \mathcal{O}^{\Psi}$ by
$$
S\Big(\sum\limits_{i=1}^{n}\rrho_{\Phi}(x_{i})V_{\varphi}h_{i}\Big)=\sum\limits_{i=1}^{n}\rrho_{\Psi}(x_{i})V_{\psi}h_{i}\;\; \text{for all}\;
h_{i}\in \mathcal{D}^{\varphi}, a_{i}\in \mathcal{A}.
$$
Now for $x_{i}\in E,h_{i}\in \mathcal{H}_{\ell}, i \in \{1,2,3,\cdots n\}$ and for fixed $\ell\in \Omega,$ we have
\begin{align}
  \notag  \Big\| S(\sum\limits_{i=1}^{n}\rrho_{\Phi}(x_{i})V_{\varphi}h_{i}) \Big\|^{2}_{\mathcal{K}_{\ell}^{\Psi}}
  \notag  &=\sum\limits_{i, j=1}^{n}\langle \rrho_{\Psi}(x_{i})V_{\psi}h_{i},\; \rrho_{\Psi}(x_{j})V_{\psi}h_{j}\rangle_{\mathcal{K}_{\ell}^{\Psi}}\\
  \notag  &=\sum\limits_{i, j=1}^{n}\langle h_{i},\; V_{\psi}^*\rrho_{\Psi}(x_{i})^*\rrho_{\Psi}(x_{j})V_{\psi}h_{j}\rangle_{\mathcal{H}_{\ell}}\\
  \notag &= \sum\limits_{i, j=1}^{n}\langle h_{i},\;  V_{\psi}^*\pi_{\psi}(\langle x_{i}, x_{j}\rangle )V_{\psi}h_{j}\rangle_{\mathcal{H}_{\ell}}\\
   \notag  &= \sum\limits_{i, j=1}^{n}\langle h_{i},\;  \psi(\langle x_{i}, x_{j}\rangle) h_{j}\rangle_{\mathcal{H}_{\ell}}\\
    &= \Big\langle
    \begin{bmatrix}h_{1}\\
    \vdots \\
    h_{n}
    \end{bmatrix},\;  \Bigg[\psi\big(\langle x_{i}, x_{j}\rangle\big)\Bigg]_{i,j =1}^{n}  \begin{bmatrix}h_{1}\\
    \vdots \\
    h_{n}
    \end{bmatrix}\Big\rangle_{\mathcal{H}_{\ell}^{n}}.
\end{align}
Since $[\langle x_{i}, x_{j}\rangle]\in M_{n}(\mathcal{A})^{+}$ and $\varphi-\psi\in \mathcal{CP}_{loc}(\mathcal{A},
C^{*}_{\mathcal{E}}(\mathcal{D})),$ thus we have
\begin{align*}
    \sum\limits_{i, j=1}^{n}\Big\langle
    \begin{bmatrix}h_{1}\\
    \vdots \\
    h_{n}
    \end{bmatrix},\;  [\psi(\langle x_{i}, x_{j}\rangle)]  \begin{bmatrix}h_{1}\\
    \vdots \\
    h_{n}
    \end{bmatrix}\Big\rangle_{\mathcal{H}_{\ell}^{n}}
    &\leq  \sum\limits_{i, j=1}^{n}\Big\langle
    \begin{bmatrix}h_{1}\\
    \vdots \\
    h_{n}
    \end{bmatrix},\;  [\varphi(\langle x_{i}, x_{j}\rangle)]  \begin{bmatrix}h_{1}\\
    \vdots \\
    h_{n}
    \end{bmatrix}\Big\rangle_{\mathcal{H}_{\ell}^{n}}\\
    &=\Big\| \sum\limits_{i=1}^{n}\rrho_{\Phi}(x_{i})V_{\varphi}h_{i}\Big\|^{2}_{\mathcal{K}_{\ell}^{\Phi}}.
\end{align*}
 Since $\Span\{\rrho_{\Phi}(E)V_{\varphi}\mathcal{H}_{\ell}\}$
  is dense in $\mathcal{K}_{\ell}^{\Phi},$ thus $S|_{\mathcal{K}_{\ell}^{\Phi}}:{\mathcal{K}_{\ell}^{\Phi}}\to {\mathcal{K}_{\ell}^{\Psi}}$ is a contraction for each $\ell\in \Omega.$ Moreover, since $ \bigcup\limits_{\ell \in \Omega}\mathcal{K}_{\ell}^{\Phi}= \mathcal{O}^{\Phi},$ and $\mathcal{O}^{\Phi}$ is dense
  in $\mathcal{K}^{\Phi},$ therefore $S: \mathcal{K}^{\Phi}\to \mathcal{K}^{\Psi}$ is a contraction such that $S(\mathcal{K}_{\ell}^{\Phi})\subseteq \mathcal{K}_{\ell}^{\Psi}$, for every $\ell\in \Omega$.
  We claim that $S\rrho_{\Phi}(x)\subseteq \rrho_{\Psi}(x) T$ and $\rrho_{\Psi}(x)^*S\subseteq T\rrho_{\Phi}(x)^*.$
  Let $\sum\limits_{i=1}^{n}\pi_{\varphi}(a_{i})V_{\varphi}h_{i}\in \Span\{\pi_{\varphi}(\mathcal{A})V_{\varphi}\mathcal{H}_{\ell}\}.$ Then by direct computation we have
 $$S\rrho_{\Phi}(x)\Big(\sum\limits_{i=1}^{n}\pi_{\varphi}(a_{i})V_{\varphi}h_{i}\Big)=\rrho_{\Psi}(x)T\Big(\sum\limits_{i=1}^{n}\pi_{\varphi}(a_{i})V_{\varphi}h_{i}\Big).$$

  Since $[\pi_{\varphi}(\mathcal{A}) V_{\varphi}\mathcal{H}_{\ell}]= \mathcal{H}_{\ell}^{\varphi},$ for every $\ell\in \Omega$
  and $\mathcal{D}^{\varphi}=\bigcup\limits_{\ell \in \Omega}\mathcal{H}_{\ell}^{\varphi},$ we get that
   $   S\rrho_{\Phi}(x)\subseteq \rrho_{\Psi}(x) T.$
In a similar way, if we take $\sum\limits_{i=1}^{m}\rrho_{\Phi}(y_{i})V_{\varphi}g_{i}\in
\Span\{\rrho_{\Phi}(E)V_{\varphi}\mathcal{D}\}$,  we get
$$\rrho_{\Psi}(x)^{*}S\Big(\sum\limits_{i=1}^n\rrho_{\Phi}(y_{i})V_{\varphi}h_{i}\Big)=T\rrho_{\Phi}(x)^{*}\Big(\sum\limits_{i=1}^n\rrho_{\Phi}(y_{i})V_{\varphi}h_{i}\Big).$$
It follows that $\rrho_{\Psi}(x)^*S|_{\mathcal{K}_{\ell}^{\Phi}}= T\rrho_{\Phi}(x)^*|_{\mathcal{K}_{\ell}^{\Phi}}$ since $\Span\{\rrho_{\Phi}(E)V_{\varphi}\mathcal{H}_{\ell}\}$ is dense in $\mathcal{K}_{\ell}^{\Phi}$, for every $\ell\in \Omega$.
 By using the fact $\mathcal{O}^{\Phi}= \bigcup\limits_{\ell\in \Omega}\mathcal{K}_{\ell}^{\Phi}$, we conclude that
 \begin{equation*}
     \rrho_{\Psi}(x)^{\ast}S\subseteq T\rrho_{\Phi}(x)^{\ast}.
 \end{equation*}
Similarly, $T^{*}\rrho_{\Psi}(x)^{*}\subseteq \rrho_{\Phi}(x)^{*}S^{*}$ and  $S^*\rrho_{\Psi}(x)\subseteq \rrho_{\Phi}(x)T^{*}$.
    Now we show that  $[\rrho_{\Phi}(E)V_{\varphi}\big( (\mathcal{H}_{\ell}^{\perp}\cap \mathcal{D})\big)]=(\mathcal{K}_{\ell}^{\Phi})^{\perp}.$   Let us take  
   $h_{i}\in (\mathcal{H_{\ell}})^{\perp}\cap \mathcal{D}, x_{i}\in E.$
   By Lemma \ref{Lemma: minimalityforperp}, we know that $V_{\varphi}h=\pi_{\varphi}(1)V_{\varphi}h\in (\mathcal{H}_{\ell}^{\varphi})^{\perp}\cap\mathcal{D}^{\varphi}.$ It follows that $\sum\limits_{i=1}^{n}\rrho_{\Phi}(x_{i})V_{\varphi}h_{i} \in(\mathcal{K}_{\ell}^{\Phi})^{\perp}$ so that $[\rrho_{\Phi}(E)V_{\varphi}\big( (\mathcal{H}_{\ell}^{\perp}\cap \mathcal{D})\big)]\subseteq (\mathcal{K}_{\ell}^{\Phi})^{\perp}.$

   Let $\xi_{0}\in (\mathcal{K}_{\ell}^{\Phi})^{\perp}\cap [\rrho_{\Phi}(E)V_{\varphi}\big( (\mathcal{H}_{\ell}^{\perp}\cap \mathcal{D})\big)]^{\bot}.$ Then for all $x\in E, h\in \mathcal{H}_{\ell},$ we know that $\rrho_{\Phi}(x)V_{\varphi}h\in\mathcal{K}_{\ell}^{\Phi}$ and so $\langle \xi_{0}, \;\rrho_{\Phi}(x)V_{\varphi}h\rangle_{\mathcal{K}_{\ell}^{\Phi}}=0$. Moreover, for all $\eta\in \mathcal{H}_{\ell}^{\perp}\cap \mathcal{D}$  we see that
  $\langle \xi_{0}, \;\rrho_{\Phi}(x)V_{\varphi}\eta\rangle_{\mathcal{K}_{\ell}^{\Phi}}=0$. Therefore, we have
  $\langle \xi_{0}, \;\rrho_{\Phi}(x)V_{\varphi}\eta \rangle_{\mathcal{K}^{\Phi}}=0$  for all $x\in E$  and $\eta\in \mathcal{D}.$ Since $\Span\{\rrho_{\Phi}(E)V_{\varphi}(\mathcal{D}\}$ is dense in $\mathcal{K}^{\Phi},$ we have $\xi_{0}=0.$ Hence, we have $[\rrho_{\Phi}(E)V_{\varphi}\big( (\mathcal{H}_{\ell}^{\perp}\cap \mathcal{D})\big)]=(\mathcal{K}_{\ell}^{\Phi})^{\perp}.$

 We show that $S\in C^{*}_{\mathcal{F}^{\Phi}, \mathcal{F}^{\Psi}}(\mathcal{O}^{\Phi}, \mathcal{O}^{\Psi}).$ It is immediate from the
   definition, $S|_{\mathcal{K}_{\ell}^{\Phi}}\in \mathcal{B}(\mathcal{K}_{\ell}^{\Phi},\mathcal{K}_{\ell}^{\Psi})$. Let $x_{i}, y_{j} \in E, h_{i}, \in (\mathcal{H}_{\ell})^{\perp}\cap \mathcal{D}$ and $g_{j} \in \mathcal{H}_{\ell}$ for $i \in \{1, 2, 3, \hdots, n\}, \; j \in \{1,2,3, \hdots, m\}$. Then we have
    \begin{align*}
  \Big \langle S\big(\sum\limits_{i=1}^{n}\rrho_{\Phi}
   (x_{i})V_{\varphi}h_{i}\big), \; \sum\limits_{j=1}^{m}\rrho_{\Psi}(y_{j})V_{\psi}g_{j}\Big\rangle_{\mathcal{K}^{\Psi}}
   &=\sum\limits_{i,j=1}^{n,m}\Big \langle \rrho_{\Psi}(x_{i})V_{\psi}h_{i}, \; \rrho_{\Psi}(y_{j})V_{\psi}g_{j}\Big\rangle_{\mathcal{K}^{\Psi}}\\
   &=\sum\limits_{i,j=1}^{n,m}\Big \langle h_{i}, \; V_{\psi}^*\rrho_{\Psi}(x_{i})^*\rrho_{\Psi}(y_{j})V_{\psi}g_{j}\Big\rangle_{\mathcal{H}}\\
&=\sum\limits_{i,j=1}^{n,m}\Big \langle h_{i}, \; V_{\psi}^*\pi_{\psi}\big(\langle x_{i},y_{j}\rangle \big)V_{\psi}g_{j}\Big\rangle_{\mathcal{H}}\\
&=\sum\limits_{i,j=1}^{n,m}\Big \langle h_{i}, \;\psi\big(\langle x_{i},y_{j}\rangle \big )g_{j}\Big\rangle_{\mathcal{H}}.\\
&=0,
\end{align*}
 since $h_{i}\in \mathcal{H}_{\ell}^{\perp}$ and $\psi(\langle x, y_{j} \rangle)g_{j}\in \mathcal{H}_{\ell}$.
   Now, applying the fact that $\Span\{\rrho_{\Psi}(E)V_{\psi}(\mathcal{H}_{\ell})\}$ is dense $K_{\ell}^{\Psi}$, we get
$S\big(\sum\limits_{i=1}^{n}\rrho_{\Phi}(x_{i})V_{\varphi}h_{i}\big)\in(\mathcal{K}_{\ell}^{\Psi})^{\perp}.$
Since $\Span\{\rrho_{\Phi}(E)V_{\varphi}\big((\mathcal{H}_{\ell})^{\perp}\cap \mathcal{D}\big)\}$ is dense in $(\mathcal{K}_{\ell}^{\Phi})^{\perp}$
and $S$ is bounded, we have $S\big((\mathcal{K}_{\ell}^{\Phi})^{\perp}\big)\subseteq (\mathcal{K}_{\ell}^{\Psi})^{\perp}.$
 Hence  $S\in C^{*}_{\mathcal{F}^{\Phi}, \mathcal{F}^{\Psi}}(\mathcal{O}^{\Phi}, \mathcal{O}^{\Psi}).$

 Let us take $R :=\vert T\vert\oplus \vert S\vert$. Clearly, $R \in C^{\ast}_{\mathcal{E}^{\varphi} \oplus \mathcal{F}^{\Phi}}(\mathcal{D}^{\varphi}\oplus \mathcal{O}^{\varphi})$. We claim that $R\in \rrho_{\Phi}(E)^{\prime}$ and $0\leq R \leq 1.$ Firstly, we observe that
 \begin{equation*}
     S^*S \rrho_{\Phi}(x)\subseteq S^*\rrho_{\Phi}(x)T\subseteq \rrho_{\Phi}(x)T^*T.
 \end{equation*}
 Thus by functional calculus, we see that
  \begin{equation*}
      \vert S\vert \rrho_{\Phi}(x)\subseteq \rrho_{\Phi}(x)\vert T\vert \; \text{and}\; \vert T \vert\rrho_{\Phi}^{*}(x)\subseteq \rrho_{\Phi}^{*}(x)\vert S\vert.
  \end{equation*}
  Therefore $R\in \rrho_{\Phi}(E)^{\prime}$. Since $T$ and $S$ are contractive, thus $0\leq \vert T\vert,\vert S\vert\leq I.$
  Therefore, we have $0\leq R \leq I.$ Next, we show that $\Psi\sim \Phi_{R}$. Recall that the map $\Phi_{R}$ is defined by
  \begin{equation*}
      \Phi_{R}(x)=W_{\Phi}^{*}\vert S\vert^{\frac{1}{2}}\rrho_{\Phi}(x)\vert T\vert^{\frac{1}{2}}V_{\varphi}.
  \end{equation*}
  Moreover, notice that $\vert T\vert\oplus \vert S\vert \in \rrho_{\Phi}(E)^{\prime}\cap C^{*}_{\mathcal{E}^{\varphi}\oplus \mathcal{F}^{\Phi}}(\mathcal{D}^{\varphi}\oplus\mathcal{O}^{\Phi})$ and $0\leq \vert T\vert\oplus \vert S\vert \leq 1.$ Thus by Proposition \ref{Prop: phi-map}, $\Phi_{R}$ is $\rvarphi_{T^{*}T}$-map.
  It is clear from the proof of Theorem \ref{Theorem: MainTheroem1}, $\psi=\rvarphi_{T^*T}.$ We conclude that
  \begin{equation}\label{Equation: R}
      \Psi\sim \Phi_{R}.
  \end{equation}
To see uniqueness, suppose that $R'= T'\oplus S'\in \rrho_{\Phi}(E)^{\prime}\cap C^{*}_{\mathcal{E}^{\varphi}\oplus \mathcal{F}^{\Phi}}(\mathcal{D}^{\varphi}\oplus \mathcal{O}^{\Phi})$, $0 \leq R^{\prime} \leq I$ such that
\begin{equation}\label{Equation: R^prime}
    \Psi \sim \Phi _{R'}.
\end{equation}
Then by Equations (\ref{Equation: R}), (\ref{Equation: R^prime}), we get $\Phi_{R} \sim \Phi_{R^{\prime}}$, whence we deduce from Definition \ref{Definition: equivalent} that $\rvarphi_{T^{\ast}T} = \rvarphi_{{T^{\prime}}^{2}}$. Thus $T^{\ast}T = {T^{\prime}}^{2}$ by Corollary \ref{isomorphism}. 
 We conclude that $R = R^{\prime}$ from the second part of Lemma \ref{Lemma: uniquelydetermined}.
\end{proof}
It is worth mentioning that the article \cite{Moslehian et.al} deals with Stinespring's and Radon-Nikodym type representations for completely positive $n\times n$ matrix of linear maps on Hilbert locally $C^{\ast}$-modules by considering the existence of positive elements in locally $C^{\ast}$-algebras. Whereas, Theorem \ref{Theorem: Stinespringforphimaps}, \ref{Theorem: RadonNikodymforHilbertmodules} are different from their setup as we consider the existence of  local positive elements in locally $C^{\ast}$-algebras.

\section*{Acknowledgements}
The first author thanks SERB(Science and Engineering Research Board, India) for funding through J C Bose Fellowship project. The third named author thanks NBHM (National Board for Higher Mathematics, India) for financial support with ref No.0204/66/2017/R\&D-II/15350. Also, both second and third named authors would like to thank the Indian Statistical Institute Bangalore for providing necessary facilities to carry out this work.

We express our sincere thanks to the anonymous reviewer for valuable comments.


\end{document}